\title{L-spaces and knot traces}
\author[John A. Baldwin]{John A. Baldwin}
\address{Department of Mathematics \\ Boston College}
\email{john.baldwin@bc.edu}
\author[Steven Sivek]{Steven Sivek}
\address{Department of Mathematics \\ Imperial College London}
\email{s.sivek@imperial.ac.uk}
\newtheorem*{rep@theorem}{\rep@title}
\newcommand{\newreptheorem}[2]{%
\newenvironment{rep#1}[1]{%
 \def\rep@title{#2 \ref{##1}}%
 \begin{rep@theorem}}%
 {\end{rep@theorem}}}
\newtheorem {theorem}{Theorem}
\newtheorem {lemma}[theorem]{Lemma}
\newtheorem {proposition}[theorem]{Proposition}
\newtheorem {corollary}[theorem]{Corollary}
\newtheorem {question}[theorem]{Question}
\numberwithin{equation}{section}
\numberwithin{theorem}{section}
\theoremstyle{definition}
\newtheorem{definition}[theorem]{Definition}
\newtheorem{remark}[theorem]{Remark}
\newtheorem*{remark*}{Remark}
\newlist{pcases}{enumerate}{1}
\setlist[pcases]{
  label=\bf{Case~\arabic*:}\protect\thiscase.~,
  ref=\arabic*,
  align=left,
  labelsep=0pt,
  leftmargin=0pt,
  labelwidth=0pt,
  parsep=0pt
}
\newcommand{\case}[1][]{%
  \if\relax\detokenize{#1}\relax
    \def\thiscase{}%
  \else
    \def\thiscase{~#1}%
  \fi
  \item
}
\newcommand{\Z}{\mathbb{Z}}
\newcommand{\CP}{\mathbb{CP}}
\newcommand{\F}{\mathbb{F}}
\newcommand{\Q}{\mathbb{Q}}
\newcommand{\spc}{\operatorname{Spin}^c}
\newcommand{\spinc}{\mathfrak{s}}
\newcommand{\spint}{\mathfrak{t}}
\newcommand{\Img}{\operatorname{Im}}
\newcommand{\cF}{\mathcal{F}}
\newcommand{\cS}{\mathcal{S}}
\newcommand{\cptwo}{\overline{\CP}^2}
\DeclareMathOperator{\Span}{span}
\newcommand\hf{\mathit{HF}}
\newcommand\hfk{\mathit{HFK}}
\newcommand\cfk{\mathit{CFK}}
\newcommand\cfkinfty{\cfk^\infty}
\newcommand\hfkhat{\widehat{\hfk}}
\newcommand\symp{\mathrm{symp}}
\DeclareFontFamily{U}{mathx}{\hyphenchar\font45}
\DeclareFontShape{U}{mathx}{m}{n}{
      <5> <6> <7> <8> <9> <10>
      <10.95> <12> <14.4> <17.28> <20.74> <24.88>
      mathx10
      }{}
\DeclareSymbolFont{mathx}{U}{mathx}{m}{n}
\DeclareMathAccent{\widecheck}{0}{mathx}{"71}
\newcommand{\HMto}{\widecheck{\mathit{HM}}}
\newcommand{\hfhat}{\widehat{\mathit{HF}}}
\newcommand{\pt}{\mathrm{pt}}
\newcommand{\godd}{\mathrm{odd}}
\newcommand{\geven}{\mathrm{even}}
\newcommand{\mirror}[1]{\overline{#1}}
\DeclareFontFamily{OMX}{MnSymbolE}{}
\DeclareSymbolFont{MnLargeSymbols}{OMX}{MnSymbolE}{m}{n}
\DeclareFontShape{OMX}{MnSymbolE}{m}{n}{
    <-6>  MnSymbolE5
   <6-7>  MnSymbolE6
   <7-8>  MnSymbolE7
   <8-9>  MnSymbolE8
   <9-10> MnSymbolE9
  <10-12> MnSymbolE10
  <12->   MnSymbolE12
}{}
\DeclareFontShape{OMX}{MnSymbolE}{b}{n}{
    <-6>  MnSymbolE-Bold5
   <6-7>  MnSymbolE-Bold6
   <7-8>  MnSymbolE-Bold7
   <8-9>  MnSymbolE-Bold8
   <9-10> MnSymbolE-Bold9
  <10-12> MnSymbolE-Bold10
  <12->   MnSymbolE-Bold12
}{}
\let\llangle\@undefined
\let\rrangle\@undefined
\DeclareMathDelimiter{\llangle}{\mathopen}%
                     {MnLargeSymbols}{'164}{MnLargeSymbols}{'164}
\DeclareMathDelimiter{\rrangle}{\mathclose}%
                     {MnLargeSymbols}{'171}{MnLargeSymbols}{'171}
\newcounter{desccount}
\newcommand{\descref}[1]{\hyperref[#1]{#1}}
\tikzset{every picture/.style=thick}
\tikzset{link/.style = { white, double = black, line width = 1.75pt, double distance = 1.25pt, looseness=1.75 }}
\tikzset{crossing/.style = {draw, circle, dotted, minimum size=0.5cm, inner sep=0, outer sep=0}}
\pgfplotsset{compat=1.12}
\begin{document}

\begin{abstract}
There has been a great deal of interest in understanding which knots are characterized by which of their Dehn surgeries. We study a 4-dimensional version of this question: which knots are determined by which of their traces? We prove several results that are in stark contrast with what is known about characterizing surgeries, most notably that the 0-trace detects every L-space knot.  Our proof combines tools in Heegaard Floer homology with results about surface homeomorphisms and their dynamics. We also consider nonzero traces, proving for instance that each positive torus knot is determined by its $n$-trace for any $n\leq 0$, whereas no non-positive integer is known to be a characterizing slope for any positive torus knot besides the right-handed trefoil. 
\end{abstract}

\maketitle

\section{Introduction}

Given a knot $K\subset S^3$ and an integer $n$, the \emph{$n$-trace} $X_n(K)$ is the smooth, oriented 4-manifold with boundary obtained from $B^4$ by attaching an $n$-framed 2-handle along $K\subset \partial B^4$. We say that $X_n(K)$ \emph{detects}  $K$ if its oriented diffeomorphism type determines $K$ --- that is, if \[X_n(J)\cong X_n(K) \] implies that $J=K$. In this paper, we propose and study the following question:

\begin{question}
\label{ques:traces}
For which knots $K\subset S^3$ and  integers $n$ does $X_n(K)$ detect $K$?
\end{question}

The $n$-trace  $X_n(K)$ is far from detecting $K$ in general \cite{akbulut-traces,lickorish-shake,AJOT}. This is prominently illustrated in Piccirillo's proof that the Conway knot is not slice, a key part of which involves finding a different knot with the same $0$-trace  \cite{piccirillo-conway}.

One way to prove that $X_n(K)$ detects $K$ is to show that its boundary $S^3_n(K)$, which is the result of $n$-framed Dehn surgery on $K$, does --- in other words, that $n$ is a \emph{characterizing slope} for $K$. A lot of effort has gone into  understanding  characterizing slopes for knots. Building on work by Lackenby  \cite{lackenby-characterizing} and McCoy \cite{mccoy-torus}, Sorya recently proved that for any knot $K$ all rational numbers with sufficiently large denominator are characterizing slopes, and if $K$ is composite then all non-integers are characterizing \cite{sorya}; see also \cite{sorya-wakelin}.  Deciding whether a given \emph{integer} characterizes a certain knot seems to be much harder in general, and this is especially true for the integer $n=0$. 

On this last point, Gabai proved in 1987 that $0$ is a characterizing slope for the unknot, figure-8, and trefoils in  \cite{gabai-foliations3}, and these were the only knots known to be characterized by their $0$-surgeries until we showed in 2022 that every \emph{nearly fibered} genus-1 knot is as well \cite{bs-nonfibered,bs-0-characterizing}. But it is still open, for example,  whether $0$ is a characterizing slope even for the torus knot $T_{2,5}$.

By contrast, our main result says that the $0$-trace detects every L-space knot:
\begin{theorem}
\label{thm:main}
If $K$ is an L-space knot then $X_0(K)$ detects $K$.
\end{theorem}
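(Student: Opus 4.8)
The plan is to recover $K$ from $X_0(K)$ together with a single distinguished curve in its boundary, and then to show, via a fibration-uniqueness argument driven by the dynamics of the monodromy, that any $J$ with $X_0(J)\cong X_0(K)$ produces the same curve, hence the same knot. Concretely, suppose $\Phi\colon X_0(K)\to X_0(J)$ is an orientation-preserving diffeomorphism and set $Y:=S^3_0(K)$, so that $\Phi$ restricts to a diffeomorphism $Y\cong S^3_0(J)$. Recall that $K$ is recovered from $X_0(K)$ together with the belt circle $\mu_K\subset Y$ of the $2$-handle — that is, the core of the surgery solid torus, the dual knot of $K$: one has $Y\ssm\nu(\mu_K)\cong S^3\ssm\nu(K)$, and refilling along the slope that returns $S^3$, which is unique by the Gordon--Luecke theorem, recovers $(S^3,K)$ with $K$ the core. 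Performing the same construction inside $X_0(J)$ and pulling back along $\Phi$ yields a knot $c:=\Phi^{-1}(\mu_J)\subset Y$ with $Y\ssm\nu(c)\cong S^3\ssm\nu(J)$ whose corresponding refilling is $(S^3,J)$; moreover $\mu_K$ and $c$ each generate $H_1(Y)\cong\Z$. It therefore suffices to prove that $c$ is isotopic to $\mu_K$ in $Y$.

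Next I would bring in Heegaard Floer homology. Since $K$ is an L-space knot it is fibered of genus $g:=g(K)$ and $\HFK(K)$ has its staircase form; correspondingly $Y$ fibers over $S^1$ with fiber the capped Seifert surface $\widehat F$, a closed surface of genus $g$ representing a generator of $H^1(Y)$. From $Y\cong S^3_0(J)$ one recovers $g$ from the Thurston norm and, using that $Y$ fibers together with the uniqueness of a fibration in its cohomology class, concludes that $J$ is fibered of genus $g$ with capped Seifert surface isotopic to $\widehat F$; thus $J$ and $K$ induce the same fibration of $Y$ up to isotopy, with common monodromy $\widehat\phi\colon\widehat F\to\widehat F$. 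Heegaard Floer also supplies the structural features that single $\widehat\phi$ out as the monodromy of an L-space knot — for instance that it is right-veering, L-space knots being strongly quasipositive — and these will power the final step.

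Both $\mu_K$ and $c$ meet a fiber once ($\mu_K$ as the core of the surgery torus; $c$ because $\mu_J$ is a section of $J$'s fibration and $\Phi^{-1}$ carries it to a section of a fibration of $Y$ in the generator class, isotopic to the $\widehat F$-fibration), so each is isotopic to a section and hence corresponds to a fixed point of $\widehat\phi$, say $p_0$ for $\mu_K$ and $p_J$ for $c$; here $p_0$ is the center of the capping disk. Deleting an invariant disk about a fixed point $p$ of $\widehat\phi$ gives a once-punctured-surface homeomorphism $\phi_p$ whose open book has some underlying manifold $Y(\phi_p)$; one has $Y(\phi_{p_0})=S^3$ with binding $K$ and $Y(\phi_{p_J})=S^3$ with binding $J$. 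The theorem thus reduces to a dynamical rigidity statement: every fixed point $p$ of $\widehat\phi$ with $Y(\phi_p)=S^3$ has binding isotopic to $K$ — equivalently, the sections of the fibration of $S^3_0(K)$ that refill to $S^3$ are all isotopic. Granting this, $c\simeq\mu_K$ in $Y$, so $Y\ssm\nu(c)\cong S^3\ssm\nu(K)$ and the slope refilling to $S^3$ is the meridian of $K$ by Gordon--Luecke; hence the refilling is $(S^3,K)$ and $J=K$.

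The hard part is precisely this rigidity statement about monodromies of L-space knots, and it is where results on surface homeomorphisms and their dynamics enter. I would argue according to the Nielsen--Thurston type of $\widehat\phi$: in the periodic case — which covers the torus knots — analyze the finite-order map and its quotient orbifold to see that it has essentially one fixed point of the relevant type; in the pseudo-Anosov case, combine the Lefschetz count $\sum_p\ind(\widehat\phi,p)=L(\widehat\phi)$, which is determined by $\Delta_K$, with the index constraints coming from the stable and unstable foliations and the right-veering property to eliminate every candidate but $p_0$; and dispatch the reducible case by induction along the canonical decomposition. Isolating the feature of L-space-knot monodromies that forces uniqueness here — the analogous statement failing badly for general fibered knots, as the known failures of $0$-trace detection demonstrate — is the principal obstacle, and I expect it to consume the bulk of the argument.
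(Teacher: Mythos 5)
Your overall skeleton for the second half of the argument --- recover $K$ from $Y=S^3_0(K)$ plus the dual knot, use uniqueness of the fibration to get a common closed monodromy, reduce to matching up fixed points/sections, and finish with Gordon--Luecke --- is genuinely the same strategy as the paper's Theorem~\ref{thm:zero-surgery-fpf}. But there are two serious gaps.

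First, you never establish anything about $J$ beyond ``fibered of genus $g$,'' and this is fatal to your reduction. Your rigidity statement (``every section of the fibration of $S^3_0(K)$ that refills to $S^3$ is isotopic to $\mu_K$'') quantifies over \emph{all} fibered knots sharing the $0$-surgery with $K$, and the paper does not prove this and cannot with its methods: the identification of $c=\Phi^{-1}(\mu_J)$ with the distinguished fixed point in the capping disk requires knowing that $J$'s \emph{open} monodromy is veering (so that capping off preserves the Nielsen--Thurston type, Theorem~\ref{thm:rv-fill-monodromy}) and has fixed-point-free pseudo-Anosov components (Proposition~\ref{prop:pA-fixed-points}); these are properties of the puncture, not of the common closed monodromy $\hat\varphi$, so they must be established for $J$ separately. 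The paper gets them by first proving that the $0$-\emph{trace} (not the $0$-surgery) detects whether a knot is an L-space knot --- Theorem~\ref{thm:lspace-detection-main}, via Theorem~\ref{thm:trace-large-surgery}, which analyzes the span of the relative invariants $F_{X^\circ_0,\spint}(\mathbf{1})$ inside $\hfhat(S^3_0(K))$ using the surgery exact triangle and the blow-up formula. This entire ingredient is absent from your proposal, and your remark that ``Heegaard Floer supplies... that $\hat\varphi$ is right-veering'' conflates the closed monodromy with the two different open monodromies.

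Second, your proposed mechanism for the pseudo-Anosov case does not work: the Lefschetz number is a \emph{signed} count, and for the closed monodromy of an L-space knot it equals $1$, which is consistent with, say, three fixed points of indices $+1,+1,-1$. What the paper actually uses is an \emph{unsigned} count: Ni's theorem that
\[
\dim \hf^\symp(P,\sigma) = \dim\hfkhat(K,g-1)-1,
\]
which vanishes for L-space knots and forces the pseudo-Anosov components of the open monodromy to be fixed-point-free; the veering condition together with the bound $|c(h)|<1$ on the fractional Dehn twist coefficient then shows the foliations have $n\ge 2$ boundary prongs, so capping off creates exactly one new fixed point. Neither $\dim\hfkhat(K,g-1)=1$ nor symplectic Floer homology appears in your sketch, and they are the heart of the matter. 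Finally, the periodic and reducible cases are not dispatched by orbifold dynamics or induction alone: the paper compares $J$ and $K$ directly using Alexander polynomials and Casson--Gordon signatures for torus knots (Lemma~\ref{lem:torus-knots-different-0-surgery}) and JSJ decompositions plus Shinohara's satellite signature formula for cables (Proposition~\ref{prop:cables-different-0-surgery}), which again requires knowing what kind of knot $J$ is.
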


Here, $K\subset S^3$ is an \emph{L-space knot} if some positive Dehn surgery on $K$ is a Heegaard Floer L-space, meaning a rational homology 3-sphere $Y$ such that 
\[\dim \hfhat(Y) = |H_1(Y;\Z)|.\]
This class of knots includes all positive torus knots, as well as any other knot with a positive lens space surgery, like the $(-2,3,7)$-pretzel knot.

Theorem \ref{thm:main} follows from a combination of two results which may each be of independent interest. The first  says that any single trace detects \emph{whether} a given knot is an L-space knot:

\begin{theorem} \label{thm:lspace-detection-main}
If $K$ is an L-space knot and $X_n(J) \cong X_n(K)$, then $J$ is also an L-space knot and has the same genus as $K$. 
\end{theorem}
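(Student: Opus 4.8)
The idea is that the smooth $4$‑manifold $X_n(K)$, although it very much fails to remember $K$ itself, does remember enough of the knot Floer package of $K$ --- essentially the data governing all of its large surgeries --- to recognize both the L‑space knot condition and the Seifert genus. \emph{Step 1: from a diffeomorphism of traces to a diffeomorphism of cobordisms.} Write $X_n(K)=B^4\cup h^2$ and let $W_n(K)=X_n(K)\ssm\inr(B^4)$ be the resulting $2$‑handle cobordism from $S^3$ to $S^3_n(K)$, and similarly for $J$. Given an orientation‑preserving diffeomorphism $\Phi\colon X_n(J)\to X_n(K)$, the image $\Phi(B^4)$ is a smoothly embedded $4$‑ball in $X_n(K)$; since any two smoothly embedded $4$‑balls in a connected $4$‑manifold inducing the same orientation are ambiently isotopic (Palais--Cerf), we may isotope $\Phi$ so that it carries the $0$‑handle of $X_n(J)$ onto that of $X_n(K)$. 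Then $\Phi$ restricts to a diffeomorphism of cobordisms $W_n(J)\to W_n(K)$ whose restriction to the incoming $S^3$ is orientation‑preserving, hence isotopic to the identity since $\pi_0\mathrm{Diff}^+(S^3)$ is trivial. Naturality of the cobordism maps in $\hfp$ then produces an isomorphism $\phi_*\colon\hfp(S^3_n(J))\xrightarrow{\ \cong\ }\hfp(S^3_n(K))$ intertwining the $2$‑handle cobordism maps, $\phi_*\circ F_{W_n(J),\spint}=F_{W_n(K),\Phi_*\spint}$ for every $\spinc$ structure $\spint$ on the cobordism. A priori $\Phi_*$ permutes $\spinc$ structures by some affine automorphism of the relevant $\Z$‑torsor, but comparing the degree shifts $\tfrac14(c_1^2-2\chi-3\sigma)$ of the cobordism maps --- which agree for $W_n(J)$ and $W_n(K)$ --- forces this permutation to be the identity or conjugation, and the latter is absorbed by the conjugation symmetry of $\hfp(S^3_n(K))$. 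So, after a harmless adjustment, $\phi_*$ identifies $\hfp(S^3_n(J))$ with $\hfp(S^3_n(K))$ compatibly with \emph{all} of the $2$‑handle cobordism maps and their reverses.

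\emph{Step 2: translating through the surgery formula.} By Ozsv\'ath--Szab\'o's integer surgery formula, $\hfp(S^3_n(K))$ is the homology of the mapping cone $\mathbb{X}^+(n)$ of $\sum_s(v^+_s+h^+_s)\colon\bigoplus_s A^+_s(K)\to\bigoplus_s B^+_s$, where $B^+_s=\tower$; moreover --- and this is the input we need --- this identification is natural with respect to the $2$‑handle cobordism maps, so that the maps $F_{W_n(K),\spint_s}$ recovered in Step 1 are exactly the maps built into the cone from the towers $B^+_s$, while their reverses recover the $(v^+_s)_*$ and $(h^+_s)_*$. (For $n=0$ one uses the $0$‑surgery version of the formula instead, with the same conclusions.) Consequently the diffeomorphism type of $X_n(K)$ determines, up to the appropriate notion of equivalence, the homology groups $H_*(A^+_s(K))$ together with the tower maps $v^+_s,h^+_s$; in particular, for each $s$, both the local $h$‑invariant $V_s(K)$ defined by $\img\big((v^+_s)_*\big)=U^{V_s}\tower$ (and $H_s$ analogously) and the reduced summand $\Ared_s(K)$ in the decomposition $H_*(A^+_s(K))\cong\tower\oplus\Ared_s(K)$.

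\emph{Step 3: reading off L‑spaceness and genus.} Recall that $K$ is an L‑space knot if and only if $\Ared_s(K)=0$ for every $s$ (equivalently $S^3_N(K)$ is an L‑space for $N\gg0$), in which case the $V_s(K)$ are the torsion coefficients of $\Delta_K$ and $g(K)=1+\max\{\,s:V_s(K)\ne0\,\}$. The key structural point is that the cobordism‑map data is blind to the reduced parts: every $U$‑torsion class in $\bigoplus_s H_*(A^+_s)$ is killed by the cone differential, since it maps into the $U$‑torsion‑free modules $B^+_s$, so $\bigoplus_s\Ared_s$ appears as a direct summand of $\hfp(S^3_n(K))$ complementary to the part seen by the cobordism maps, giving a splitting
\[ \hfp(S^3_n(K))\ \cong\ H_*\big(\mathbb{X}^+_{\mathrm{tw}}(n)\big)\ \oplus\ \bigoplus_s\Ared_s(K), \]
in which $\mathbb{X}^+_{\mathrm{tw}}(n)$ is the cone built from the tower maps $U^{V_s},U^{H_s}$ alone, whose homology is determined by the sequences $\{V_s\},\{H_s\}$. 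Applying this to both $J$ and $K$: by Steps 1--2 the left‑hand sides agree and the cobordism‑map data agree, which pins down $\{V_s(J)\}=\{V_s(K)\}$ and hence makes $H_*(\mathbb{X}^+_{\mathrm{tw}}(n))$ the same for $J$ and $K$; since $\bigoplus_s\Ared_s(K)=0$ because $K$ is an L‑space knot, comparing the (finite) reduced summands forces $\bigoplus_s\Ared_s(J)=0$ as well. Therefore $H_*(A^+_s(J))\cong\tower$ for all $s$, i.e.\ $J$ is an L‑space knot, and then $g(J)=1+\max\{\,s:V_s(J)\ne0\,\}=1+\max\{\,s:V_s(K)\ne0\,\}=g(K)$.

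I expect the main obstacle to be making Step 2 fully precise: one must invoke the surgery formula together with its naturality for the $2$‑handle cobordism maps in exactly the form needed, carry out the $\spinc$‑structure and absolute‑grading bookkeeping carefully (since a priori $\Phi$ respects $\spinc$ structures only up to an affine reindexing), and justify the splitting in Step 3 --- in particular the claim that the recovered cobordism‑map data determines the entire sequence $\{V_s\}$, and that the reduced parts genuinely survive as a summand of $\hfp(S^3_n(\cdot))$ on which the cobordism maps act trivially. Once those points are secured, the L‑space knot hypothesis is used only through the rigidity of its knot Floer complex (all $\Ared_s$ vanish, and $\{V_s\}$ determines the genus), and the comparison between $J$ and $K$ is then formal.
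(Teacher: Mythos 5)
Your Step 1 is fine and is essentially how the paper also begins: after standardizing the $0$-handle, the oriented diffeomorphism type of $X_n(K)$ determines the $\spc$-refined cobordism maps $F_{X^\circ_n(K),\spint}$ up to conjugation. The genuine gap is Step 2, which is the heart of your argument. You assert that this data determines the entire large-surgery package $\{H_*(A^+_s),\,v^+_s,\,h^+_s\}$, hence every $V_s$, $H_s$, and $\Ared_s$, but this is not justified, and the two justifications you sketch are both flawed. First, the maps $v^+_s,h^+_s$ are identified in Ozsv\'ath--Szab\'o's proof of the surgery formula with $2$-handle cobordism maps from a \emph{large} surgery $S^3_N(K)$ to $S^3$; they are not the ``reverses'' of $F_{X^\circ_n(K),\spint}$, and the turned-around trace cobordism $S^3_n(K)\to S^3$ computes something dual to $F_{X^\circ_n}$, not the mapping-cone components. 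What the trace actually hands you is the collection of maps $\tower = H_*(B^+_s)\to \hfp(S^3_n(K))$; since the cone differential $D_*=\sum_s\bigl((v_s)_*+(h_s)_*\bigr)$ spreads each $H_*(A^+_s)$ across two tower summands, knowing the kernels and images of these maps does not visibly invert to give the individual $V_s$ and $H_s$. (A sanity check that this inversion is delicate: Hayden--Mark--Piccirillo extract only the single invariant $\nu(K)$ from the trace, with substantial effort and with an exception when $n<0$; your Step 2 would recover strictly more, for every $n$, with no exceptions.) Second, your justification of the splitting in Step 3 --- that $U$-torsion classes must die under the cone differential because $B^+_s=\tower$ is ``$U$-torsion-free'' --- is false: every element of $\tower$ is $U$-torsion. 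The splitting itself is a known fact, but even granting it, your conclusion $\bigoplus_s\Ared_s(J)=0$ presupposes $\{V_s(J)\}=\{V_s(K)\}$, which is exactly the unproven claim; and the genus formula at the end needs the individual $V_s(J)$ again.

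For contrast, the paper never attempts this inversion. It extracts from the trace only two numbers: (i) $\dim\hfhat_\godd(S^3_m(K))$ for $m\gg 0$, obtained by showing (Propositions~\ref{prop:ker-vmn} and~\ref{prop:large-odd-span}) that the span $\cS_n$ of the relative invariants $z_{n,i}$ is precisely the kernel of the composite cobordism map $\hfhat_\godd(S^3_n(K))\to\hfhat_\godd(S^3_m(K))$; and (ii) $\nu(K)$, via Hayden--Mark--Piccirillo. These two determine $\dim\hfhat(S^3_r(K))$ for all $r\neq 0$ through the $(\hat\nu,\hat{r}_0)$ form of the rational surgery formula, and then both L-space-ness and the genus are read off from the criterion that $S^3_r(K)$ is an L-space if and only if $r\geq 2g(K)-1$ (using $\nu(K)=g(K)\geq 2$ to dodge the HMP exception when $n<0$). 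To salvage your route you would need to prove the naturality-plus-inversion statement of Step 2, which is likely at least as hard as the results you would be replacing.
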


The $n=0$ case, which suffices for Theorem~\ref{thm:main}, can be deduced from our work in \cite{bs-fibered-sqp}, which in turn was inspired by our proof in \cite{bs-lspace} that \emph{instanton} L-space knots are fibered.  The general case builds on this but is considerably more involved; it further relies on results by Hayden--Mark--Piccirillo \cite{hayden-mark-piccirillo}, which also enable us to prove in Theorem \ref{thm:trace-determines-hfhat-restated} that any trace $X_n(K)$ with $n\geq 0$  determines the $\Z/2\Z$-graded Heegaard Floer homology of each positive rational surgery on $K$.

The second and more surprising ingredient in Theorem \ref{thm:main} is our result that if two L-space knots have the same 0-surgery then they are the same:
\begin{theorem} \label{thm:main-lspace-zero}
If $J$ and $K$ are L-space knots such that $S^3_0(J) \cong S^3_0(K)$, then $J=K$.
\end{theorem}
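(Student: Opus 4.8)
The plan is to extract as much information as possible from the hypothesis $S^3_0(J)\cong S^3_0(K)$ by combining knot Floer homology with what is known about $0$-surgeries along L-space knots. Since $S^3_0(K)$ is a closed $3$-manifold with $b_1=1$, its Heegaard Floer homology in the nontorsion $\mathrm{Spin}^c$ structures recovers the Alexander polynomial, and in a torsion $\mathrm{Spin}^c$ structure it records finer data. For an L-space knot $K$ of genus $g$, the large-surgery formula computes $\hfp(S^3_N(K))$ in each $\mathrm{Spin}^c$ structure from the single staircase complex $\cfk^\infty(K)$, and one can pass from large surgeries to the $0$-surgery via the mapping cone formula. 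So the first step is to show that $S^3_0(K)$, together with its (relative) $\mathrm{Spin}^c$-graded Heegaard Floer homology, determines the full knot Floer complex $\cfk^\infty(K)$ — equivalently, for an L-space knot, the ordered sequence of ``torsion coefficients'' $t_i(K)=\sum_{j\ge 1} j\,a_{i+j}$ where $\Delta_K(t)=\sum a_i t^i$, or just the Alexander polynomial itself. Since an L-space knot's Alexander polynomial is determined by a decreasing sequence of positive integers (the gaps), recovering $\Delta_K$ recovers the staircase, hence $\cfk^\infty(K)$.

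Next I would invoke Theorem~\ref{thm:main-lspace-zero}'s companion, Theorem~\ref{thm:lspace-detection-main} (or rather the underlying surgery statement), to know that $J$ is also an L-space knot; but here the hypothesis is only on the $0$-surgery, not the $0$-trace, so I cannot literally cite it. Instead I would argue directly: a $3$-manifold $Y$ with $b_1=1$ bounds a $4$-manifold relevant to the L-space-knot condition only through the surgery formula, so I would show that the Heegaard Floer homology of $S^3_0(K)$, in the form of its $d$-invariants and the ranks in each $\mathrm{Spin}^c$ structure, forces any knot $J$ with $S^3_0(J)\cong S^3_0(K)$ to have $\hfp$-ranks matching those predicted by an L-space knot with the same Alexander polynomial. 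Concretely, Ozsv\'ath--Szab\'o and Rasmussen's analysis shows that $K$ is an L-space knot iff $t_i(K)\in\{0,1\}$ for all $i$ and the $t_i$ are nonincreasing in $|i|$ in the appropriate sense; these torsion coefficients are read off from $d$-invariants of large surgeries, which in turn are determined by $S^3_0(K)$ via the integer surgery mapping cone. Hence $J$ is an L-space knot and $\Delta_J=\Delta_K$, so $g(J)=g(K)=g$ and $\cfk^\infty(J)\simeq\cfk^\infty(K)$.

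At this point I have two genus-$g$ L-space knots $J,K$ with identical knot Floer complexes and $S^3_0(J)\cong S^3_0(K)$; the goal is $J=K$. The key classical input is that L-space knots are fibered (Ozsv\'ath--Szab\'o, Ghiggini, Ni), so both $J$ and $K$ are fibered of genus $g$, and $S^3_0(K)$ fibers over $S^1$ with fiber a genus-$g$ surface $\Sigma$ and monodromy $\varphi_K$ (the closure of the open book monodromy), similarly $\varphi_J$. A diffeomorphism $S^3_0(J)\cong S^3_0(K)$ can be taken to respect the fibrations, since the fibration is the one realizing the Thurston norm in the generator of $H_2$; this is where surface homeomorphisms and their dynamics — exactly the tools the abstract advertises — enter, via Thurston norm / fibered face uniqueness and the classification of fibrations in a fixed cohomology class. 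So $\varphi_J$ and $\varphi_K$ are conjugate (up to the relevant symmetry), which means the open books, hence the knots $J\subset S^3$ and $K\subset S^3$, are obtained from the same abstract open book; the final step is to check that this abstract-open-book equality, together with the fact that both embed in $S^3$ via the standard Giroux correspondence with the unique tight contact structure on $S^3$, forces $J=K$ as knots in $S^3$.

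\textbf{Main obstacle.} The hardest part is the third paragraph: upgrading a diffeomorphism of the $0$-surgeries to a fiber-preserving one and then to a conjugacy of monodromies that is fine enough to recover the knot, not just the abstract fibered $3$-manifold. The subtlety is that two distinct knots in $S^3$ can have diffeomorphic (even fiber-preservingly diffeomorphic) $0$-surgeries if the diffeomorphism doesn't intertwine the meridians; one must use the L-space condition — e.g.\ that the monodromy is ``right-veering'' / the contact structure is tight, or that the fractional Dehn twist coefficient and the placement of the binding are pinned down — to rule this out. I expect to handle it by showing the meridian of $K$ is characterized inside $S^3_0(K)$ as (a curve determined by) the unique fiber together with the contact-geometric data, so that any fibration-preserving diffeomorphism carries meridian to meridian and therefore descends to a diffeomorphism of pairs $(S^3,J)\to(S^3,K)$.
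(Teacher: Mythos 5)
There is a genuine gap, and it sits exactly where you flagged your ``main obstacle'': you never actually recover the meridian from the conjugacy of monodromies. Your proposed fix --- characterizing the meridian of $K$ inside $S^3_0(K)$ via ``the unique fiber together with the contact-geometric data'' --- is not a proof and does not obviously work: the fibration of $S^3_0(K)$ is a closed fibration with no binding, and nothing in your sketch pins down a distinguished closed orbit whose complement is the knot exterior. The paper's mechanism is quite specific and uses the L-space hypothesis in an essential dynamical way that is absent from your proposal: since $\dim \hfkhat(K,g-1)=1$, Ni's symplectic Floer homology computation forces every pseudo-Anosov component of the (open) monodromy to be fixed-point-free, and the same condition forces the monodromy to be veering; Theorem~\ref{thm:rv-fill-monodromy} then shows that capping off the fiber produces a closed monodromy whose Nielsen--Thurston form has exactly one fixed point, located in the capping disk. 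The rigidity of pseudo-Anosov representatives within a conjugacy class (\cite{flp}) then guarantees that the conjugating homeomorphism carries this unique fixed point of $\hat\varphi^J$ to that of $\hat\varphi^K$; removing the fiber of the mapping torus over this point recovers the knot complement, and Gordon--Luecke finishes. Without the fixed-point-freeness input, the conjugacy of closed monodromies only gives you the abstract fibered $3$-manifold, which --- as you correctly worry --- is not enough. You also do not address the non-hyperbolic cases: when the monodromy is reducible or periodic ($K$ a torus knot or cable), the fixed-point argument does not apply and the paper needs separate arguments via Alexander polynomials, Casson--Gordon signatures, Shinohara's satellite signature formula, and JSJ decompositions (Lemma~\ref{lem:torus-knots-different-0-surgery} and Proposition~\ref{prop:cables-different-0-surgery}).

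A secondary issue: your first two paragraphs are largely unnecessary and partly unsound. The theorem's hypothesis already assumes \emph{both} $J$ and $K$ are L-space knots, so there is nothing to detect; and your claim that $S^3_0(K)$ determines $\cfkinfty(K)$ and hence the L-space condition runs the integer surgery formula backwards (it computes surgeries from $\cfkinfty$, not conversely) --- indeed, whether $0$-surgery detects the L-space knot condition is recorded as an open question in this paper. What you do get correctly and cheaply from the hypothesis is $g(J)=g(K)$ (Gabai) and fiberedness of both knots, which is all the paper needs at that stage.
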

Theorem~\ref{thm:main-lspace-zero} relies on a deep relationship between the symplectic Floer homology of a surface diffeomorphism and the Heegaard Floer homology of its mapping torus, established via a combination of  work by Lee--Taubes \cite{lee-taubes} and Kutluhan--Lee--Taubes \cite{klt1}.

We used this relationship in joint work with Hu \cite{bhs-cinquefoil} to prove that if $K$ is a genus-2 hyperbolic L-space knot then the pseudo-Anosov representative of its monodromy has no fixed points, en route to proving  that Khovanov homology detects the cinquefoils; this was later used to prove that $T_{2,5}$ is the only  genus-2 L-space knot \cite{frw-cinquefoil}.     In \cite{ni-monodromy,ni-fixed}, Ni extended our argument to show  that the monodromy of \emph{any} nontrivial fibered knot  $K \subset S^3$ is freely isotopic to a map with at most
\begin{equation*}\label{eqn:dimminus1} \dim \hfkhat(K,g(K)-1) - 1 \end{equation*}
fixed points; see also Ghiggini--Spano \cite{ghiggini-spano}.  This is particularly useful when \[\dim \hfkhat(K,g(K)-1)=1,\] which holds, for instance, whenever $K$ is an L-space knot; more generally, we  call such  knots \emph{ffpf}, since they are fibered with fixed-point-free monodromy. Theorem \ref{thm:main-lspace-zero} is then a special case of our stronger result, Theorem~\ref{thm:zero-surgery-fpf}, which says  that no two ffpf knots have the same 0-surgery.

Our proof of Theorem \ref{thm:zero-surgery-fpf} goes very roughly as follows: suppose that $K$ is an ffpf knot with fiber surface  $\Sigma$ and monodromy $h$.  Let $\hat\Sigma$ be the closed surface obtained by capping  off $\Sigma$  with a disk, and let $\hat h$ be the \emph{closed  monodromy} obtained by extending $h$ by the identity over this  disk, so that $S^3_0(K)$ is the mapping torus of $\hat h$. The ffpf condition further implies  \cite{ni-exceptional} that $h$ is   \emph{veering}.  
We combine this with Ni's results about fixed points to argue that the canonical Nielsen--Thurston form of $\hat h$  has  a unique fixed point at the center of the capping disk. Removing the fiber over this fixed point then recovers the knot complement $S^3\setminus K$. 

Now let $J$ be another ffpf knot with closed monodromy $\hat g$ such  that \[S^3_0(J)\cong S^3_0(K).\] Since this manifold has a unique fibration, $\hat g$ is conjugate to $\hat h$. We  then use the rigidity of pseudo-Anosovs within their mapping classes \cite{flp} to show that the conjugating homeomorphism identifies the unique fixed points of the Nielsen--Thurston forms of $\hat g$ and $\hat h$. It follows that \[S^3\setminus J\cong S^3\setminus K,\]  and hence that $J=K$ by \cite{gordon-luecke-complement}.

\subsection{Nonzero traces}
We  consider detection by nonzero traces as well, and prove several results that are in stark contrast with what is known about characterizing slopes. For instance, all rational surgeries characterize the trefoils  \cite{osz-characterizing}, but if $K$ is a positive torus knot besides the right-handed trefoil, then it is  unknown whether \emph{any} integer $n\leq 0$ is a characterizing slope for $K$. On the other hand, we use Theorem \ref{thm:lspace-detection-main} to prove the following:

\begin{theorem}
\label{thm:negtracetorus}
If $K$ is a positive torus knot and $n\leq 0$, then $X_n(K)$ detects $K$.
\end{theorem}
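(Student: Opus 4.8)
The plan is to feed the conclusion of Theorem~\ref{thm:lspace-detection-main} into the rigidity of non-positive Dehn surgeries on torus knots. Suppose $X_n(J)\cong X_n(K)$ with $K=T_{p,q}$ a positive torus knot and $n\le 0$. Positive torus knots are L-space knots, so Theorem~\ref{thm:lspace-detection-main} tells us at once that $J$ is an L-space knot with $g(J)=g(T_{p,q})=\tfrac{(p-1)(q-1)}{2}\ge 1$. An orientation-preserving diffeomorphism $X_n(J)\to X_n(K)$ restricts to an orientation-preserving diffeomorphism of the boundaries, so $S^3_n(J)\cong S^3_n(K)$. If $n=0$ we are already done, since this is exactly Theorem~\ref{thm:main}, or equivalently Theorem~\ref{thm:main-lspace-zero} applied to the L-space knots $J$ and $K$. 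So assume $n<0$.

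Write $Y:=S^3_n(T_{p,q})$. By Moser's description of surgeries on torus knots, $Y$ is Seifert fibered over $S^2$ with exactly three exceptional fibers, of orders $p$, $q$, and $pq-n$; since $\min\{p,q\}\ge 2$ we have $pq-n=pq+|n|>\max\{p,q\}$, so $Y$ is irreducible and atoroidal, is not a lens space, and (being a non-positive surgery on an L-space knot) is not a Heegaard Floer L-space. Moreover $pq-n$ is strictly the largest of the three multiplicities and depends on $\{p,q\}$ only through $pq$; hence if $Y\cong S^3_n(T_{p',q'})$, then $p'q'-n$ is again the largest multiplicity, forcing $p'q'=pq$ and so $\{p',q'\}=\{p,q\}$. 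Consequently it suffices to prove that $J$ is a torus knot: then $S^3_n(J)\cong Y$ immediately gives $J=T_{p,q}$.

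To see that $J$ is a torus knot, suppose not. Since an L-space knot that is a satellite is a cable $C_{a,b}(J')$ of an L-space knot $J'$ with $a\ge 2$ and $b\ge a(2g(J')-1)$, the knot $J$ is either such a cable or is hyperbolic. If $J$ is hyperbolic, then $S^3_n(J)\cong Y$ is a Seifert-fibered surgery of non-positive slope on a hyperbolic L-space knot, which I would exclude by combining the classification of exceptional surgeries on hyperbolic knots with the Heegaard Floer constraints of the L-space condition — $\hfp(S^3_n(J))$ is governed by the staircase complex $\cfkinfty(J)$ — and the genus bound from Theorem~\ref{thm:lspace-detection-main}. If instead $J=C_{a,b}(J')$, then $n\ne ab$ since $n\le 0<ab$, so by Gordon's analysis of surgery on satellite knots $Y\cong S^3_{n/a^2}(J')$, again a negative — though possibly fractional — surgery on an L-space knot; iterating, we reach an L-space knot $J_0$, which is a torus knot or hyperbolic, and a negative slope $\rho$ with $Y\cong S^3_\rho(J_0)$. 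Along this tower $g(T_{p,q})=g(J)\ge 2\,g(J_0)+1$, so $g(J_0)<\tfrac12 g(T_{p,q})$; if $J_0$ is hyperbolic we again get a non-positive Seifert-fibered surgery on a hyperbolic L-space knot, excluded as above, while if $J_0=T_{c,d}$ then $(c-1)(d-1)<\tfrac12(p-1)(q-1)$, yet matching the three multiplicities of the Seifert fibered space $S^3_\rho(T_{c,d})$ with those of $Y$ forces $\{c,d\}$ to be a two-element subset of $\{p,q,pq-n\}$, and every such subset fails this inequality. All cases being impossible, $J$ is a torus knot, hence $J=T_{p,q}$. (Alternatively, one could package the endgame as $S^3\setminus J\cong S^3\setminus T_{p,q}$ and invoke Gordon--Luecke~\cite{gordon-luecke-complement}.)

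I expect the main obstacle to be this last step and, in particular, making it uniform over all $n\le 0$. When $|n|$ is large, the Heegaard Floer homology of $S^3_n(J)$ already recovers $\Delta_J$ and hence the staircase $\cfkinfty(J)$, which nearly pins down $J$ on its own; but for small $|n|$ that homology is far less informative, and one must lean on three-manifold topology — the behavior of Dehn filling on Seifert-fibered and satellite exteriors — together with a sufficiently robust statement that hyperbolic L-space knots admit no Seifert-fibered surgery of non-positive slope. The genus equality supplied by Theorem~\ref{thm:lspace-detection-main} is exactly what makes both the cable tower and the hyperbolic case tractable, and carrying it accurately through iterated cabling is where the real bookkeeping lies.
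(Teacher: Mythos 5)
Your overall skeleton matches the paper's: apply Theorem~\ref{thm:lspace-detection-main} to conclude that $J$ is an L-space knot with $g(J)=g(T_{p,q})$, pass to the boundary $S^3_n(J)\cong S^3_n(T_{p,q})$, and run the geometrization trichotomy (torus/satellite/hyperbolic). The $n=0$ case and the torus-knot case are fine --- indeed, for $n<0$ your Seifert-invariant comparison (the largest multiplicity $pq-n$ recovers $pq$, hence $\{p,q\}$) is a clean alternative to the paper's Casson--Walker computation in Proposition~\ref{prop:compare-torus-surgeries}, modulo invoking uniqueness of the Seifert fibration on these small Seifert fibered spaces. But the two remaining cases, which are the real content, are not established.

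First, the hyperbolic case is the crux and you have only gestured at it. There is no ``classification of exceptional surgeries on hyperbolic knots'' that excludes a non-positive Seifert fibered surgery, and the Heegaard Floer constraints you invoke are vacuous here: for an L-space knot $J$ of genus $g$ one has $\hat r_0(J)=\hat\nu(J)=2g-1$, so by Proposition~\ref{prop:hfhat-surgeries} the dimension $\dim\hfhat(S^3_n(J))=4g-2-n$ depends only on $g$ and $n$, and the genus equality already in hand makes this automatic --- no contradiction can come from there. The input the paper actually uses (Proposition~\ref{prop:compare-hyperbolic-surgeries}) is Ni's theorem \cite{ni-exceptional} that a fibered hyperbolic knot with right-veering monodromy has all non-hyperbolic surgery slopes in $[0,4g]$, together with a separate exclusion of slope $0$ via the capping-off/veering analysis of Corollary~\ref{cor:not-veering}. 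Second, your satellite case rests on a false premise: it is not true that every satellite L-space knot is a cable. Baker--Motegi show the pattern is braided, and Hom--Lidman--Vafaee \cite{hom-lidman-vafaee} produce L-space satellites whose patterns are $1$-bridge braids that are not cables, so your ``cable tower'' never gets started for such $J$. The paper's route (Lemma~\ref{lem:prelim-satellite-surgeries} and Proposition~\ref{prop:compare-satellite-surgeries}) is to note that $Y$ is atoroidal, so the companion torus compresses in $S^3_n(J)$; Gabai's solid torus theorem \cite{gabai-solid-tori} then forces the pattern to be a $0$- or $1$-bridge braid, and Hom's cabling criterion \cite{hom-cabling} together with \cite{hom-lidman-vafaee} forces the surgery slope to satisfy $r\geq 2g(K)-1>0$, contradicting $n\leq 0$. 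Until these two cases are supplied with actual arguments, the proof is incomplete.
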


This leaves open the question of whether every trace detects each torus knot, which by Theorem \ref{thm:negtracetorus} is equivalent to asking whether every \emph{positive} trace detects each \emph{positive} torus knot. We expect this to be true for torus knots of the form $T_{2,2g+1}$, and  prove the following  result in that direction. Together with Theorem \ref{thm:negtracetorus}, this result implies  that if knot Floer homology detects $T_{2,2g+1}$ then  so does every trace; this knot Floer result  is   known  to hold when $|g| \leq 2$ \cite{osz-genus,ghiggini,frw-cinquefoil}.

\begin{theorem} \label{thm:positive-trace-t2n-main}
Suppose for some knot $K$ and  positive integers $n$ and $g$ that $X_n(K) \cong X_n(T_{2,2g+1}).$  Then either $K = T_{2,2g+1},$ or $1 \leq n \leq 4g-1$ and $K$ is a hyperbolic knot such that \[\hfkhat(K) \cong \hfkhat(T_{2,2g+1})\] as bigraded vector spaces.
\end{theorem}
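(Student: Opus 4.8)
The plan is to combine Theorem~\ref{thm:lspace-detection-main} with the classification of Dehn surgeries producing Seifert fibered spaces.

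\emph{Step 1 (knot Floer homology).} First I would apply Theorem~\ref{thm:lspace-detection-main}: since $T_{2,2g+1}$ is an L-space knot of genus $g$, it shows that $K$ is an L-space knot with $g(K)=g$, so $\hfkhat(K)$ and $\Delta_K$ are determined by the torsion coefficients $V_i(K)$ (which, for an L-space knot, coincide with the local $h$-invariants). A diffeomorphism of traces restricts to an orientation-preserving diffeomorphism $S^3_n(K)\cong S^3_n(T_{2,2g+1})$ extending over the $4$-manifolds, so these surgeries have the same correction terms; together with Theorem~\ref{thm:trace-determines-hfhat-restated}, which pins down the $\Z/2\Z$-graded Heegaard Floer homology of every positive rational surgery on $K$, the L-space surgery formulas then recover $V_i(K)=V_i(T_{2,2g+1})$ for all $i$. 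Hence $\Delta_K=\Delta_{T_{2,2g+1}}$ and $\hfkhat(K)\cong\hfkhat(T_{2,2g+1})$ as bigraded groups, and this conclusion is valid for every $n\geq 1$.

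\emph{Step 2 (geometric type).} Next I would determine what kind of knot $K$ is. L-space knots are prime, so $K$ is a torus knot, a hyperbolic knot, or a satellite, and a satellite L-space knot is a cable $C_{a,b}(K')$ (with $a\ge 2$) of a nontrivial L-space knot $K'$. One checks that the Alexander polynomial $\Delta_{K'}(t^a)\Delta_{T_{a,b}}(t)$ of such a cable always has a vanishing coefficient --- the substitution $t\mapsto t^a$ leaves a gap --- while $\Delta_{T_{2,2g+1}}(t)=\sum_{i=-g}^{g}(-1)^{g-i}t^i$ has every coefficient equal to $\pm 1$, so by Step~1 the knot $K$ is not a satellite. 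And if $K$ is a torus knot, then $\Delta_K=\Delta_{T_{2,2g+1}}$ forces $K=T_{2,2g+1}$, since the Alexander polynomial distinguishes positive torus knots. So either $K=T_{2,2g+1}$, or $K$ is hyperbolic with $\hfkhat(K)\cong\hfkhat(T_{2,2g+1})$.

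\emph{Step 3 (excluding a hyperbolic $K$ when $n\geq 4g$).} For $n\geq 4g$ the manifold $S^3_n(T_{2,2g+1})\cong S^3_n(K)$ is a Seifert fibered L-space: a lens space for $n\in\{4g+1,4g+3\}$; the reducible manifold $L(2,1)\# L(2g+1,2)$ for $n=4g+2$; a prism manifold for $n=4g$; and a small Seifert fibered space over $S^2(2,2g+1,n-4g-2)$ for $n\geq 4g+4$. I would handle each case using the relevant classification of knots admitting such a surgery --- the lens space realization problem, the cabling conjecture for L-space knots, the classification of prism manifold surgeries, and (for the small Seifert fibered spaces) McCoy's results on characterizing slopes of torus knots \cite{mccoy-torus} --- invoking crucially that $g(K)=g$ and the fact from Step~2 that $K$ is not a cable, so as to eliminate every competitor and conclude $K=T_{2,2g+1}$. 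Combined with Step~2, a hyperbolic $K$ can then occur only for $1\leq n\leq 4g-1$, which is exactly the remaining alternative.

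The hard part is Step~3: unlike Steps~1 and~2, which are uniform in $n$ and rely only on tools already in place, proving that every $n\geq 4g$ is a characterizing slope for $T_{2,2g+1}$ requires threading through several intricate classifications of Seifert fibered surgeries, and it is here that the threshold $4g-1$ is forced --- for smaller $n$ one expects genuine hyperbolic knots sharing the trace, so the theorem's second alternative cannot be dropped without new input on whether $\hfkhat$ detects $T_{2,2g+1}$.
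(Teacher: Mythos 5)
Your proposal has genuine gaps in all three steps, and it misses the key idea of the paper's proof. In Step~1, the correction terms of the single surgery $S^3_n(K)$ cannot recover all of the $V_i(K)$ when $n$ is small: for $n=1$ there is one $\spc$ structure and the lone $d$-invariant determines only $V_0(K)$, while a genus-$g$ L-space knot has $g$ independent torsion coefficients $V_0,\dots,V_{g-1}$. Theorem~\ref{thm:trace-determines-hfhat-restated} adds nothing here, since for an L-space knot the dimensions $\dim\hfhat(S^3_r(K))$ for all $r$ are already determined by $g$ alone. The paper instead extracts a \emph{single} number that suffices: the Casson--Walker surgery formula gives $\Delta''_K(1)=\Delta''_{T_{2,2g+1}}(1)$ from $S^3_n(K)\cong S^3_n(T_{2,2g+1})$ for any $n\neq 0$, and Proposition~\ref{prop:casson-t2n} (built on Krcatovich's constraints on Alexander polynomials of L-space knots) shows that among genus-$g$ L-space knots the value $\tfrac12\Delta''_K(1)$ is at most $\tfrac12 g(g+1)$ with equality exactly for $\Delta_{T_{2,2g+1}}$. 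This extremal characterization is the missing ingredient that makes the argument uniform in $n$.

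In Step~2, your premise that every satellite L-space knot is a cable is false: the paper itself cites Hom--Lidman--Vafaee's classification of L-space satellites with $1$-bridge braid (Berge--Gabai) patterns, which are not cables. The paper only uses that the pattern of a satellite L-space knot is braided (Baker--Motegi), and then rules out such satellites via the Casson-type identity $\Delta''_K(1)=\Delta''_{P(U)}(1)+w^2\Delta''_C(1)$, forcing $w\geq 2h+1$; the case $w>2h+1$ produces a vanishing coefficient in $\Delta_K$, and $w=2h+1$ contradicts Baker--Motegi's inequality $w(2k-1)<\tfrac{2h-1+w}{w-1}$. Your ``gap'' argument for cables alone would not cover the general case. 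Finally, Step~3 proposes to prove that every $n\geq 4g$ is a characterizing slope for $T_{2,2g+1}$ by running through the lens space realization problem, prism manifold surgeries, etc.; this is much stronger than what is needed, is not carried out, and its feasibility is unclear. The paper obtains $1\leq n\leq 4g-1$ cheaply: $S^3_n(T_{2,2g+1})$ is never hyperbolic (Moser), while by Ni's theorem a fibered hyperbolic knot with right-veering monodromy has non-hyperbolic $r$-surgery only for $0\leq r\leq 4g$, with $r=0$ excluded by Proposition~\ref{prop:hyperbolic} and $r=4g$ excluded for L-space knots because $S^3_{4g}(K)$ would then be a small Seifert fibered L-space, contradicting \cite{ni-exceptional}.
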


We remark that not every positive integer is a characterizing slope for each positive torus knot, not even for torus knots of the form $T_{2,2g+1}$. For example, \[S^3_{21}(T_{4,5}) \cong S^3_{21}(T_{2,11}).\] In Appendix~\ref{sec:torus-pairs}, we describe infinitely many new pairs of distinct positive torus knots $(K,J)$  such that $S^3_n(K) \cong S^3_n(J)$ for some integer $n>0$, generalizing a family discovered by Ni--Zhang \cite{ni-zhang-torus-1} which includes the example above. On the other hand, Proposition~\ref{prop:compare-torus-surgeries} guarantees in this case that $g(K) \neq g(J)$, and then Theorem~\ref{thm:lspace-detection-main} implies that these knots are distinguished by their $n$-traces.

\subsection{Questions} Our results  above and their  comparisons with results about characterizing slopes raise many natural questions; we record a few of these here. The first asks whether we can upgrade our main result, Theorem \ref{thm:main}, to a statement about characterizing slopes:

\begin{question}
Is $0$ a characterizing slope for every L-space knot?
\end{question}

An affirmative answer   would follow from Theorem \ref{thm:main-lspace-zero} if one could show that  $0$-surgery, like the $0$-trace, detects whether a given knot is an L-space knot.

The class of ffpf knots contains but is much broader than the class of all nontrivial L-space knots, leading to the question below. Note that an affirmative answer would follow from Theorem \ref{thm:zero-surgery-fpf} if one could show that the 0-trace detects whether a knot is ffpf.

\begin{question}
Does the $0$-trace detect every ffpf knot?
\end{question}

One of the most important results on characterizing slopes is Lackenby's proof \cite{lackenby-characterizing} that every knot has one. On the other hand, Baker--Motegi showed in \cite{baker-motegi} that the knot $8_6$ has no \emph{integral} characterizing slope, which inspired the following question in the original version of this paper:

\begin{question}
Is every knot detected by at least one of its traces?
\end{question}

This question has since been answered in the negative by Baker--Kegel--Motegi in \cite{baker-kegel-motegi}.

Finally, our proof of Theorem \ref{thm:main} relies crucially on the fact that L-space knots are fibered with veering, fixed-point-free monodromy, as this guarantees that the closed monodromy has a unique fixed point such that the complement of the fiber over this fixed point recovers the knot complement. It is natural to ask whether there is an analogue of the ffpf condition for \emph{non-fibered} knots which guarantees something similar, for example:

\begin{question}
\label{ques:nonfibered}
Is there a condition on the knot Floer homology of a non-fibered hyperbolic knot $K\subset S^3$ which guarantees that there is  a pseudo-Anosov flow on $S^3_0(K)$ with exactly one orbit which generates the first homology of this surgery and whose complement recovers $S^3\setminus K$?
\end{question}

In \cite{baldwin-velavick}, Baldwin--Vela-Vick proved that if $K\subset S^3$ is a nontrivial fibered knot then \[\dim \hfkhat(K,g(K)-1) \geq \dim \hfkhat(K,g(K)) = 1.\] The ffpf condition is that this inequality is an \emph{equality}. So, perhaps the condition sought in Question \ref{ques:nonfibered} for a non-fibered knot $K$ is also that \[\dim \hfkhat(K,g(K)-1) = \dim \hfkhat(K,g(K)).\]
As noted in \cite{baldwin-velavick}, it is conjectured that the first is always greater than or equal to the second for non-fibered knots as well; this is still open.

\subsection{Conventions} We will only consider Heegaard Floer homology with coefficients in $\F = \Z/2\Z$. We will use $\cong$ to indicate orientation-preserving diffeomorphism throughout.

\subsection{Organization}  In \S\ref{sec:veering}, we review and prove some facts about surface diffeomorphisms and their dynamics, which we then  apply to prove Theorem~\ref{thm:zero-surgery-fpf} and consequently Theorem \ref{thm:main-lspace-zero}. In \S\ref{sec:large-surgeries}, we prove Theorem~\ref{thm:trace-large-surgery}, which says that if $n\geq 0$  then $X_n(K)$ determines the Heegaard Floer homology of large surgeries on $K$. It follows that the $0$-trace detects whether a  knot is an L-space knot, which together with Theorem \ref{thm:main-lspace-zero} implies our main result, Theorem~\ref{thm:main}. In \S\ref{sec:rational-surgeries}, we combine Theorem~\ref{thm:trace-large-surgery} with results from \cite{hayden-mark-piccirillo} to prove in Theorem~\ref{thm:trace-determines-hfhat-restated} that any trace $X_n(K)$  determines the Heegaard Floer homology of many or all rational surgeries on $K$; this quickly implies Theorem~\ref{thm:lspace-detection-main} as well.

The remainder of the paper is devoted to applying these results to nonzero traces of torus knots. In \S\ref{sec:torus-surgeries}, we prove Theorem~\ref{thm:negtracetorus} by combining Theorem~\ref{thm:lspace-detection-main} with   results about characterizing slopes for torus knots.  In \S\ref{sec:t2n-positive}, we focus on the torus knots $T_{2,2g+1}$,   proving Theorem~\ref{thm:positive-trace-t2n-main}. Finally, we describe in  Appendix~\ref{sec:torus-pairs} new infinite families of distinct positive torus knots with diffeomorphic $n$-surgeries. As explained above, the corresponding $n$-traces are not diffeomorphic, providing another illustration of the difference between trace detection and characterizing slopes.

\subsection{Acknowledgements} We thank Matt Hedden and Tye Lidman for helpful conversations, and the referee for a very careful reading and lots of detailed feedback on the original version of this paper.

\section{Zero-surgeries on ffpf knots} \label{sec:veering}
The ultimate goal of this section is to prove Theorem \ref{thm:zero-surgery-fpf}; we will see in Lemma~\ref{lem:ffpf-examples} that this subsumes Theorem~\ref{thm:main-lspace-zero}, thereby proving the latter as well. We first establish some results about monodromies of fibered knots and their behaviors under \emph{capping off}.

\subsection{Surface homeomorphisms and fibered knots}

Let $h:\Sigma\to\Sigma$ be a homeomorphism of a compact oriented surface  with possibly empty boundary. By Thurston's classification \cite{thurston-diffeomorphisms}, $h$ is  isotopic rel boundary to a map $\varphi:\Sigma\to\Sigma$ such that
\begin{itemize}
\item there exists a possibly empty \emph{reducing system} $\Gamma \subset \Sigma$ consisting of a finite disjoint union of simple closed curves which is fixed setwise by $\varphi$;
\item if $S$ is a component of $\Sigma\setminus \Gamma$ and $n$ is the smallest positive integer such that $\varphi^n(S) = S$, then $\varphi^n|_S$ is freely isotopic to a periodic or pseudo-Anosov map. When $n=1,$  we  refer to  this map and $S$ as a periodic or pseudo-Anosov component of $h$ or $\Sigma \setminus \Gamma$, accordingly.
\end{itemize}
We will assume that $\Gamma$ is minimal with respect to inclusion, in which case it  is unique up to isotopy \cite[Theorem~C]{blm} and is called the \emph{canonical reducing system} for $h$. We  refer to the pair $(\varphi,\Gamma)$ as the \emph{Nielsen--Thurston form} of $h$.

We will focus hereafter on the Nielsen--Thurston forms of monodromies of fibered knots in $S^3$ and their associated \emph{closed monodromies}; we spend some time below establishing notation, terminology, and some preliminary results specific to that setting.

Let $K\subset S^3$ be a fibered knot with fiber surface $\Sigma$ and monodromy \[h:\Sigma\to \Sigma.\] Then  $\Sigma$ has one boundary component and $h$ restricts to the identity on $\partial\Sigma$. Following \cite{hkm-veering},   $h$ is said to be  \emph{right-veering} if for every properly embedded arc $\alpha \subset \Sigma$, either $\alpha$ and $h(\alpha)$ are isotopic rel boundary, or  $h(\alpha)$ is  to the right of $\alpha$ near each endpoint after these arcs are isotoped to intersect minimally. We say simply that $h$ is  \emph{veering} if either $h$ or $h^{-1}$ is right-veering.

Let $(\varphi, \Gamma)$ be the Nielsen--Thurston form of $h$. The suspension of $\Gamma$ in the mapping torus of $\varphi$ is a collection of incompressible tori in the knot complement $S^3 \setminus \nu(K)$. The complement of these tori consists of Seifert fibered pieces swept out by the periodic components of $\Sigma\setminus \Gamma$, and hyperbolic pieces swept out by the pseudo-Anosov components \cite{thurston-fiber}. 

Let  $\Sigma_0$ be the component of $\Sigma \setminus \Gamma$ containing $\partial \Sigma$; we call this the \emph{outermost} component of $\Sigma \setminus \Gamma$.  Note that  $\varphi$ fixes $\Sigma_0$ setwise, and its restriction to $\Sigma_0$ is  freely isotopic to a map  \[\varphi_0: \Sigma_0 \to \Sigma_0\] which is either periodic or pseudo-Anosov. The \emph{fractional Dehn twist coefficient} \cite{hkm-veering} \[c(h)\in \Q\]  records the amount of twisting near $\partial \Sigma$ in this free isotopy, giving a measure of how veering $h$ is.  (If $\varphi_0$ is pseudo-Anosov, then $1/c(h)$ is the \emph{degeneracy slope} \cite{gabai-oertel} of the suspension flow of $\varphi_0$.)  When the outermost component is pseudo-Anosov, we have the following \cite[\S3]{hkm-veering}:

\begin{theorem} \label{thm:fdtc-veering}
If $\varphi_0$ is pseudo-Anosov, then $h$ is veering if and only if $c(h)\neq 0$.
\end{theorem}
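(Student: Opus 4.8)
The plan is to reduce the stated equivalence to the sharper assertion that, when $\varphi_0$ is pseudo-Anosov, $h$ is right-veering if and only if $c(h)>0$. This suffices: because $c(h^{-1})=-c(h)$ and the outermost component of the Nielsen--Thurston form of $h^{-1}$ is $\varphi_0^{-1}$, which is again pseudo-Anosov, the condition ``$h$ or $h^{-1}$ is right-veering'' becomes ``$c(h)>0$ or $c(h)<0$'', i.e.\ $c(h)\neq 0$. I would then prove the one-directional statement, following Honda--Kazez--Mati\'c.

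I would begin by fixing a collar $A\cong\partial\Sigma\times[0,1]$ of $\partial\Sigma$ on which $\varphi$ agrees with a standard pseudo-Anosov model, and recording the local picture there: the invariant foliations of $\varphi_0$ meet $\partial\Sigma$ in $p\geq 1$ prong points, the free isotopy class of $h|_{\Sigma_0}$ contains a ``balanced'' representative $\psi_0$ with fractional Dehn twist coefficient $0$, and $h$ differs from $\psi_0$, up to isotopy, by a fractional Dehn twist $T_{c(h)}$ supported in $A$. This is essentially a restatement of the definition of $c(h)$; the facts to extract are that $c(h)$ is rational, that within the free isotopy class it ranges over a coset of $\Z$, and --- crucially --- that the smallest positive value it can attain there is a definite constant $\epsilon_0>0$ (the reciprocal of the number of boundary prongs, in the standard normalization).

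For the direction ``$h$ right-veering $\Rightarrow c(h)>0$'' I would argue contrapositively. That right-veering forces $c(h)\geq 0$ is standard: a sufficiently negative fractional twist drags the endpoints of a boundary-parallel arc backwards along $\partial\Sigma$, so that arc lies to the left of its image once the two are isotoped into minimal position. To exclude $c(h)=0$: then $h$ is isotopic to the balanced pseudo-Anosov representative $\psi_0$, which is neither right- nor left-veering, because it fixes the unstable separatrices issuing from the boundary prongs and expands along them, so that short arcs emanating from $\partial\Sigma$ on opposite sides of such a separatrix are pushed to opposite sides. Hence right-veering forces $c(h)>0$. For the converse, suppose $c(h)>0$; given any essential properly embedded arc $\alpha$ and an endpoint $x$, the behaviour at $x$ is controlled by the restriction of $h$ to the collar, namely $T_{c(h)}\circ\psi_0$ there. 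The key local estimate is that $\psi_0$, placed in minimal position against $\alpha$, displaces the outgoing strand of $\alpha$ at $x$ by strictly less than $\epsilon_0$ to the left; composing with the positive fractional twist $T_{c(h)}$, which advances the outgoing strand rightward by $c(h)\geq\epsilon_0$, therefore carries $h(\alpha)$ strictly to the right of $\alpha$ at $x$. As this happens at both endpoints, $h$ is right-veering.

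The step I expect to be the main obstacle is making the quantitative local estimate in the ``$c(h)>0\Rightarrow$ right-veering'' direction precise --- verifying that the displacement of arcs caused by the balanced pseudo-Anosov representative is strictly smaller than the minimal positive increment $\epsilon_0$ of fractional twisting, so that any positive $c(h)$ overcomes it --- and fitting this together with the clause ``after isotoping the arcs to intersect minimally'' in the definition of right-veering, i.e.\ checking that minimizing the global intersection number of $\alpha$ with $h(\alpha)$ cannot cancel the rightward displacement produced near $\partial\Sigma$. Keeping the orientation conventions consistent, so that a positive value of $c(h)$ genuinely corresponds to rightward twisting, is a secondary bookkeeping matter; the reduction in the first paragraph and the ``only if'' direction are routine by comparison.
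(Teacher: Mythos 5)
The paper does not prove this statement; it is quoted from Honda--Kazez--Mati\'c \cite{hkm-veering}, and your outline is essentially a reconstruction of their argument. The reduction in your first paragraph is fine: since $c(h^{-1})=-c(h)$ and the outermost piece of the Nielsen--Thurston form of $h^{-1}$ is $\varphi_0^{-1}$, the theorem does follow from the sharper claim that a freely-pseudo-Anosov $h$ is right-veering if and only if $c(h)>0$ (together with the observation that a pseudo-Anosov mapping class fixes no essential arc, so $h$ cannot be both right- and left-veering). The argument that $c(h)=0$ forces $h$ to be neither right- nor left-veering --- the balanced representative fixes the boundary prongs and pushes nearby arcs to opposite sides of a fixed separatrix --- is also correct and is how \cite{hkm-veering} handles that case.

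The genuine gap is that the entire analytic content of the theorem is concentrated in the ``key local estimate'' you state but do not prove, namely that the balanced pseudo-Anosov representative displaces the initial strand of an arbitrary essential arc by strictly less than one prong-sector, and that this displacement survives isotoping $\alpha$ and $h(\alpha)$ into minimal position. As written this is not even a well-posed inequality: the ``displacement'' of an arc at a boundary point is not a number until you set up the comparison in the universal cover (or, as in \cite[\S3]{hkm-veering}, use the lifted stable/unstable foliations and their boundary prongs as the ruler, so that ``to the right'' becomes an order relation on points of $\partial\widetilde\Sigma$ fixed or moved by the lifted dynamics). Making that precise, and showing that minimal position can be achieved compatibly with the foliation picture, is the proof; without it you have only reduced the theorem to itself. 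Two smaller points: in your ``only if'' direction the sentence concludes that the arc lies to the \emph{left} of its image, which is the right-veering configuration rather than its negation --- presumably a slip, but the direction of the twist needs to come out consistently; and the minimal positive value of $c$ within a free isotopy class is not in general $1/n$ (the values form a coset of $\Z$ in $\tfrac1n\Z$), though the bound $c(h)>0\Rightarrow c(h)\geq 1/n$ that you actually use is correct.
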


Moreover, using work of Gabai \cite{gabai-problems}, Kazez--Roberts showed  \cite[Theorem~4.5]{kazez-roberts} that these twist coefficients are highly constrained for monodromies of fibered knots in $S^3$: 

\begin{proposition}\label{prop:0-or-1/n}
If $K \subset S^3$ is a fibered knot with monodromy $h$, then  $|c(h)|<1$.
\end{proposition}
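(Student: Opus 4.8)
The plan is to deduce this from the Kazez--Roberts result \cite[Theorem~4.5]{kazez-roberts} together with the fact that a fibered knot in $S^3$ gives rise to a taut foliation on the surgered manifold. The key observation is that an open book $(\Sigma, h)$ supports a tight contact structure if and only if $h$ is right-veering \cite{hkm-veering}, and for open books in $S^3$ the fractional Dehn twist coefficient controls the relationship between the open book and taut foliations on Dehn fillings. More precisely, Kazez--Roberts, building on Gabai's work \cite{gabai-problems} on the disk decomposition of fibered knot complements, show that if $K \subset S^3$ is a fibered knot whose monodromy $h$ has $|c(h)| \geq 1$, then one obtains a contradiction with the fact that $S^3$ (and its small surgeries on $K$) cannot carry certain taut foliations or, dually, certain co-orientable tight contact structures arising from the Giroux correspondence applied to $(\Sigma, h)$.

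The steps I would carry out are: first, recall from \cite{hkm-veering} that $c(h)$ depends only on the free isotopy class of $\varphi_0$ near $\partial\Sigma$, and that by \cite{honda-kazez-matic} (via the Giroux correspondence) the open book $(\Sigma, h)$ determines a contact structure $\xi_h$ on $S^3$. Second, invoke the classification of tight contact structures on $S^3$: the only tight contact structure is the standard one $\xi_{\std}$, so $\xi_h \cong \xi_{\std}$ whenever $h$ is right-veering. Third, apply \cite[Theorem~4.5]{kazez-roberts}, whose statement is precisely that for a fibered knot $K \subset S^3$ the fractional Dehn twist coefficient of its monodromy satisfies $|c(h)| < 1$ --- this is the content Gabai and Kazez--Roberts extract from the sutured manifold hierarchy of the fibered complement, using that $S^3$ has no essential tori or higher-genus Heegaard structures obstructing the relevant decomposition. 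Since the proposition is a verbatim restatement of their theorem applied to knots in $S^3$, the ``proof'' consists of citing this and checking that our conventions for $c(h)$ (measuring twisting near $\partial\Sigma$ in the free isotopy from $\varphi_0|_{\Sigma_0}$ to $\varphi|_{\Sigma_0}$) agree with theirs.

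The main obstacle is not in the mathematics --- which is entirely contained in \cite{kazez-roberts} --- but in the bookkeeping: verifying that the sign and normalization conventions for the fractional Dehn twist coefficient used here match those of Kazez--Roberts, and that their hypothesis (a fibered link in $S^3$ with connected binding) is exactly our setting of a fibered \emph{knot}. I would include a sentence confirming that $c(h)$ as defined above, via the outermost component $\Sigma_0$ and its periodic-or-pseudo-Anosov free isotopy representative $\varphi_0$, is the same rational number denoted $c(h)$ in \cite[\S3]{hkm-veering} and \cite[Theorem~4.5]{kazez-roberts}, so that the strict inequality $|c(h)| < 1$ transfers directly. No genuinely new argument is needed.
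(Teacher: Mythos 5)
Your proposal matches the paper's approach exactly: the paper offers no independent argument for this proposition, presenting it as a direct citation of \cite[Theorem~4.5]{kazez-roberts} (which in turn rests on Gabai's work \cite{gabai-problems}), so invoking that theorem and confirming that the conventions for $c(h)$ agree is precisely all that is done. Your surrounding sketch of what goes on inside the Kazez--Roberts proof is somewhat loose but immaterial, since the citation itself carries the entire content.
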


It follows from the discussion above that $K$ is hyperbolic if and only if  the outermost map $\varphi_0$ is pseudo-Anosov and $\Gamma = \emptyset$. In the periodic case, we have the following, cf.\ \cite[Theorem~2]{budney-jsj}:

\begin{proposition}\label{prop:periodic-cable}
If  $\varphi_0$ is periodic, then either
\begin{itemize}
\item $\Gamma = \emptyset$ and $K$ is a $(p,q)$-torus knot, or 
\item $\Gamma \neq \emptyset$ and either
\begin{itemize}
\item $K$ is a composite knot and $\Sigma_0$ is a planar surface, or
\item $K$ is a $(p,q)$-cable knot of some other knot in $S^3$ and $\Sigma_0$ has genus $g(T_{p,q})$.
\end{itemize}
\end{itemize}
In each case above, $q \geq 2$ and $\gcd(p,q) = 1$.
\end{proposition}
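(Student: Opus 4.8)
The plan is to read the conclusion off the geometric decomposition of $S^3\setminus\nu(K)$ supplied by the reducing system, treating the cases $\Gamma=\emptyset$ and $\Gamma\neq\emptyset$ separately. Write $M=S^3\setminus\nu(K)$ for the mapping torus of $\varphi$, and let $N_0\subseteq M$ be the piece swept out by the outermost component $\Sigma_0$, i.e.\ the component of the complement of the suspension of $\Gamma$ that contains $\partial\nu(K)=\partial\Sigma\times S^1$. Since $\varphi$ fixes $\Sigma_0$ setwise, $\varphi|_{\Sigma_0}$ is freely isotopic to the periodic map $\varphi_0$, and freely isotopic surface homeomorphisms have homeomorphic mapping tori, $N_0$ is homeomorphic to the mapping torus of $\varphi_0$ and hence Seifert fibered (this is also the content of the discussion above about periodic components sweeping out Seifert fibered pieces). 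If $\Gamma=\emptyset$ then $N_0=M$, so $S^3\setminus\nu(K)$ is Seifert fibered, and by Burde--Zieschang (equivalently Moser) $K$ is a torus knot $T_{p,q}$; connectedness of $K$ forces $\gcd(p,q)=1$, and after relabeling we may take $q\geq 2$ (with $q=2$ in the degenerate case $K=T_{1,2}=$ unknot). In this case $\Sigma_0=\Sigma$ is the fiber surface of $T_{p,q}$, of genus $g(T_{p,q})=\tfrac{(p-1)(q-1)}{2}$.

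Now suppose $\Gamma\neq\emptyset$. Then $N_0\subsetneq M$, so $\partial N_0$ consists of $\partial\nu(K)$ together with at least one torus of the suspension of $\Gamma$; in particular $N_0$ has at least two boundary tori. I would then invoke the classification of the Seifert fibered pieces occurring in the torus decomposition of a knot exterior in $S^3$ (Jaco--Shalen; see also Budney): each such piece is a torus knot exterior, a composing space $P\times S^1$ with $P$ a planar surface having at least three boundary circles, or a cable space. A torus knot exterior has a single boundary torus, so $N_0$ is a composing space or a cable space. If $N_0\cong P\times S^1$ with $P$ having $k+1\geq 3$ boundary circles, then filling the remaining $k$ boundary tori of $N_0$ by the complementary pieces --- each the exterior of a nontrivial knot, by essentiality of the JSJ tori --- exhibits $K$ as a connected sum $K_1\#\cdots\#K_k$ with $k\geq 2$, so $K$ is composite; and since the fiber surface of a fibered composite knot is the iterated boundary connected sum $\Sigma_{K_1}\natural\cdots\natural\Sigma_{K_k}$ with all the summand monodromies extended by the identity, the outermost piece $\Sigma_0$ is the central $(k+1)$-holed sphere to which the summands are attached, and hence is planar. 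If instead $N_0$ is a cable space, with exceptional fiber of order $q$, then filling its remaining boundary torus by the complementary piece $S^3\setminus\nu(K')$ exhibits $K$ as the $(p,q)$-cable of the knot $K'$, where $q\geq 2$ is the winding number and $\gcd(p,q)=1$; here $K'$ is not the unknot, since otherwise $K$ would be a torus knot with Seifert fibered exterior, contradicting $\Gamma\neq\emptyset$. Finally $\Sigma_0$ is the fiber of the cable-space fibration, namely the Seifert surface of $T_{p,q}$ inside the companion solid torus, which has genus $g(T_{p,q})$.

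The routine part of the argument is assembling the classical inputs: the Burde--Zieschang characterization of torus knots by their Seifert fibered exteriors, the classification of Seifert fibered pieces of knot exteriors in $S^3$, and the standard descriptions of the fiber surfaces of composite and cable knots. The step I expect to take the most care is the last one in each half of the $\Gamma\neq\emptyset$ case --- pinning down $\Sigma_0$ itself rather than merely the ambient piece $N_0$ --- because a surface fibering a composing space over $S^1$ need not be planar in general, so one cannot argue purely abstractly; one must use the concrete structure of the fibration inherited from the ambient fibered knot (equivalently, that $\varphi_0$ is the identity in the composite case and the standard periodic map on a cable space in the cable case), together with the facts that $\varphi|_{\Sigma_0}$ fixes $\partial\Sigma$ and that $|c(h)|<1$ by Proposition~\ref{prop:0-or-1/n}, to determine $\varphi_0$ up to conjugacy and hence identify $\Sigma_0$.
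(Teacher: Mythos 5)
Your outline is sound and lands close to the paper's argument: both ultimately rest on the classification of the Seifert fibered pieces of the JSJ decomposition of a knot exterior, and your treatment of the $\Gamma=\emptyset$ and cable cases matches what the paper gets by citing Seifert and \cite[Proposition~4.2]{kazez-roberts}, respectively. The difference is one of ordering in the composite case. The paper pins down the monodromy first and only then the $3$-manifold: following Kazez--Roberts, if a closed orbit of the suspension flow of $\varphi_0$ is a meridian of $K$ then $c(h)=0$, so the periodic map $\varphi_0$ restricts to the identity on $\partial\Sigma$, and a lemma of Jiang--Guo then forces $\varphi_0=\mathrm{id}$ on all of $\Sigma_0$; hence the outermost JSJ piece is literally $\Sigma_0\times S^1$, and Budney's classification says a product piece must be a composing space, so $\Sigma_0$ is planar. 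You instead classify the ambient piece $N_0$ first and then try to recover $\Sigma_0$ from it.

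The step you flagged as delicate is exactly where the content lies, and as written your resolution of it is circular: you propose to identify $\Sigma_0$ inside the composing space $P\times S^1$ by ``using that $\varphi_0$ is the identity in the composite case,'' but knowing that $\varphi_0=\mathrm{id}$ is equivalent to knowing that the induced fibration of $P\times S^1$ over $S^1$ is the product one, i.e.\ to knowing $\Sigma_0\cong P$ --- which is what you are trying to prove. Proposition~\ref{prop:0-or-1/n} alone does not close this, since $|c(h)|<1$ is consistent with any $c(h)=k/n\neq 0$. The gap is fixable within your framework: because $P$ has at least three boundary circles, the Seifert fibration of $P\times S^1$ is unique up to isotopy, and for a composing space its fibers restrict to meridians of $K$ on $\partial\nu(K)$; on the other hand, the suspension-flow fibration of the mapping torus of $\varphi_0$ restricts on $\partial\nu(K)$ to a curve whose slope is governed by $c(h)=k/n$ and which is a meridian only if $k=0$. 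Equating the two fibrations therefore forces $c(h)=0$, whence $\varphi_0$ fixes $\partial\Sigma$ pointwise and, being periodic, must be the identity (this last step is the Jiang--Guo input, which you should cite explicitly rather than absorb into ``the concrete structure of the fibration''). With that inserted, your identification of $\Sigma_0$ in both the composing-space and cable-space cases goes through, and the rest of your argument is correct.
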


\begin{proof}
The first case is due to Seifert \cite{seifert}; the second is a special case of \cite[Proposition~4.2]{kazez-roberts}.  The statement of that proposition did not include the case where $K$ is composite, but  the proof allows for the possibility that a closed orbit in the mapping torus of $\phi_0$ is a meridian of $K$. In this case, Kazez--Roberts  show that $c(h) = 0$, which implies that $\phi_0$ restricts to the identity on $\partial \Sigma\subset \Sigma_0$. It then follows from \cite[Lemma~2.6]{bns}, which is  really an application of \cite[Lemma~1.1]{jiang-guo}, that $\phi_0$ is the identity on all of $\Sigma_0$.  But then the outermost piece of the JSJ decomposition of $S^3 \setminus \nu(K)$, given by the mapping torus of $\phi_0$, is simply $Y_0=\Sigma_0 \times S^1$.  From here, \cite{budney-jsj} says that $\Sigma_0$ must be a planar surface with at least three boundary components, and that  $K$ is a connected sum of the  nontrivial knots whose complements are the connected components of $(S^3\setminus \nu(K)) \setminus Y_0$.
\end{proof}

Let $\hat\Sigma = \Sigma\cup D$ be the closed surface obtained by capping off the fiber of $K$ with a disk, and let \[\hat h:\hat\Sigma\to\hat\Sigma\] be the map obtained by extending the monodromy $h$ by the identity over the capping disk $D$. We call $\hat h$  the \emph{closed monodromy} of $K$; note that its mapping torus  is $S^3_0(K)$. Let \[\hat\Sigma_0 = \Sigma_0 \cup D.\] Let $(\hat\varphi,\hat\Gamma)$ be the  Nielsen--Thurston form of $\hat h$. The Nielsen--Thurston forms of $h$ and  $\hat h$ can be very different in general (for instance, they can be freely isotopic to pseudo-Anosov and periodic maps, respectively), but this is ruled out under some mild hypotheses: 

\begin{theorem} \label{thm:rv-fill-monodromy}
If the monodromy $h$ is veering and $\Sigma_0$ is not a pair of pants, then
\begin{itemize}
\item $\hat\Gamma\subset \Gamma$ and $\varphi$ agrees with $\hat\varphi$ on $\Sigma\setminus \Sigma_0 = \hat\Sigma\setminus\hat\Sigma_0$;\vspace{1mm}
\item if $\varphi_0$ is periodic, then the pseudo-Anosov components of $\Sigma\setminus\Gamma$ and $\hat\Sigma\setminus\hat{\Gamma}$ are equal; \vspace{1mm}
\item if $\varphi_0$ is pseudo-Anosov, then $\hat\Gamma= \Gamma$ and the restriction of $\hat\varphi$ to $\hat\Sigma_0$ is freely isotopic to a pseudo-Anosov map  $\hat\varphi_0$ which agrees with   $\varphi_0$ on $\Sigma_0$ and fixes one point $p$ in $D$.
\end{itemize}
\end{theorem}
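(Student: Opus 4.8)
The plan is to build the Nielsen--Thurston form of $\hat h$ from that of $h$ by capping off its outermost piece $\Sigma_0$, and then to recognize the result as canonical using the uniqueness of the Nielsen--Thurston form.

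First I would understand the capped-off outermost map. The restriction $h|_{\Sigma_0}$ is freely isotopic to a periodic or pseudo-Anosov map $\varphi_0$, and near $\partial\Sigma\subset\Sigma_0$ the map $\varphi_0$ acts, in suitable coordinates on a collar, as a rotation: this is immediate if $\varphi_0$ is periodic, and if $\varphi_0$ is pseudo-Anosov it cyclically permutes the $q$ prongs of its invariant foliations along $\partial\Sigma$, say by $k$. I would extend this rotation over the capping disk $D$ to obtain a map $\hat\varphi_0\colon\hat\Sigma_0\to\hat\Sigma_0$ that agrees with $\varphi_0$ on $\Sigma_0$, and extend the free isotopy from $h|_{\Sigma_0}$ to $\varphi_0$ over $D$ by the corresponding path of rotations of $D$; since $|c(h)|<1$ by Proposition~\ref{prop:0-or-1/n}, this path does not wind around $\partial\Sigma$, so the extension is unambiguous and exhibits $\hat h|_{\hat\Sigma_0}$ as freely isotopic to $\hat\varphi_0$. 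In the periodic case $\hat\varphi_0$ is again periodic. In the pseudo-Anosov case the point to check is that $q\geq 2$: then capping off introduces at worst a legitimate $q$-pronged singularity at the center $p$ of $D$ (a regular point when $q=2$) rather than a forbidden $1$-pronged one, so $\hat\varphi_0$ is pseudo-Anosov; and since $c(h)=k/q$ with $0<|k|<q$, the extended rotation of $D$ fixes only $p$. This is precisely where veering is used: Theorem~\ref{thm:fdtc-veering} gives $c(h)\neq 0$, hence $k\neq 0$, and together with $|c(h)|<1$ this forces $q\geq 2$.

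Next I would check that $\Gamma$, viewed as a curve system in $\hat\Sigma$, is an adequate reducing system for $\hat h$. Its components remain pairwise non-isotopic and essential in $\hat\Sigma$: no component of $\Gamma$ is isotopic to $\partial\Sigma$ in $\Sigma$ (otherwise $\Sigma_0$ is an annulus on which $\varphi$ is the identity, making that component removable and contradicting minimality), and the hypothesis that $\Sigma_0$ is not a pair of pants rules out $\hat\Sigma_0$ being an annulus, the only way two components of $\Gamma$ could become isotopic in $\hat\Sigma$. Moreover $\Gamma$ is $\hat h$-invariant, and $\hat h$ restricts to a periodic or pseudo-Anosov map on every component of $\hat\Sigma\setminus\Gamma$: the components other than $\hat\Sigma_0$ are unchanged and carry $\hat h=h$, while $\hat\Sigma_0$ carries $\hat\varphi_0$. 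Since the canonical reducing system is contained in every adequate reducing system \cite{blm}, this gives $\hat\Gamma\subseteq\Gamma$. Then I would pin down which curves survive: a component $\gamma$ of $\Gamma$ not lying on $\partial\Sigma_0$ borders two components of $\Sigma\setminus\Gamma$ disjoint from $\Sigma_0$, on which $\hat h=h$, so if $\gamma$ were removable for $\hat h$ the identical surfaces and maps would make it removable for $h$, contradicting minimality; hence $\Gamma\setminus\hat\Gamma\subseteq\partial\Sigma_0$. In particular the components of $\hat\Sigma\setminus\hat\Gamma$ disjoint from $\hat\Sigma_0$ are exactly the components of $\Sigma\setminus\Gamma$ disjoint from $\Sigma_0$, where $\hat h=h$, so we may take $\hat\varphi=\varphi$ there --- the first bullet.

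Finally I would treat the two cases. If $\varphi_0$ is pseudo-Anosov, then $\hat\Sigma_0$ is a pseudo-Anosov piece, so no component of $\Gamma$ on $\partial\Sigma_0$ can be absorbed into a standard (periodic or pseudo-Anosov) piece and none is removable; thus $\hat\Gamma=\Gamma$ and $\hat\varphi|_{\hat\Sigma_0}$ is freely isotopic to $\hat\varphi_0$, which has the stated properties --- the third bullet. If $\varphi_0$ is periodic, then any component of $\partial\Sigma_0$ that gets dropped merges $\hat\Sigma_0$ only with periodic neighbors (merging a periodic piece with a pseudo-Anosov one is never standard), so the component of $\hat\Sigma\setminus\hat\Gamma$ containing $D$ is periodic; hence every pseudo-Anosov component of $\hat\Sigma\setminus\hat\Gamma$ is one of the unchanged pseudo-Anosov components of $\Sigma\setminus\Gamma$, and conversely --- the second bullet. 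The hard part is the capping-off step in the pseudo-Anosov case: one has to relate the fractional Dehn twist coefficient to the prong rotation number at $\partial\Sigma$ and invoke veering to exclude the $1$-pronged singularity that would obstruct $\hat\varphi_0$ from being pseudo-Anosov; everything else is bookkeeping with canonical reducing systems and with free isotopies supported near the capping disk.
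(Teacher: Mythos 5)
Your proposal is correct and follows essentially the same route as the paper's proof: veering together with the bound $|c(h)|<1$ forces at least two boundary prongs, so that $\varphi_0$ caps off to a map of the same type with a unique fixed point in $D$; the capped-off $\Gamma$ remains an adequate reducing system with $\Gamma\setminus\hat\Gamma$ supported on $\partial\hat\Sigma_0$; and an invariant essential, non-boundary-parallel curve inside a merged piece forces that piece (hence each constituent) to be periodic. The only cosmetic difference is that the paper obtains $\Gamma=\hat\Gamma\cup\partial\hat\Sigma_0$ in one stroke from minimality of the canonical reducing system (adequacy of $\hat\Gamma\cup\partial\hat\Sigma_0$ as a reducing system for $h$), whereas you argue curve-by-curve about removability, but the conclusions agree.
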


In brief, the hypothesis that $h$ is veering guarantees that if $\varphi_0$ is pseudo-Anosov then its stable and unstable invariant foliations have at least two prongs on $\partial \Sigma$. This implies that  $\Sigma_0$ caps off to a pseudo-Anosov component with one additional fixed point in the capping disk while the other components are unchanged. The hypothesis  that $\Sigma_0$ is not a pair of pants guarantees that if $\varphi_0$ is periodic then capping off preserves all pseudo-Anosov components. This theorem will be crucial for understanding  the fixed points of the pseudo-Anosov components of the closed monodromies of L-space knots and more generally ffpf knots in our proof of Theorem \ref{thm:zero-surgery-fpf}.

\begin{proof}[Proof of Theorem \ref{thm:rv-fill-monodromy}]
We first show that $\varphi_0$ extends to a map \[\hat\varphi_0: \hat\Sigma_0 \to \hat\Sigma_0\] of the same type (periodic or pseudo-Anosov) as $\varphi_0$. If $\varphi_0$ is periodic, then it rotates $\partial \Sigma$ by some rational multiple of $2\pi$, and we simply extend this rotation radially across the disk $D$ to define $\hat\varphi_0$. 

Now suppose that $\varphi_0$ is pseudo-Anosov.  By definition, this means that there is a pair of transverse singular measured  foliations $(\cF_s,\mu_s)$ and $(\cF_u,\mu_u)$ of $\Sigma_0$, called the \emph{stable} and \emph{unstable} foliations of $\varphi_0$, such that
\[ \varphi_0(\cF_s,\mu_s) = (\cF_s,\lambda^{-1}\mu_s) \quad\text{and}\quad \varphi_0(\cF_u,\mu_u) = (\cF_u,\lambda\mu_u), \]
for some real number $\lambda > 1$. The interior singularities of these foliations  have  at least 3 \emph{prongs}, and  each of these foliations has some number $n \geq 1$ of \emph{boundary prongs} ending on $\partial \Sigma \subset \partial \Sigma_0$. Since  $\varphi_0$ preserves the boundary prongs, the fractional Dehn twist coefficient of $h$ is given by  \[c(h) = k/n\] for some integer $k$; see \cite[\S3.2]{hkm-veering}. Since $h$ is veering, Theorem~\ref{thm:fdtc-veering} says that $k\neq 0$. Meanwhile, Proposition~\ref{prop:0-or-1/n} implies that $|k|<n$. It follows that $n\geq 2$. In this case,  $\varphi_0$ extends  to a pseudo-Anosov homeomorphism $\hat\varphi_0$ of $\hat\Sigma_0$, and $\cF_s$ and $\cF_u$ extend to stable and unstable foliations $\hat\cF_s$ and $\hat\cF_u$ for $\hat\varphi_0$ in which the $n$ boundary prongs extend to $n$ prongs meeting at point $p$ in $D$. The point $p$ is a singularity of these two foliations when $n\geq 3$, and a smooth point when $n=2$; see Figure \ref{fig:prongs}. Note  that $p$ is the unique fixed point of $\hat\varphi_0$  in $D$.

(For the reader interested in more details about how one extends the foliations and transverse measures over the disk $D$, we take the following easy way out: a pseudo-Anosov homeomorphism of a surface with boundary is, by definition, a map which restricts on the complement of the boundary to a pseudo-Anosov map of the corresponding punctured surface. Capping off a boundary component from this point of view corresponds simply to filling in the corresponding puncture with a point.)

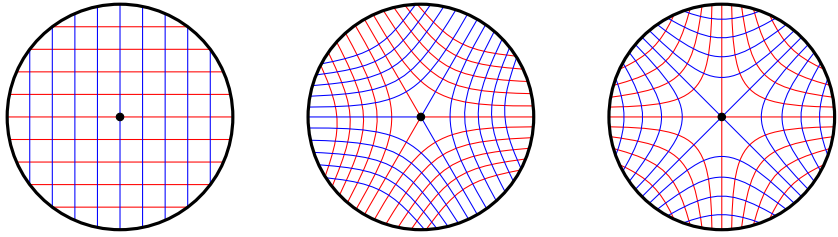
\begin{figure}[ht]
\begin{tikzpicture}[every path/.style={thin}]
\foreach \i in {2,3,4} { \coordinate (center\i) at (4*\i,0); }

\begin{scope} 
{ \tikzset{every path/.style={}}
\clip (center2) circle (1.5); }
\foreach \j in {-6,...,6} { \draw[red] (center2) ++ (-2,0.3*\j) -- ++(4,0); }
\foreach \j in {-6,...,6} { \draw[blue] (center2) ++ (0.3*\j,-2) -- ++(0,4); }
\end{scope}

\begin{scope} 
{ \tikzset{every path/.style={}}
\clip (center3) circle (1.5); }
\foreach \j in {0,120,240} {
  \draw[red] (center3) -- ++(\j:2);
  \foreach \n in {2,4,...,10} {
    \draw[red] ($(center3)+(\j:2)+(\j+90:0.075*\n)$) .. controls ($(center3)+(\j+60:0.1*\n-0.1/\n)$) .. ($(center3)+(\j+120:2)+(\j:0.075*\n)$);
  }
}
\foreach \j in {60,180,300} {
  \draw[blue] (center3) -- ++ (\j:2);
  \foreach \n in {2,4,...,10} {
    \draw[blue] ($(center3)+(\j:2)+(\j+90:0.075*\n)$) .. controls ($(center3)+(\j+60:0.1*\n-0.1/\n)$) .. ($(center3)+(\j+120:2)+(\j:0.075*\n)$);
  }
}
\end{scope}

\begin{scope} 
{ \tikzset{every path/.style={}}
\clip (center4) circle (1.5); }
\foreach \j in {0,90,180,270} {
  \draw[red] (center4) -- ++(\j:2);
  \foreach \n in {3,6,...,18} {
    \draw[red] ($(center4)+(\j:2)+(\j+90:0.03*\n+0.1/\n)$) .. controls ($(center4)+(\j+45:0.1*\n-0.3/\n)$) .. ($(center4)+(\j+90:2)+(\j:0.03*\n+0.1/\n)$);
  }
}
\foreach \j in {45,135,225,315} {
  \draw[blue] (center4) -- ++ (\j:2);
  \foreach \n in {3,6,...,18} {
    \draw[blue] ($(center4)+(\j:2)+(\j+90:0.03*\n+0.1/\n)$) .. controls ($(center4)+(\j+45:0.1*\n-0.3/\n)$) .. ($(center4)+(\j+90:2)+(\j:0.03*\n+0.1/\n)$);
  }
}
\end{scope}

\foreach \i in {2,3,4} {
\draw[very thick] (center\i) circle (1.5);
\draw[black,fill=black] (center\i) circle (0.05);
}

\end{tikzpicture}
\caption{The transverse foliations $\hat\cF_s$ and $\hat\cF_u$ on $D$, in the cases where $\cF_s$ and $\cF_u$ each have $n=2,3,4$ boundary prongs.  When $n=2$ the point $p$ at the center is a smooth point of each foliation, while for $n\geq 3$ it is an $n$-pronged singular point.} \label{fig:prongs}
\end{figure}

It follows in either case that $\Gamma$ is a reducing system for $\hat h$ as well, and thus contains $\hat\Gamma$. Moreover, $\hat\varphi$ is the extension of $\varphi$ by the identity over $D$, and in particular agrees with $\varphi$ on \[\Sigma \setminus \Sigma_0 = \hat\Sigma \setminus \hat\Sigma_0,\] proving the first item of the theorem. For the other items, we analyze the difference $\Gamma \setminus \hat\Gamma$.
 
We first claim that $\Gamma\setminus \hat\Gamma$ consists of components of $\partial \hat\Sigma_0$. Indeed, $\hat\varphi$ fixes $\Gamma$ and thus $\partial \hat\Sigma_0$ setwise. It follows that $\hat\Gamma\cup \partial \hat\Sigma_0\subset \Gamma$ is also a reducing system for $\hat h,$ and hence for $h$ as well. But then the minimality of $\Gamma\subset \Sigma$ as the canonical reducing system for $h$ implies that \[\Gamma = \hat\Gamma \cup \partial \hat\Sigma_0,\] which proves our claim about $\Gamma\setminus \hat\Gamma$.

We next claim that every component of $\Gamma$ is homotopically essential in $\hat\Sigma$, and that no two components of $\Gamma$ are freely homotopic in $\hat\Sigma$. Indeed, the analogous conditions hold for $\Gamma \subset \Sigma$ by the minimality of this reducing system, and the only way this fails upon capping off is if $\hat\Sigma_0$ is a disk or an annulus;  equivalently, if $\Sigma_0$ is an annulus or a pair of pants.  Minimality of $\Gamma\subset \Sigma$ rules out the first possibility, while the second is excluded  by the hypotheses of the theorem. 

We now use these two claims to prove the rest of the theorem.   The third item follows from what we have already established if we can show that $\hat\Gamma = \Gamma$ when $\varphi_0$ is pseudo-Anosov, or equivalently if we can show that $\hat\Gamma \neq \Gamma$ implies that $\Sigma_0$ is not a pseudo-Anosov component of $\Sigma\setminus \Gamma$. 

Let us therefore assume that  $\hat\Gamma \neq \Gamma$.  Let $\hat\Sigma_1$ be the component of $\hat\Sigma\setminus \hat\Gamma$ containing $\hat\Sigma_0$. By the first claim,  this must be a proper containment, and we can write \[\hat\Sigma_1=\hat\Sigma_0 \cup S_1\cup \dots \cup S_k,\] where  $S_1,\dots,S_m$ are some components of $\Sigma\setminus \Gamma$   adjacent to $\Sigma_0$. For the third item in the theorem, it suffices to show that $\Sigma_0$ is not a pseudo-Anosov component, as discussed; for the second item, it suffices to show that none of the $S_j$ are pseudo-Anosov components. We  prove both at once.

Each $S_j$ shares some boundary component $\gamma$ with $\hat\Sigma_0$. Some power $(\hat\varphi)^n$ restricts to a map on $\hat\Sigma_1$ which is freely isotopic either to the identity or to a pseudo-Anosov map. Since  $\hat\varphi$ fixes $\Gamma$ setwise and $\gamma$ is a component of $\Gamma$, we can also assume that $(\hat\varphi)^n$ fixes  $\gamma$. But the second claim implies that $\gamma$ is homotopically essential and not boundary parallel in $\hat\Sigma_1,$ which means that $(\hat\varphi)^n|_{\hat\Sigma_1}$ is not freely isotopic to a pseudo-Anosov map; it is therefore freely isotopic to the identity. But then $(\hat\varphi)^n$ restricts to maps on $\hat\Sigma_0$ and $S_j$ which are  both freely isotopic to the identity. 

Now, the restrictions of $(\hat\varphi)^n$ and $\varphi^n$ to $S_j$ agree since  $\varphi = \hat\varphi$ outside of $D$. It follows that $\varphi^n|_{S_j}$ is freely isotopic to the identity, and hence that $S_j$ is a periodic component of $\Sigma\setminus \Gamma$.  We can also conclude that $\Sigma_0$ is a periodic component of $\Sigma\setminus \Gamma$. Indeed, if it is not then $\varphi_0$ is   pseudo-Anosov, in which case the same is true of $\hat\varphi_0$, as we showed at the beginning of the proof. This would imply that the restriction of $(\hat\varphi)^n$ to $\hat\Sigma_0$ is freely isotopic to a pseudo-Anosov map, but we have just shown that it is freely isotopic to the identity, a contradiction.
It follows that $\Sigma_0$ and $S_j$ are both periodic components of $\Sigma\setminus \Gamma$, completing the proof of the theorem.
\end{proof}

\begin{corollary} \label{cor:not-veering}
If  $K \subset S^3$ is a fibered hyperbolic knot such that  $S^3_0(K)$ is not hyperbolic, then its monodromy is not veering.
\end{corollary}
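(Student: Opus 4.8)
The plan is to argue by contraposition: assume the monodromy $h$ is veering, and show that $S^3_0(K)$ is hyperbolic. Since $K$ is a fibered hyperbolic knot, the discussion preceding Proposition~\ref{prop:periodic-cable} tells us that its canonical reducing system is empty and that its monodromy is pseudo-Anosov; in the notation of \S\ref{sec:veering}, $\Gamma = \emptyset$ and $\varphi_0 = \varphi$ is pseudo-Anosov. The fiber surface $\Sigma = \Sigma_0$ has exactly one boundary component, so it is certainly not a pair of pants, and the hypothesis that $h$ is veering lets me apply the third bullet of Theorem~\ref{thm:rv-fill-monodromy}: it gives $\hat\Gamma = \Gamma = \emptyset$ and shows that the closed monodromy $\hat h$ is freely isotopic to a pseudo-Anosov homeomorphism $\hat\varphi_0$ of the closed surface $\hat\Sigma$.

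Next I would verify that $\hat\Sigma$ has negative Euler characteristic, i.e.\ that $g(K)\geq 2$, so that $\hat\varphi_0$ is pseudo-Anosov in the genuine sense. Since $K$ is hyperbolic it is nontrivial, so $g(K)\geq 1$; and if $g(K)=1$, then $K$ is one of the three genus-one fibered knots in $S^3$, namely the two trefoils or the figure-eight knot, none of which is simultaneously hyperbolic and veering: the trefoils are not hyperbolic, and the figure-eight knot, being amphichiral, has $c(h)=-c(h)=0$, so by Theorem~\ref{thm:fdtc-veering} its monodromy is not veering. Hence $g(K)\geq 2$. A free isotopy of homeomorphisms of a closed surface is an honest isotopy, and isotopic monodromies have homeomorphic mapping tori, so $S^3_0(K)$ --- the mapping torus of $\hat h$ --- is homeomorphic to the mapping torus of $\hat\varphi_0$, which is hyperbolic by Thurston's theorem on surface bundles over the circle \cite{thurston-fiber}. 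This contradicts the hypothesis that $S^3_0(K)$ is not hyperbolic, and the contrapositive is proved.

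The only step that needs real care is the exclusion of the torus case $g(K)=1$: without it, capping off could produce an Anosov map of $T^2$ whose mapping torus carries $\mathrm{Sol}$ rather than hyperbolic geometry, and the desired conclusion would genuinely fail --- which is precisely why the classification of genus-one fibered knots and the amphichirality of the figure-eight enter. Everything else is a bookkeeping application of Theorem~\ref{thm:rv-fill-monodromy} together with Thurston's fibered hyperbolization theorem.
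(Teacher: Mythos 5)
Your proof is correct and follows essentially the same route as the paper's: argue by contraposition, apply Theorem~\ref{thm:rv-fill-monodromy} to conclude that the closed monodromy is freely isotopic to a pseudo-Anosov map, and invoke Thurston's hyperbolization of fibered $3$-manifolds. The one place you go beyond the paper's (very terse) argument is the exclusion of the genus-one case, where capping off a once-punctured-torus pseudo-Anosov would yield an Anosov map of $T^2$ with $\mathrm{Sol}$ mapping torus; your observation that the only genus-one fibered hyperbolic knot is the figure-eight, whose amphichirality forces $c(h)=0$ and hence non-veering monodromy by Theorem~\ref{thm:fdtc-veering}, is a legitimate and welcome piece of extra care that the paper's proof leaves implicit.
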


\begin{proof}
The monodromy $h:\Sigma\to \Sigma$ of $K$ is freely isotopic to a pseudo-Anosov map $\varphi_0$. In this case, $\Sigma_0 = \Sigma$ is not a pair of pants, since it has connected boundary.  Then $h$ is not veering; otherwise, Theorem \ref{thm:rv-fill-monodromy} would imply that $\hat\varphi_0$ is also pseudo-Anosov, which would imply that $S^3_0(K)$, the mapping torus of $\hat h$, is hyperbolic.
\end{proof}

\subsection{Fixed points and ffpf knots} 

If $K \subset S^3$ is a fibered knot of genus $g \geq 1$, then Baldwin and Vela-Vick proved in \cite{baldwin-velavick}  that
\[ \dim \hfkhat(K, g-1) \geq 1. \]
We will focus on knots for which  equality is achieved, motivating the following definition:

\begin{definition} \label{def:ffpf}
A knot $K \subset S^3$ is \emph{ffpf} (for ``fibered and fixed-point-free") if it is fibered of some genus $g \geq 1$, and 
\[ \dim \hfkhat(K,g-1) = 1. \] We will justify this terminology in Proposition~\ref{prop:pA-fixed-points}, which says that if a knot is ffpf then no pseudo-Anosov component of its monodromy can have fixed points. 
\end{definition}

This class of knots includes all nontrivial L-space knots and most \emph{almost L-space knots} \cite{bs-characterizing}, where  $K\subset S^3$ is  said to be an \emph{almost L-space knot}  if \[\dim\hfhat(S^3_m(K)) = m+2\] for sufficiently large integers $m$:

\begin{lemma} \label{lem:ffpf-examples}
Any nontrivial L-space knot, or almost L-space knot of genus $g\geq 3$, is ffpf.
\end{lemma}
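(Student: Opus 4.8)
The plan is to verify the defining inequality $\dim \hfkhat(K, g(K)-1) = 1$ separately for the two classes of knots, using known structural results about the knot Floer homology of (almost) L-space knots. For a nontrivial L-space knot $K$ of genus $g$, the key input is Ozsv\'ath--Szab\'o's theorem that $\hfkhat(K)$ is ``thin in each Alexander grading'': each group $\hfkhat(K,j)$ is either $0$ or $\F$, and the nonzero gradings are determined by the gaps in the Alexander polynomial. In particular $\hfkhat(K,g) = \F$ (this holds for any fibered knot, by Ozsv\'ath--Szab\'o's fiberedness detection, and L-space knots are fibered), and one then needs $\hfkhat(K,g-1) = \F$ as well, i.e.\ that $g-1$ is among the ``active'' Alexander gradings. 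This is where I would invoke the fact that the top coefficient of the Alexander polynomial of a nontrivial L-space knot is $1$ and the next nonzero coefficient is $-1$, occurring in degree $g - s$ for some $s \geq 1$; combined with the symmetry and the exponent structure forced by the L-space condition (the exponents of $\Delta_K$ decrease by at least $1$ at each step near the top, but in fact by exactly the right amount), one concludes $\hfkhat(K,g-1)$ has rank exactly $1$. Actually the cleanest route: since $K$ is a nontrivial L-space knot, $2g = \deg \Delta_K \geq 2$, so $g \geq 1$, and the staircase description of $\cfkinfty(K)$ shows the generator in Alexander grading $g$ is connected by a single differential/vertical arrow to a generator in grading $g-1$ in the full knot Floer complex, forcing $\dim\hfkhat(K,g-1) = 1$. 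I would cite \cite{osz-lens} (or the relevant staircase reference) for this.

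For an almost L-space knot $K$ of genus $g \geq 3$, I would appeal to the structural classification of $\cfkinfty$ of almost L-space knots from \cite{bs-characterizing}: such knots are fibered (so $\hfkhat(K,g) = \F$ and $g \geq 1$ is genuine), and their knot Floer homology is that of an L-space knot modified in a controlled way by a single ``box'' summand, which is supported in Alexander gradings bounded away from the top when $g$ is large enough. Concretely, the extra summand contributes to $\hfkhat$ only in gradings $\leq g-2$ once $g \geq 3$, so in grading $g-1$ the almost L-space knot looks exactly like an L-space knot, and the previous paragraph gives $\dim\hfkhat(K,g-1) = 1$. The genus hypothesis $g \geq 3$ is exactly what guarantees the box does not reach grading $g-1$ (or $g$); for small genus the box can interfere, which is why those cases are excluded.

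The main obstacle I anticipate is pinning down precisely which reference and which form of the almost-L-space classification gives the cleanest statement that the ``box'' summand is confined to Alexander gradings $\le g-2$ when $g \ge 3$ — this requires knowing the Alexander grading support of the box in the normalized $\cfkinfty$ model of \cite{bs-characterizing}, and checking the boundary case $g = 3$ carefully. The L-space case is essentially immediate from the literature; the genuine content of the lemma is the bookkeeping for almost L-space knots, and I would organize the proof so that both cases are reduced to the single assertion ``the generator in Alexander grading $g$ pairs off with a unique generator in grading $g-1$,'' which holds in the staircase and is unperturbed by a box when $g \geq 3$.
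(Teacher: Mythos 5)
Your L-space case is essentially correct, though it substitutes a different lower-bound mechanism for the one the paper uses. The paper's proof is a two-inequality sandwich: $\dim \hfkhat(K,g-1) \leq 1$ from the structural results on (almost) L-space knots (\cite{osz-lens} for L-space knots, the relevant remark in \cite{bs-characterizing} for almost L-space knots of genus $g \geq 3$), and $\dim \hfkhat(K,g-1) \geq 1$ from Baldwin--Vela-Vick's general inequality for \emph{all} nontrivial fibered knots \cite{baldwin-velavick} --- the same inequality that motivates the ffpf definition in the first place. You instead try to compute the grading-$(g-1)$ group directly. For L-space knots this works: the $t^{g-1}$-coefficient of $\Delta_K$ is exactly $-1$ (this needs a citation --- it follows, e.g., from Krcatovich's constraint $a_1 \leq g-2$ as in Theorem~\ref{thm:krcatovich} and Lemma~\ref{lem:other-alexander-restrictions}, or from Hedden--Watson), so the staircase has a generator at Alexander grading $g-1$ and \cite{osz-lens} gives dimension exactly $1$.

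The genuine gap is in your almost L-space case. Confining the box summand to Alexander gradings at most $g-2$ only gives the upper bound $\dim \hfkhat(K,g-1) \leq 1$; your claim that ``in grading $g-1$ the almost L-space knot looks exactly like an L-space knot, and the previous paragraph gives $\dim \hfkhat(K,g-1)=1$'' does not follow, because the staircase summand of an almost L-space knot is not (a priori) the staircase of an actual L-space knot, so nothing you have cited guarantees that its second step occurs at grading $g-1$ rather than lower down. In other words, you have not ruled out $\hfkhat(K,g-1)=0$. The clean fix is exactly the paper's missing ingredient: once you know the knot is fibered (which \cite{bs-characterizing} gives for genus $\geq 2$), Baldwin--Vela-Vick's inequality $\dim \hfkhat(K,g-1) \geq 1$ supplies the lower bound with no further analysis of the complex. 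You could also adopt this for the L-space case and dispense with the Alexander-polynomial argument entirely, which is what the paper does.
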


\begin{proof}
L-space knots are fibered \cite{ghiggini,ni-hfk}, with $\dim \hfkhat(K,g-1) \leq 1$  \cite[Theorem~1.2]{osz-lens}.  Almost L-space knots of genus $g \geq 2$ are fibered  \cite[Proposition~3.9]{bs-characterizing}, and if $g \geq 3$ then the inequality $\dim \hfkhat(K,g-1) \leq 1$ follows from \cite[Remark~3.12]{bs-characterizing}; see also \cite{binns-almost}. The lemma then follows from Baldwin--Vela-Vick's  inequality in the other direction \cite{baldwin-velavick}.
\end{proof}

\begin{lemma} \label{lem:hfk-outer-monodromy}
If  $K$ is an ffpf knot, then it is prime, it is not a $(\pm1,q)$-cable for any $q \geq 2$, and its monodromy is veering. 
\end{lemma}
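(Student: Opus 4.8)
The three conclusions rest on essentially independent inputs, so I would establish them in turn. \emph{Primeness}: suppose toward a contradiction that $K = K_1 \# K_2$ with both summands nontrivial. The summands of a fibered knot are fibered, so $K_1$ and $K_2$ are fibered of genera $g_i := g(K_i) \geq 1$; thus $g := g(K) = g_1 + g_2$ and $\hfkhat(K_i, g_i) \cong \F$ \cite{ghiggini, ni-hfk}. The Künneth formula $\hfkhat(K) \cong \hfkhat(K_1) \otimes_\F \hfkhat(K_2)$, together with the vanishing of $\hfkhat(K_i, \cdot)$ in Alexander gradings above $g_i$, gives
\[ \hfkhat(K, g-1) \;\cong\; \hfkhat(K_1, g_1 - 1) \,\oplus\, \hfkhat(K_2, g_2 - 1). \]
By the Baldwin--Vela-Vick inequality \cite{baldwin-velavick} each summand is nonzero, so $\dim \hfkhat(K, g-1) \geq 2$, contradicting the ffpf hypothesis. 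Hence $K$ is prime.

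\emph{Not a $(\pm1,q)$-cable}: for this I would work with the Alexander polynomial rather than the full knot Floer homology. Suppose $K$ is the $(\pm1,q)$-cable of a knot $J$ with $q \geq 2$. Then $J$ must be nontrivial --- otherwise $K = T_{\pm1,q}$ is the unknot, which has genus $0$ and so is not ffpf --- so $g_J := g(J) \geq 1$. The cabling formulas give $g := g(K) = q\,g_J + g(T_{\pm1,q}) = q\,g_J$ and $\Delta_K(t) \doteq \Delta_{T_{\pm1,q}}(t)\,\Delta_J(t^q) \doteq \Delta_J(t^q)$, using $g(T_{\pm1,q}) = 0$ and $\Delta_{T_{\pm1,q}} \doteq 1$. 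Hence $\Delta_K$ is supported entirely in Alexander gradings divisible by $q$, whereas $g - 1 = q\,g_J - 1$ is not divisible by $q$, so the coefficient of $t^{g-1}$ in $\Delta_K$ vanishes. On the other hand, the graded Euler characteristic of $\hfkhat(K)$ recovers $\Delta_K$, so the ffpf hypothesis $\dim \hfkhat(K, g-1) = 1$ forces that coefficient to be $\pm 1$. This contradiction rules out every $(\pm1,q)$-cable with $q \geq 2$.

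\emph{Veering}: that the monodromy $h$ is veering is a theorem of Ni \cite{ni-exceptional}, which I would invoke directly. I expect this to be the real obstacle: it is not formal, and it uses the full strength of the ffpf condition, not merely its consequence --- via Ni's fixed-point bound \cite{ni-monodromy, ni-fixed} --- that $h$ is freely isotopic to a fixed-point-free map. (For instance, the figure-eight knot has fixed-point-free but non-veering monodromy, and is not ffpf.) With primeness and the cable statement in hand, these three conclusions are exactly the hypotheses needed to feed the Nielsen--Thurston analysis of this section into the proof of Theorem~\ref{thm:zero-surgery-fpf}; combined with Proposition~\ref{prop:periodic-cable}, for instance, they ensure that the outermost component $\Sigma_0$ of the monodromy is never a pair of pants.
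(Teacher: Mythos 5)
Your proposal is correct and follows essentially the same route as the paper: primeness via the connected-sum behaviour of $\hfkhat$ in the grading $g-1$ (the paper simply cites the proof of \cite[Corollary~1.4]{baldwin-velavick}, which is the K\"unneth argument you spell out), the $(\pm1,q)$-cable exclusion via the vanishing $t^{g-1}$-coefficient of $\Delta_K(t)\doteq\Delta_J(t^q)$ against the mod-$2$ Euler characteristic constraint, and veering by direct appeal to \cite[Theorem~A.1]{ni-exceptional}. Nothing essential differs.
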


\begin{proof}
Suppose $K$ has genus $g\geq 1$. If $K$ were a nontrivial connected sum, then the proof of \cite[Corollary~1.4]{baldwin-velavick}  would show that $\dim \hfkhat(K,g-1) \geq 2$.  If instead $K$ were the $(\pm1,q)$-cable of some companion knot $C$, with $q \geq 2$, then their Alexander polynomials  would satisfy
\[ \Delta_K(t) = \Delta_C(t^q), \]
with nonzero $t^i$-coefficient only when $i$ is a multiple of $q$; but then $g = q\cdot g(C)$ implies that $\Delta_K(t)$ has $t^{g-1}$-coefficient $a_{g-1} = 0$, and so
\[ \dim \hfkhat(K,g-1) \equiv \chi(\hfkhat_\ast(K,g-1)) = a_{g-1} = 0 \pmod{2}. \]
In either case, this  contradicts $\dim \hfkhat(K,g-1) = 1$, so $K$ must be prime and not a $(\pm1,q)$-cable. Lastly, the monodromy of $K$ is  veering by \cite[Theorem~A.1]{ni-exceptional}.
\end{proof}

The ffpf condition has the following geometric consequence, which in combination with Theorem \ref{thm:rv-fill-monodromy} will ultimately let us recover a ffpf knot from its 0-surgery in the next subsection: 

\begin{proposition} \label{prop:pA-fixed-points}
Let $K$ be a ffpf knot with monodromy $h: \Sigma \to \Sigma$, and let $(\varphi,\Gamma)$ be the Nielsen--Thurston form of $h$. Then for each pseudo-Anosov component $S$ of $\Sigma\setminus \Gamma$ that is fixed setwise by $\varphi$, the restriction of $\varphi$ to $S$ is freely isotopic to a pseudo-Anosov map with no fixed points.
\end{proposition}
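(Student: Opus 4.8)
The plan is to derive this from Ni's bound on the number of fixed points of a knot monodromy, together with the way fixed points of a surface homeomorphism distribute over the pieces of its Nielsen--Thurston decomposition. Write $g=g(K)\ge 1$. Since $K$ is ffpf we have $\dim\hfkhat(K,g-1)=1$, so Ni's theorem \cite{ni-monodromy,ni-fixed} shows that $h$ is freely isotopic to a homeomorphism with at most $\dim\hfkhat(K,g-1)-1=0$ fixed points; that is, $h$ is freely isotopic to a fixed-point-free homeomorphism of $\Sigma$. As $\varphi$ is isotopic rel $\partial\Sigma$ to $h$, the map $\varphi$ is likewise freely isotopic to a fixed-point-free homeomorphism, so its Nielsen number satisfies $N(\varphi)=0$.

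Next I would push this vanishing down to a fixed pseudo-Anosov component $S$, so that $\varphi(S)=S$ and $\varphi|_S$ is freely isotopic to its canonical pseudo-Anosov representative $\psi\colon S\to S$. By the fixed-point theory of reducible surface homeomorphisms in the spirit of \cite{jiang-guo} --- the behavior of the Nielsen number under a Nielsen--Thurston decomposition --- the essential fixed-point classes of $\psi$ persist as essential fixed-point classes of $\varphi$, so $N(\psi)\le N(\varphi)=0$. On the other hand, every fixed point of $\psi$ in the interior of $S$ has nonzero Lefschetz index: this is immediate from the local normal forms of a pseudo-Anosov map at a fixed point (index $\pm1$ at a regular point, index $1-k$ or $+1$ at a fixed $k$-pronged singularity), using that pseudo-Anosov foliations have no interior $1$-pronged singularities. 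Since a pseudo-Anosov map realizes the minimal number of fixed points in its free isotopy class \cite{flp} --- indeed, for a pseudo-Anosov map the number of interior fixed points equals the Nielsen number --- the equality $N(\psi)=0$ forces $\psi$ to have no fixed points in the interior of $S$.

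It then remains to rule out fixed points of $\psi$ on $\partial S$. Each component of $\partial S$ is either a curve of the canonical reducing system $\Gamma$ or, when $S=\Sigma_0$ is the outermost piece, the fiber boundary $\partial\Sigma$. Along $\partial\Sigma$ the fractional Dehn twist coefficient is $c(h)=k/n$ with $k\neq 0$ and $|k|<n$ --- using that an ffpf monodromy is veering (Lemma~\ref{lem:hfk-outer-monodromy}) together with Theorem~\ref{thm:fdtc-veering} and Proposition~\ref{prop:0-or-1/n}, exactly as in the proof of Theorem~\ref{thm:rv-fill-monodromy} --- so $\psi$ cyclically permutes the $n\ge 2$ boundary prongs nontrivially and has no fixed points on $\partial\Sigma$. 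Along an interior reducing curve $\gamma\subset\partial S$, the map $\psi$ fixes $\gamma$ setwise, so a fixed point on $\gamma$ would force $\psi$ to fix the prongs of the invariant foliations along $\gamma$; collapsing $\gamma$ to a point would then produce a fixed singular point of negative index, contradicting $N(\psi)=0$ once more. Hence $\psi$ is fixed-point-free, which is the assertion of the proposition.

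The step I expect to be the main obstacle is the descent in the second paragraph: making precise that ``$\varphi$ is freely isotopic to a fixed-point-free map'' genuinely forces the canonical pseudo-Anosov representative of each pseudo-Anosov piece to be fixed-point-free. This rests on the compatibility of the Nielsen number with the Nielsen--Thurston decomposition --- equivalently, on the rigidity of the fixed-point set of a pseudo-Anosov map under free isotopy of the ambient homeomorphism --- and on the careful bookkeeping of fixed points near the reducing curves and the fiber boundary, where the veering hypothesis is precisely what rules out stray index-zero fixed points.
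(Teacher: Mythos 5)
Your argument is correct in outline but takes a genuinely different route from the paper. The paper does not pass through Nielsen fixed point theory at all: it goes directly into the proof of \cite[Theorem~1.2]{ni-fixed}, where Ni builds a closed surface $P = \Sigma\cup\Sigma'$ and a homeomorphism $\sigma$ with $\sigma|_\Sigma = h$ and $\dim \hf^\symp(P,\sigma) = \dim\hfkhat(K,g-1)-1 = 0$, and then invokes Cotton--Clay's structure theorem \cite[Theorem~4.2]{cotton-clay}, which says that interior fixed points of pseudo-Anosov pieces contribute linearly independent generators to $\hf^\symp$; the bound $m \leq \dim\hf^\symp(P,\sigma) = 0$ is then immediate. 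You instead take the \emph{output} of Ni's theorem (that $h$ is freely isotopic to a fixed-point-free map), convert it into the vanishing of the Nielsen number $N(h)=N(\varphi)=0$, and then descend to the pseudo-Anosov pieces via the Jiang--Guo analysis of fixed point classes of the Thurston canonical form. This works, and it has the virtue of quoting Ni's theorem as a black box, but be aware that the two routes run on the same engine: Ni's fixed-point bound is itself proved via the symplectic Floer computation plus Cotton--Clay, and the step you correctly flag as the main obstacle --- that distinct interior fixed points of a pseudo-Anosov piece remain \emph{distinct, essential} fixed point classes of the reducible map on all of $\Sigma$ (they cannot merge across reducing curves or cancel against contributions from periodic pieces) --- is precisely the content of \cite{jiang-guo} that Cotton--Clay's theorem packages for you; it must be cited in that precise form rather than as a generic ``compatibility of $N$ with the decomposition.'' Your local index computation (index $\pm 1$ at a regular fixed point, $1-k$ or $+1$ at a fixed $k$-pronged singularity, all nonzero since there are no interior $1$-prongs) is correct and is what makes interior pseudo-Anosov fixed points essential. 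Your third paragraph on boundary fixed points addresses something the paper's proof leaves implicit (the relevant count there is of \emph{interior} fixed points, with boundary behavior governed by prong rotation and handled separately in Theorem~\ref{thm:rv-fill-monodromy}); the $\partial\Sigma$ case via $c(h)=k/n$ with $0<|k|<n$ is fine, but the ``collapse $\gamma$ to a point'' argument for interior reducing curves is loose --- fortunately nothing downstream requires it.
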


\begin{proof}
Suppose that $K$ has genus $g\geq 1$. In the proof of  \cite[Theorem~1.2]{ni-fixed},  Ni constructs a closed surface $P = \Sigma\cup \Sigma'$ and a homeomorphism $\sigma: P \to P$ such that
\begin{itemize}
\item $\sigma|_{\Sigma} = h$, which implies in particular that $\sigma$ fixes $\partial \Sigma$ pointwise;\vspace{1mm}
\item $\sigma|_{\Sigma'}$ is the monodromy of some fibered cable knot $L' \subset S^1\times S^2$;\vspace{1mm}
\item the symplectic Floer homology of the mapping class of $\sigma$ is given by
\[ \dim \hf^\symp(P,\sigma) = \dim \hfkhat(K,g-1) - 1; \]
\item if  $(\varphi',\Gamma')$ is the Nielsen--Thurston form of  $\sigma|_{\Sigma'}$, then \[(\varphi\cup \varphi',\, \Gamma \cup \partial \Sigma \cup \Gamma')\] is the Nielsen--Thurston form of $\sigma$. 
\end{itemize}
Cotton-Clay describes  how to compute the symplectic Floer homology of a mapping class in \cite[Theorem~4.2]{cotton-clay}; see also \cite[Theorem~1.3]{ni-fixed}. Most importantly for us, interior fixed points of pseudo-Anosov components  contribute linearly independent elements to Floer homology. In particular, if $S$ is a pseudo-Anosov component of $\Sigma \setminus \Gamma$ and hence of $P \setminus (\Gamma\cup\partial \Sigma \cup \Gamma')$, with $\varphi(S) = S$, and if \[\varphi|_S = (\varphi\cup \varphi')|_S\] is freely isotopic to a pseudo-Anosov map with $m$ fixed points, then 
 \[m\leq \dim \hf^\symp(P,\sigma).\] The ffpf condition on $K$, combined with the third item above, then says that \[m\leq  \dim \hf^\symp(P,\sigma) = \dim \hfkhat(K,g-1) - 1 = 0. \] Therefore, $m=0$ as claimed in the proposition.
\end{proof}

\subsection{Zero-surgeries on ffpf knots} Here, we apply our results  from the previous two subsections to prove the main result of this section, Theorem \ref{thm:zero-surgery-fpf}. By Lemma \ref{lem:ffpf-examples},  this theorem immediately implies Theorem \ref{thm:main-lspace-zero}, which says that any two L-space knots are distinguished by their $0$-surgeries.

\begin{theorem} \label{thm:zero-surgery-fpf}
If  $J$ and $ K$ are  ffpf knots such that $S^3_0(J) \cong S^3_0(K)$, then $J=K$.
\end{theorem}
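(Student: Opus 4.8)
The plan is to follow the outline sketched in the introduction, combining Theorem~\ref{thm:rv-fill-monodromy}, Proposition~\ref{prop:pA-fixed-points}, and the rigidity of the Nielsen--Thurston decomposition under the unique fibration of $S^3_0(K)$. First I would fix an ffpf knot $K$ with fiber surface $\Sigma$, monodromy $h$, and Nielsen--Thurston form $(\varphi,\Gamma)$. By Lemma~\ref{lem:hfk-outer-monodromy}, $h$ is veering and $K$ is prime and not a $(\pm1,q)$-cable. I would first treat the outermost map $\varphi_0$. If $\varphi_0$ is periodic, Proposition~\ref{prop:periodic-cable} together with primality and the non-cable condition forces $\Gamma = \emptyset$ and $K$ to be a torus knot; these are well-understood, and in fact $S^3_0(T_{p,q})$ is Seifert fibered with a unique Seifert structure, so its homeomorphism type determines $(p,q)$ and hence $K$. (Alternatively, one checks directly that two positive torus knots with the same $0$-surgery must coincide, e.g.\ via the Alexander polynomial, which $S^3_0(K)$ determines.) So the substantive case is when $\varphi_0$ is pseudo-Anosov.

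In that case, since $\Sigma_0$ has connected boundary it is not a pair of pants, so Theorem~\ref{thm:rv-fill-monodromy} applies: $\hat\Gamma = \Gamma$, and the restriction of $\hat\varphi$ to $\hat\Sigma_0$ is freely isotopic to a pseudo-Anosov map $\hat\varphi_0$ agreeing with $\varphi_0$ on $\Sigma_0$ and fixing exactly one point $p \in D$. The key claim I would then establish is that $p$ is the \emph{unique} fixed point of the entire closed monodromy $\hat\varphi$ (up to free isotopy within its Nielsen--Thurston form, in the relevant sense). Indeed, on $\hat\Sigma_0$ the only candidate is $p$ by Theorem~\ref{thm:rv-fill-monodromy}; on the periodic components of $\hat\Sigma\setminus\hat\Gamma$ there are no isolated fixed points contributing (a periodic component either has no fixed points or is pointwise fixed, and in the latter case it is swept out by a torus of the JSJ decomposition, not meeting the fiber in a useful way — I need to rule out that such a component exists or exhibits the identity, using that $\Gamma$ is minimal and $h$ is veering so $c(h)\neq 0$); and on the pseudo-Anosov components $S$ of $\hat\Sigma\setminus\hat\Gamma = \Sigma\setminus\Gamma$, Proposition~\ref{prop:pA-fixed-points} says $\varphi|_S = \hat\varphi|_S$ is freely isotopic to a fixed-point-free pseudo-Anosov map. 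Thus the center $p$ of the capping disk is canonically distinguished: it is the unique point such that removing the fiber over it from the mapping torus $S^3_0(K)$ of $\hat h$ recovers $S^3\setminus\nu(K)$. I should also record that $S^3_0(K)$ has a \emph{unique} fibration over $S^1$ up to isotopy — this holds because $b_1 = 1$ and, since $K$ is nontrivial, the fiber $\hat\Sigma$ is not a torus (as $g(\hat\Sigma) = g(K) \geq 1$, and genus $\geq 1$ closed fibers give unique fibrations by Thurston--Stallings-type rigidity; the torus case $g=1$ needs a separate word, but $g(K)=1$ ffpf knots like the trefoil have $\hat\Sigma$ of genus one, for which the fibration of the torus bundle is still essentially unique because the monodromy is Anosov, not the identity).

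Now given a second ffpf knot $J$ with closed monodromy $\hat g$ and $S^3_0(J)\cong S^3_0(K)$, I would transport the above: by uniqueness of the fibration, an orientation-preserving diffeomorphism $S^3_0(J)\to S^3_0(K)$ carries the fiber of $\hat g$ to that of $\hat h$ and conjugates $\hat g$ to $\hat h$ up to isotopy; hence the Nielsen--Thurston forms $(\hat\psi,\hat\Gamma_J)$ and $(\hat\varphi,\hat\Gamma)$ are conjugate. By the rigidity of pseudo-Anosov maps within their mapping classes \cite{flp} (Markov partitions / the canonical form is unique up to conjugacy by a homeomorphism isotopic to the identity), the conjugating homeomorphism can be taken to carry $\hat\psi_0$ to $\hat\varphi_0$ exactly, and in particular it carries the unique fixed point $p_J$ of $\hat\psi_0$ in the capping disk of $J$ to the unique such fixed point $p$ of $\hat\varphi_0$. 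Since the complement of the fiber over $p_J$ is $S^3\setminus\nu(J)$ and likewise for $p$ and $K$, we get $S^3\setminus\nu(J)\cong S^3\setminus\nu(K)$ as oriented manifolds, whence $J = K$ by Gordon--Luecke \cite{gordon-luecke-complement}.

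The main obstacle I expect is the uniqueness-of-fixed-point bookkeeping in the pseudo-Anosov case: precisely, showing that after passing to the canonical Nielsen--Thurston representative, $p$ is the only fixed point and that this property is preserved under the conjugation, so that one can genuinely match $p_J$ with $p$. The delicate points are (i) handling periodic pieces of $\hat\Sigma\setminus\hat\Gamma$ — one must argue, using minimality of $\Gamma$ and $c(h)\neq 0$, that no periodic piece is pointwise-fixed in a way that introduces spurious fixed points adjacent to $\hat\Sigma_0$, which is essentially the content already extracted from Theorem~\ref{thm:rv-fill-monodromy}; (ii) the free-isotopy subtlety — Proposition~\ref{prop:pA-fixed-points} gives fixed-point-freeness only after free isotopy on each pseudo-Anosov component, so I must choose the global canonical form $\hat\varphi$ so that it simultaneously realizes the fixed-point-free representatives on the $S_j$ and the single fixed point $p$ on $\hat\Sigma_0$, and check these choices are compatible across the curves of $\hat\Gamma$; and (iii) nailing down uniqueness of the fibration of $S^3_0(K)$ including the genus-one case. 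Each of these is manageable with the tools cited, but assembling them carefully is where the real work lies.
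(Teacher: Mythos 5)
Your overall architecture matches the paper's: reduce to the case where the outermost component is pseudo-Anosov, use Theorem~\ref{thm:rv-fill-monodromy} and Proposition~\ref{prop:pA-fixed-points} to locate a unique fixed point in the capping disk, invoke uniqueness of the fibration ($b_1=1$) to conjugate the closed monodromies, use pseudo-Anosov rigidity to match the fixed points, and finish with Gordon--Luecke. But there is a genuine gap in your treatment of the periodic case. You claim that when $\varphi_0$ is periodic, Proposition~\ref{prop:periodic-cable} ``together with primality and the non-cable condition forces $\Gamma=\emptyset$ and $K$ to be a torus knot.'' Lemma~\ref{lem:hfk-outer-monodromy} only excludes $(\pm1,q)$-cables, not cables in general: an ffpf knot can perfectly well be a $(p,q)$-cable with $|p|\geq 2$ (for instance, suitable cables of L-space knots are L-space knots, hence ffpf). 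The non-$(\pm1,q)$-cable hypothesis is used in the paper only to rule out $\Sigma_0$ being a pair of pants (since that would force $g(T_{p,q})=0$, i.e.\ $p=\pm1$); it does not rule out $\Gamma\neq\emptyset$ with a higher-genus periodic outermost piece. So your argument silently drops the case where both $J$ and $K$ are nontrivial cable knots with diffeomorphic $0$-surgeries.

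That case is not a formality. The paper handles it with Proposition~\ref{prop:cables-different-0-surgery}, whose proof requires: showing that $0$-surgery on the cable pattern in the solid torus does not embed in $S^3$ (Lemma~\ref{lem:pattern-0-homology}), so that it is the unique such JSJ piece of $S^3_0(K)$ and its complement recovers the companion exterior; applying Gordon--Luecke to conclude the companions agree; comparing Seifert invariants to pin down $q$ and $|p|$; and finally using Shinohara's signature formula together with the Casson--Gordon computation of $\sigma(K)$ from $S^3_0(K)$ to exclude the mirror-cabling ambiguity $p \leftrightarrow -p$. None of this is present in your sketch. (Your concerns about periodic pieces adjacent to $\hat\Sigma_0$ and about uniqueness of the fibration in the genus-one case are, by contrast, non-issues: the paper's dichotomy is phrased in terms of whether some pseudo-Anosov component of the \emph{closed} monodromy has a fixed point, and $b_1=1$ alone gives uniqueness of the fibration.) You would also need the torus-knot subcase done carefully -- the Alexander polynomial of $S^3_0(K)$ determines $\{|a|,|b|\}$ but not the chirality, which the paper resolves via the Casson--Gordon signature invariant.
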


We begin by proving some special cases.

\begin{lemma} \label{lem:torus-knots-different-0-surgery}
If $J$ and $K$ are torus knots such that $S^3_0(J) \cong S^3_0(K)$, then $J=K$.
\end{lemma}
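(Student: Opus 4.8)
The plan is to pin down a torus knot by numerical invariants of its zero-surgery that are readily computed from the classical data of the knot. Recall that for a nontrivial torus knot $T_{p,q}$ with $p,q \geq 2$ and $\gcd(p,q)=1$, the genus is $g = (p-1)(q-1)/2$, and $S^3_0(T_{p,q})$ is a Seifert fibered space over $S^2$ with three exceptional fibers of orders $p$, $q$, and $pq$ (this is classical; see e.g.\ Moser). So the first step is: if $S^3_0(J)\cong S^3_0(K)$ with $J = T_{p,q}$ and $K=T_{r,s}$, then the two Seifert fibered structures on this manifold must agree up to the usual ambiguity, because a Seifert fibered rational homology $S^1\times S^2$ of this type has a unique Seifert fibration up to isotopy (by the classification of Seifert fibered spaces, e.g.\ Jankins--Neumann / Orlik, using that the base orbifold is hyperbolic since $1/p+1/q+1/(pq)<1$ for $p,q\geq 2$, excluding small-Seifert-fibered exceptions which one checks directly). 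Hence $\{p,q,pq\} = \{r,s,rs\}$ as multisets of exceptional fiber orders.

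Next I would extract the arithmetic. From $\{p,q,pq\}=\{r,s,rs\}$, the largest element of each multiset is the product of the other two, so $pq = rs$, and then $\{p,q\}=\{r,s\}$, giving $T_{p,q}=T_{r,s}$ (torus knots being unoriented-isotopic iff their parameter pairs agree, and $S^3_0$ determines the knot's genus hence here the pair). One subtlety: the multiset $\{p,q,pq\}$ could have a repeated entry, e.g.\ if $p=2,q=3$ then $\{2,3,6\}$ — still distinct — but one should check there is no coincidence like $q=pq$, impossible for $p\geq 2$; and the degenerate cases $p=1$ or $q=1$ (the unknot) are handled separately since then $S^3_0$ is $S^1\times S^2$, which is not a rational homology sphere issue but is distinguished by $\dim\hfhat$ or simply by not being an L-space-knot surgery. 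Also $T_{p,q}=T_{q,p}$, so no orientation issue arises.

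Alternatively, and perhaps more in keeping with the paper's toolkit, one could avoid Seifert fibered classification entirely: $S^3_0(K)$ for a fibered knot $K$ is the mapping torus of the closed monodromy $\hat h$, and for a torus knot $\hat h$ is periodic (by Proposition~\ref{prop:periodic-cable}, since $\Gamma=\emptyset$ and $\varphi_0$ is periodic). A periodic map is determined up to conjugacy by its rotation data, and the Alexander polynomial $\Delta_{T_{p,q}}(t) = \frac{(t^{pq}-1)(t-1)}{(t^p-1)(t^q-1)}$, recoverable from $H_*$ of the infinite cyclic cover of $S^3_0(K)$, already determines $\{p,q\}$ among torus knots. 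The hardest step is the uniqueness-of-fibration / uniqueness-of-Seifert-structure input: one must be careful that $S^3_0(T_{p,q})$ is not accidentally a small Seifert fibered space or a torus bundle admitting extra fibrations, but the base orbifold condition $1/p+1/q+1/(pq) < 1$ (which holds for all $p,q\geq 2$) rules this out, and the $T_{2,3}$ case can be checked by hand. I expect this verification, together with keeping the bookkeeping of multisets honest, to be the only real content; everything else is arithmetic.
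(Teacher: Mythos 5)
There is a genuine gap: your argument never rules out the possibility that $K$ is the \emph{mirror} of $J$. You set up both knots as $T_{p,q}$, $T_{r,s}$ with all parameters $\geq 2$, which silently restricts to positive torus knots; but the lemma is about arbitrary torus knots, and the hard case is exactly $J = T_{p,q}$ versus $K = T_{-p,q} = \mirror{T_{p,q}}$. Here $S^3_0(\mirror{J}) \cong -S^3_0(J)$, and every invariant you invoke --- the multiset of orders of the exceptional fibers of the Seifert fibration, and the Alexander polynomial in your alternative sketch --- is insensitive to reversing the orientation of the $3$-manifold. So your argument establishes only that $\{p,q\}=\{r,s\}$, i.e.\ that $J$ equals $K$ or its mirror, and then stops; to finish you would need an orientation-sensitive invariant (the full normalized Seifert invariants of the \emph{oriented} manifold would work, but you only use the fiber orders). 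The paper reaches the same intermediate conclusion by a different route --- it extracts $\{|a|,|b|\}$ from the Alexander polynomial of the $0$-surgery --- and then resolves the mirror ambiguity using Casson--Gordon's result that $S^3_0(K)$ determines $\sigma(K)$, whose sign distinguishes positive from negative torus knots. Some step of this kind is the real content of the second half of the lemma and is missing from your proposal.

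A secondary, smaller issue: your justification for uniqueness of the Seifert fibration asserts that the base orbifold $S^2(p,q,pq)$ is hyperbolic because $1/p+1/q+1/(pq)<1$ for all $p,q\geq 2$, but for $(p,q)=(2,3)$ one has $1/2+1/3+1/6=1$, so $S^3_0(T_{2,3})$ is a Euclidean (flat) manifold, not a hyperbolic-base Seifert space. You do flag the trefoil as a case to check by hand, so this is repairable, but the blanket inequality as stated is false.
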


\begin{proof}
Write $J = T_{a,b}$ and $K = T_{c,d}$, and let $A,B,C,D$ denote the absolute values of $a,b,c,d$.  The Alexander polynomial of $S^3_0(J)$ is equal to the Alexander polynomial of $J$, and likewise for $K$, so
\[ \Delta_J(t) = \Delta_{S^3_0(J)}(t) = \Delta_{S^3_0(K)}(t) = \Delta_K(t). \]
Observe that
\[ \Delta_J(t) = t^{-\frac{1}{2}(A-1)(B-1)} \frac{(t^{AB}-1)(t-1)}{(t^{A}-1)(t^{B}-1)} \]
recovers both $AB$ (as the maximal order of one of its roots) and
\[ A+B = AB + 1 - (A-1)(B-1) = AB+1 - \deg \Delta_{J}(t), \]
and therefore uniquely determines the set $\{A,B\}$ as the roots of the polynomial \[x^2-(A+B)x+AB.\]  The same is true for $K$, so $\{A,B\} = \{C,D\}$, and thus $J$ is isotopic to either $K$ or its mirror.

Now, Casson and Gordon \cite[Lemma~3.1]{casson-gordon} proved that $S^3_0(K)$ also determines the signature of $K$, as minus the Casson--Gordon invariant $\sigma_1(S^3_0(K),\chi)$ associated to the unique surjection \[\chi: H_1(S^3_0(K)) \twoheadrightarrow \Z/2\Z,\] so $\sigma(J) = \sigma(K)$.  Note that $\sigma(J)$ is negative if $J$ is a positive torus knot (see for example \cite{rudolph-signature}) and positive otherwise, and likewise for $\sigma(K)$, so $J$ and $K$ are either both positive torus knots or both negative torus knots.  We must therefore have $J=K$ rather than $J=\mirror{K}$, as claimed.
\end{proof}

We will  prove an analogue of Lemma~\ref{lem:torus-knots-different-0-surgery} for cable knots after establishing  the following general fact about satellite knots, cf.\ \cite[Lemma~3.3]{gordon}:

\begin{lemma} \label{lem:pattern-0-homology}
Let $P \subset S^1\times D^2$ be a knot with winding number $w \geq 0$, and let $V_0$ be the result of performing $0$-surgery on $P$ in the solid torus. Then \[H_1(V_0) \cong \Z \oplus (\Z/w\Z).\]  In particular, if $w \neq 1$ then $V_0$ does not embed in $S^3$.
\end{lemma}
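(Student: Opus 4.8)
The plan is to compute $H_1(V_0)$ by a Mayer--Vietoris or direct presentation argument. Write $V = S^1 \times D^2$ and let $P \subset V$ have winding number $w \geq 0$. Decompose $V_0$ as the union of $V \setminus \nu(P)$ and a solid torus $\nu(P)$ glued along $T^2 = \partial\nu(P)$, where the gluing sends the meridian of the new solid torus to the $0$-framed longitude $\lambda_P$ of $P$ (i.e.\ the Seifert-framed longitude, which is nullhomologous in $V \setminus \nu(P)$). First I would record that $H_1(V \setminus \nu(P)) \cong \Z^2$, generated by the core class $[S^1 \times \pt] = t$ and the meridian $\mu_P$, with the relation that $[\lambda_P] = w\cdot t$ in this group (this is exactly what winding number $w$ means: $P$ is homologous to $w$ times the core). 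Then attaching the $0$-framed solid torus kills the class $[\lambda_P] - 0\cdot[\mu_P] = [\lambda_P] = w t$, so
\[
H_1(V_0) \cong \langle t, \mu_P \mid w t = 0\rangle \cong \Z \oplus (\Z/w\Z),
\]
where the free $\Z$ is generated by $\mu_P$. (When $w = 0$ this reads $\Z \oplus \Z$, consistent with the formula.)

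For the final sentence: if $w \neq 1$ then $\Z/w\Z$ is nontrivial, so $H_1(V_0)$ has torsion. But any $3$-manifold with nonempty boundary embedded in $S^3$ has torsion-free first homology: if $V_0 \hookrightarrow S^3$, then by Alexander duality (or the half-lives-half-dies principle applied to $\partial V_0$ inside the closed manifold obtained by capping, together with the fact that $V_0$ is a submanifold of a homology sphere) $H_1(V_0;\Z)$ is torsion-free. Concretely, $V_0$ has the homology of its boundary surface behaving well: I would invoke that a compact orientable $3$-manifold that embeds in $S^3$ has free $H_1$, since $S^3 \setminus V_0$ provides a complementary manifold and Mayer--Vietoris for $S^3 = V_0 \cup (S^3\setminus \inr V_0)$ forces no torsion in $H_1(V_0)$ by an Euler-characteristic/duality count. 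Hence $w \neq 1$ obstructs embedding in $S^3$.

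The only mildly delicate point is justifying the relation $[\lambda_P] = w\cdot t$ in $H_1(V\setminus\nu(P))$ and confirming that the $0$-surgery gluing curve is precisely $\lambda_P$ (with no $\mu_P$ correction); this is just the definition of $0$-surgery using the canonical Seifert longitude of $P$ viewed in $S^3$ — or, intrinsically, the longitude that is nullhomologous in the complement of $P$ inside the solid torus after we record the winding-number relation. I expect the embedding obstruction half to be entirely standard (torsion-free $H_1$ for submanifolds of $S^3$), so the main content is simply the homology computation, which is routine. I would keep the write-up to the Mayer--Vietoris presentation above plus a one-line citation or argument for the torsion-free fact.
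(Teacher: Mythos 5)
Your homology computation is correct and essentially identical to the paper's: both present $H_1$ of the pattern exterior as $\Z^2$ on the meridian $\mu_P$ and the core/longitude class of the ambient solid torus, and both kill $[\lambda_P] = w\cdot t$ under the $0$-framed filling. The point you flag as delicate (that the filling slope is the Seifert longitude and that $[\lambda_P]=wt$) is handled the same way in the paper, which simply defines $\lambda_P$ as the peripheral curve dual to $\mu_P$ with $[\lambda_P]=w[\lambda_C]$.

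There is, however, a genuine gap in your embedding obstruction: the lemma allows $w\geq 0$ and claims non-embedding for all $w\neq 1$, but your argument "if $w\neq 1$ then $\Z/w\Z$ is nontrivial, so $H_1(V_0)$ has torsion" fails when $w=0$, since $\Z/0\Z\cong\Z$ and $H_1(V_0)\cong\Z^2$ is torsion-free. The standard fact you invoke (compact orientable $3$-manifolds embedded in $S^3$ have free $H_1$) is true and disposes of $w\geq 2$, but it says nothing about $w=0$. The paper avoids this by using a different and more uniform observation: $\partial V_0$ is a single torus, and an embedded torus in $S^3$ bounds a solid torus, so an embedded $V_0$ would have to be either a solid torus or a knot complement, forcing $H_1(V_0)\cong\Z$; this contradicts $H_1(V_0)\cong\Z\oplus(\Z/w\Z)$ for every $w\neq 1$ at once. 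You could instead patch the $w=0$ case separately (e.g.\ via half-lives-half-dies, the image of $H_1(\partial V_0;\Q)$ in $H_1(V_0;\Q)$ has rank $1$, which is incompatible with the Mayer--Vietoris surjection $H_1(T^2)\twoheadrightarrow H_1(V_0)\oplus H_1(S^3\setminus \operatorname{int}V_0)$ when $b_1(V_0)=2$), but as written your proof does not establish the stated conclusion for $w=0$.
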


\begin{proof}
Let $E_P = (S^1\times D^2) \setminus \nu(P)$ be the exterior of $P$ in the solid torus.  Define peripheral curves $\mu_C,\lambda_C \subset S^1 \times \partial D^2$ in $\partial E_P$ by
\[\mu_C = \{\pt\} \times \partial D^2, \quad \lambda_C = S^1 \times \{\pt\}.\]
Let $\mu_P \subset \partial \nu(P)$ be a meridian of $P$, so that $[\mu_C] = w[\mu_P]$ in $H_1(E_P)$, and let $\lambda_P \subset \partial \nu(P)$ be the peripheral curve dual to $\mu_P$ such that $[\lambda_P] = w[\lambda_C]$ in $H_1(E_P)$. 

The homology $H_1(E_P)$ is freely generated by $[\mu_P]$ and $[\lambda_C]$. Since $V_0$ is the result of Dehn filling $E_P$ along $\lambda_P$, it follows that $H_1(V_0)$ is the quotient of $H_1(E_P)$ by $[\lambda_P] = w[\lambda_C]$, which is $\Z \oplus (\Z/w\Z)$ with the summands generated by the images of $[\mu_P]$ and $[\lambda_C],$ as claimed.

Now suppose that $w \neq 1$, so that $H_1(V_0) \not\cong \Z$.  Since the boundary of $V_0$ is a torus, any embedding $V_0 \subset S^3$ would identify it as the complement of a knot: the image of the torus $\partial V_0$ must bound a solid torus in $S^3$, say with core circle $\gamma$, so either $V_0$ is the solid torus $\overline{\nu(\gamma)}$ (that is, an unknot complement) or it is the complement $S^3 \setminus \nu(\gamma)$.  But this would mean that $H_1(V_0) \cong \Z$ after all, so no such embedding can exist, completing the proof of the lemma.
\end{proof}

\begin{proposition} \label{prop:cables-different-0-surgery}
Let $J$ and $K$ be cable knots, neither of which is a $(\pm1,q)$-cable for any $q\geq 2$, and suppose that $S^3_0(J) \cong S^3_0(K)$.  Then $J=K$.
\end{proposition}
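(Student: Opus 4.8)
The plan is to describe $S^3_0$ of a cable knot as the companion complement glued to a Seifert fibered piece built from the pattern, to argue that this decomposition is intrinsic to the resulting $3$-manifold, and then to read the companion and the cabling parameters off of it. Write $K = C_{p,q}(C)$ with $q \ge 2$, $\gcd(p,q) = 1$, and $p \ne \pm1$; since $K$ is a cable knot rather than a torus knot (the latter is exactly Lemma~\ref{lem:torus-knots-different-0-surgery}) we may take the companion $C$ to be nontrivial. Fix a solid-torus neighborhood $V$ of $C$ in which $K$ sits as the cable pattern $P$ of winding number $q$, and set $T = \partial V = \partial E_C$ with $E_C = S^3 \setminus \nu(C)$. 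Since $[\lambda_P] = q[\lambda_C] = 0$ in $H_1(S^3 \setminus \nu(K))$, the $0$-framing of $K$ is precisely the slope $\lambda_P$ of Lemma~\ref{lem:pattern-0-homology}, so
\[ Y := S^3_0(K) = W_K \cup_T E_C, \qquad H_1(W_K) \cong \Z \oplus (\Z/q\Z), \]
where $W_K$ is obtained by $0$-surgery on $P$ inside $V$; in particular $q \ge 2$ forces $W_K \not\hookrightarrow S^3$ by Lemma~\ref{lem:pattern-0-homology}, and the same holds for $J = C_{p',q'}(C')$, giving $Y = W_J \cup_{T'} E_{C'}$. Because a knot is recovered from its complement \cite{gordon-luecke-complement}, it suffices to show that the decomposition $Y = W_K \cup_T E_C$ is determined up to isotopy by $Y$ alone: then $E_C \cong E_{C'}$ forces $C = C'$, and it only remains to match $(p,q)$ with $(p',q')$.

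First I would check that $T$ is an incompressible, non--boundary-parallel torus in $Y$. It is incompressible in $E_C$ because $C$ is nontrivial. On the other side, $W_K$ is the cable space $V \setminus \nu(P)$ --- Seifert fibered over an annulus with a single exceptional fiber of order $q$, whose regular fiber on $T$ is the $(p,q)$-curve $p[\mu_C] + q[\lambda_C]$ --- Dehn filled along the $0$-slope of $K$; since the Seifert-fiber slope on $\partial\nu(K)$ is the cabling slope $pq \ne 0$, this filling slope differs from it, so $W_K$ is Seifert fibered over a disk with two exceptional fibers, both of order $\ge 2$. Such a space is irreducible with incompressible torus boundary, so $T$ is incompressible in $W_K$ as well, and it is not boundary-parallel because $Y$ is closed. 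The same applies to $T'$.

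The crux is to show that $T$ and $T'$ induce the same decomposition of $Y$, for which I would pass to the JSJ decomposition of the irreducible manifold $Y$: each of $T,T'$ is isotopic either to a torus of the JSJ collection or to a vertical torus inside a Seifert fibered JSJ piece. In the first case $W_K$ is the union of the JSJ pieces on one side of $T$; every JSJ piece contained in $E_C$ is a piece of a knot complement and hence embeds in $S^3$, whereas $W_K$ does not, so $W_K$ is exactly the union of the JSJ pieces of $Y$ that do not embed in $S^3$, and thus $T$ and the decomposition are canonical. Then on $T = \partial E_C$ the meridian of $C$ is the unique slope with trivial Dehn filling and the Seifert longitude is the unique nullhomologous slope, so these are determined, and in these coordinates the Seifert-fiber slope of $W_K$ on $T$ equals $p/q$, which together with $q\ge 2$ read off from $H_1(W_K)$ and $\gcd(p,q)=1$ recovers $(p,q)$. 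The second case occurs precisely when $C$ is itself a torus knot or a cable --- so $K$ is an iterated torus knot and $Y$ is a graph manifold --- and then $T$ need not be a JSJ torus; here I would recover $K$ by running the same ``companion plus fiber slope'' analysis directly inside the graph-manifold structure of $Y$, using that $J$ too must then be an iterated torus knot and, if necessary, distinguishing the finitely many candidate knots by their Alexander polynomials and Casson--Gordon signatures as in the proof of Lemma~\ref{lem:torus-knots-different-0-surgery}. In every case the data for $K$ and for $J$ are extracted from the single manifold $Y$, so $C = C'$ and $(p,q) = (p',q')$, whence $J = K$.

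I expect the main obstacle to be this second case of the crux --- when the pattern's Seifert fibration extends across the cabling torus, so that $T$ is not canonical as a JSJ torus and one must instead argue with the full graph-manifold structure of $Y$ --- together with the orientation bookkeeping needed to pin down the signs of the cabling parameters and to rule out mirror images, which (as in Lemma~\ref{lem:torus-knots-different-0-surgery}) is where the signature enters.
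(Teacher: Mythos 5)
Your overall strategy is the same as the paper's: decompose $S^3_0(K)$ along the cabling torus into the surgered pattern piece and the companion exterior, use Lemma~\ref{lem:pattern-0-homology} to single out the pattern piece as the one that does not embed in $S^3$, recover the companion via \cite{gordon-luecke-complement}, and then extract the cabling parameters from the Seifert fibered structure, with a signature argument reserved for the sign of $p$. The one genuine gap is exactly the step you flag as the main obstacle: you do not rule out your ``second case,'' in which $T$ is merely a vertical torus inside a Seifert fibered JSJ piece rather than a JSJ torus, and the fallback you offer there (``finitely many candidate iterated torus knots, distinguished by Alexander polynomials and Casson--Gordon signatures'') is not an argument --- neither the finiteness nor the distinguishing power is justified. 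Fortunately the case is vacuous, and for a reason already visible in your own setup: the fiber slope of $W_K$ on $T$ is $p/q$ with $q \geq 2$, hence neither integral nor meridional, whereas the fiber slope on $\partial E_C$ of the outermost JSJ piece of any knot exterior that is Seifert fibered is the cabling slope $ab$ (an integer, if $C$ is a torus knot or a cable) or the meridian (if $C$ is composite). So the two fibrations never match along $T$, the torus $T$ is never redundant, and it is always a JSJ torus of $Y$; your ``first case'' is the only case. This is what the paper is implicitly asserting when it cites \cite[Lemma~7.2]{gordon} and declares $V_{p,q}$ to be a JSJ piece.

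Two smaller points. First, your method for recovering $(p,q)$ --- reading the fiber slope of $W_K$ on $T$ in the canonical $(\mu_C,\lambda_C)$ coordinates determined by the meridian and nullhomologous slope of $E_C$ --- does work and would even capture the sign of $p$ directly, but it requires the orientation bookkeeping you defer: an orientation-preserving homeomorphism of $Y$ restricts to one of the companion exteriors, which can only send $(\mu_C,\lambda_C)$ to $\pm(\mu_{C'},\lambda_{C'})$ simultaneously, so the slope $p/q \in \Q$ is preserved. The paper instead reads off $q$ and $|p|$ from the multiplicities of the two exceptional fibers of $V_{p,q} \cong V_{r,s}$ (using that these fibrations are unique, which is where the hypothesis $p \neq \pm 1$ enters, excluding the twisted $I$-bundle over the Klein bottle), and then fixes the sign of $p$ by combining the Casson--Gordon argument from Lemma~\ref{lem:torus-knots-different-0-surgery} with Shinohara's formula $\sigma(C_{p,q}(C)) = \sigma(T_{p,q})$ or $\sigma(T_{p,q})+\sigma(C)$; if you take the signature route you will need such a satellite formula, not just the torus-knot computation. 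Second, your appeal to Seifert-fiberedness of $W_K$ needs $|pq| \geq 2$ for the filled fiber to be exceptional of order $\geq 2$; this holds since $|p|,q \geq 2$, but it is worth saying.
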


\begin{proof}
We may assume that $J$ is the $(p,q)$-cable of some nontrivial companion knot $C_J$, where $q \geq 2$ is the winding number of the cable and $\gcd(p,q) = 1$.  Let $V_{p,q}$ be the result of $0$-surgery on the cable pattern in the solid torus; this is Seifert fibered over $D^2(q,|pq|)$ with incompressible boundary \cite[Lemma~7.2]{gordon}.

We claim that $V_{p,q}$ forms a piece of the JSJ decomposition of
\[ S^3_0(J) \cong \big( S^3 \setminus \nu(C_J) \big) \cup V_{p,q}. \]
The only way it could fail to do so is if the outermost JSJ piece $W$ of $S^3 \setminus \nu(C_J)$ were also Seifert fibered, with the same fiber slope on $\partial \nu(C_J)$ as the fiber slope on $\partial V_{p,q}$, so that $W \cup V_{p,q}$ is a single Seifert fibered component for $S^3_0(J)$.  But the Seifert fibration on $V_{p,q}$ extends the fibration on the complement of the cable pattern, so in this case $W$ and the complement of the cable pattern would also have formed a single Seifert fibered component of the exterior $S^3 \setminus \nu(J)$, and this is impossible for cable knots in $S^3$.

Lemma~\ref{lem:pattern-0-homology} says that $V_{p,q}$ does not embed in $S^3$, but since $S^3 \setminus \nu(J)$ certainly does, we see that
\begin{itemize}
\item $V_{p,q}$ is the unique JSJ piece of $S^3_0(J)$ that does not embed in $S^3$; and\vspace{1mm}
\item we can uniquely recover $S^3 \setminus \nu(C_J)$ from $S^3_0(J)$ as the complement of this piece.
\end{itemize}
We may similarly assume that $K$ is the $(r,s)$-cable of $C_K$, and let $V_{r,s}$ be the result of $0$-surgery on the corresponding cable pattern; then the same applies to $V_{r,s}$ as a JSJ piece of $S^3_0(K)$.  A homeomorphism $S^3_0(J) \cong S^3_0(K)$ identifies their respective JSJ decompositions, and therefore restricts to a homeomorphism
\[ V_{p,q} \cong V_{r,s} \]
between the unique pieces of either decomposition that do not embed in $S^3$. It similarly restricts to a homeomorphism
\[ S^3 \setminus \nu(C_J) \cong S^3 \setminus \nu(C_K) \]
between the complements of these pieces.  The latter of these tells us that $C_J = C_K$, by \cite{gordon-luecke-complement}.

Since $V_{p,q}$ and $V_{r,s}$ are homeomorphic, and they are Seifert fibered in a unique way over $D^2(q,|pq|)$ and $D^2(s,|rs|),$ respectively, comparing the orders of the singular fibers shows that $q=s$ and $|p|=|r|$.  In other words, if $J$ is the $(p,q)$-cable of $C=C_J$, then we have shown that  $K$ must be the $(\pm p,q)$-cable of the same $C$.  Just as in the proof of Lemma~\ref{lem:torus-knots-different-0-surgery}, we know that if $S^3_0(J) \cong S^3_0(K)$ then $J$ and $K$ have the same signature.  Shinohara \cite[Theorem~9]{shinohara} gave a general formula for the signatures of satellite knots, which tells us that if $J$ and $K$ are the $(p,q)$- and $(-p,q)$-cables of $C$ then we have
\begin{align*}
\sigma(J) &= \begin{cases} \sigma(T_{p,q}), & q \text{ even} \\ \sigma(T_{p,q}) + \sigma(C), & q \text{ odd}, \end{cases} &
\sigma(K) &= \begin{cases} \sigma(T_{-p,q}), & q \text{ even} \\ \sigma(T_{-p,q}) + \sigma(C), & q \text{ odd}. \end{cases}
\end{align*}
In either case, we deduce that $\sigma(T_{p,q}) = \sigma(T_{-p,q})$, and again by \cite{rudolph-signature} this is impossible unless $T_{p,q}$ is unknotted, i.e., $p=\pm 1$.  But we have excluded this last possibility by hypothesis, so $J$ and $K$ must both be the $(p,q)$-cable of $C$ and therefore $J=K$ as claimed.
\end{proof}

We are now ready to prove Theorem~\ref{thm:zero-surgery-fpf}.

\begin{proof}[Proof of Theorem~\ref{thm:zero-surgery-fpf}]
Suppose that $J$ and $ K$ are ffpf knots such that $S^3_0(J) \cong S^3_0(K)$. Since these knots have diffeomorphic $0$-surgeries, they must have the same genus $g \geq 1$ \cite{gabai-foliations3}. Let $\Sigma$ denote their common genus-$g$ fiber surface, and let 
\[ h_J, h_K: \Sigma \to \Sigma \]
be their monodromies.  We will  adopt all of the notation leading up to and used in Theorem \ref{thm:rv-fill-monodromy}. In particular, we let $\hat \Sigma$ be the result of capping off $\Sigma$ with a disk $D$, and denote by 
\[ \hat{h}_J, \hat{h}_K: \hat\Sigma \to \hat\Sigma \] the closed monodromies of $J$ and $K$, whose respective mapping tori are $S^3_0(J)$ and $S^3_0(K)$. Let \[(\varphi^J,\Gamma_J)\, \textrm{ and }\, (\varphi^K,\Gamma_K)\] be the Nielsen--Thurston forms of $h_J$ and $h_K$, and let \[(\hat\varphi^J,\hat\Gamma_J)\, \textrm{ and }\, (\hat\varphi^K,\hat\Gamma_K)\] be the Nielsen--Thurston forms of the closed monodromies. The restrictions of $\varphi^J$ and $\varphi^K$ to the outermost components are freely isotopic to maps 
\[\varphi_0^J:\Sigma_0^J\to \Sigma_0^J \,\,\, \textrm{ and } \,\,\, \varphi_0^K:\Sigma_0^K\to \Sigma_0^K\] which are each either periodic or pseudo-Anosov.

Since $J$ is ffpf, Lemma \ref{lem:hfk-outer-monodromy} says that $J$ is prime, and is not the $(\pm 1,q)$-cable of a nontrivial knot for any $q \geq 2$. It follows that $\Sigma_0^J$ is not a pair of pants. Indeed, if it were then $\varphi_0^J$ would be periodic (the pair of pants does not admit pseudo-Anosov homeomorphisms) with $\Gamma_J \neq \emptyset$. Since $J$ is prime, Proposition \ref{prop:periodic-cable} would then imply that $J$ is a $(p,q)$-cable for some $q\geq 2$, and that $\Sigma_0$ has genus $g(T_{p,q})$. But we would then have that   $g(T_{p,q})$ = 0, and hence that $p = \pm 1$, a contradiction. 

Lemma \ref{lem:hfk-outer-monodromy} further says that the monodromy $h_J$ is veering. We have thus shown that $\varphi_0^J$ and $\Sigma_0^J$ satisfy the hypotheses of Theorem \ref{thm:rv-fill-monodromy}, and the same is true of $\varphi_0^K$ and $\Sigma_0^K$. 

Proposition~\ref{prop:pA-fixed-points} says that no pseudo-Anosov component of $(\varphi_J,\Gamma_J)$ has any fixed points.  When we cap off to form $(\hat\varphi_J,\hat\Gamma_J)$, Theorem~\ref{thm:rv-fill-monodromy} says that the pseudo-Anosov components are unchanged, except when $\varphi^J_0$ is pseudo-Anosov, in which case $\varphi_0^J$ extends to a pseudo-Anosov component  \[\hat\varphi^J_0: \hat\Sigma^J_0 \to \hat\Sigma^J_0\] with a single fixed point $p_J$ in the capping disk $D$ while the other pseudo-Anosov components are unchanged.  Putting these results together, we have that:
\begin{itemize}
\item If no pseudo-Anosov component of $(\hat\varphi^J,\hat\Gamma_J)$ has a fixed point, then $\varphi^J_0$ is periodic.\vspace{1mm}
\item If some pseudo-Anosov component of $(\hat\varphi^J,\hat\Gamma_J)$ has a fixed point $p_J$, then $p_J \in D$ is the unique such fixed point, this component is $\hat\Sigma^J_0$, and the map $\varphi^J_0$ is pseudo-Anosov.
\end{itemize}
The same reasoning applies to $K$ and its associated monodromy data.

Now since $S^3_0(J) \cong S^3_0(K)$ has $b_1 = 1$, this manifold has a unique fibration over $S^1$. It follows that the closed monodromies $\hat{h}_J$ and $\hat{h}_K$ are conjugate up to isotopy: i.e. there is a homeomorphism
\[ \hat{f}: \hat\Sigma \to \hat\Sigma \]
such that $\hat{h}_K$ is isotopic to $\hat{f} \circ \hat{h}_J \circ \hat{f}^{-1}$.   Then their Nielsen--Thurston forms are conjugate by $\hat f$ as well, so either both have a pseudo-Anosov component with a fixed point, or neither one does.

Suppose first that neither $(\hat\varphi^J,\hat\Gamma_J)$ nor $(\hat\varphi^K,\hat\Gamma_K)$ has a pseudo-Anosov component with a fixed point.  Then we have seen that $\varphi^J_0$ and $\varphi^K_0$ are both periodic, so $J$ and $K$ are both either torus knots or cabled, by Proposition \ref{prop:periodic-cable}.  The manifold $S^3_0(J)$ is small Seifert fibered if $J$ is a torus knot \cite{moser}, and it is toroidal if $J$ is a nontrivial cable \cite[Corollary~7.3]{gordon}.  The same is true of $S^3_0(K)$ and $K$, so since $S^3_0(J) \cong S^3_0(K)$ we conclude that $J$ and $K$ are either both torus knots, or both cable knots.  In the first case, Lemma~\ref{lem:torus-knots-different-0-surgery}  tells us that $J=K$; in the second case, Proposition~\ref{prop:cables-different-0-surgery} gives the same conclusion since we know by Lemma \ref{lem:hfk-outer-monodromy} that neither is a $(\pm 1,q)$-cable with $q\geq 2$.

In the remaining case, we know from our discussion above that $(\hat\varphi^J,\hat\Gamma_J)$ and $(\hat\varphi^K,\hat\Gamma_K)$ each have exactly one pseudo-Anosov component containing a fixed point, given by   \[\hat\varphi^J_0:\hat\Sigma_0^J\to\hat\Sigma_0^J\,\,\, \textrm{ and }\,\,\,  \hat\varphi^K_0: \hat\Sigma_0^K\to \hat\Sigma_0^K,\] and furthermore that the unique fixed points  $p_J$ and $p_K$ of these maps lie in the capping disk $D$.   Since $\hat{f}$ preserves the Nielsen--Thurston forms, we conclude from the uniqueness of pseudo-Anosov maps within a mapping class \cite[Expos\'e~12]{flp}, that up to isotopy $\hat{f}$ restricts to a map from $\hat\Sigma^J_0 \to \hat\Sigma^K_0$ which intertwines  $\hat\varphi^J_0$ and $\hat\varphi^K_0$ and in particular sends $p_J$ to $p_K$. 
Since $p_J$ is in the capping disk, it is fixed by $\hat h_J$, which means that this closed monodromy restricts to a homeomorphism of $\hat\Sigma\setminus p_J,$
whose mapping torus is the knot complement $S^3\setminus J$. The same reasoning applies to $\hat h_K$. It follows that  $\hat f$ extends to a homeomorphism between these mapping tori, and therefore between the  complements of $J$ and $K$. We then conclude by \cite{gordon-luecke-complement} that $J=K$ after all.
\end{proof}

\section{Traces and large surgeries} \label{sec:large-surgeries}

Our goal in this section is to show that for any $n \geq 0$, the oriented diffeomorphism type of the $n$-trace $X_n(K)$ determines the $\Z/2\Z$-graded Heegaard Floer homology $\hfhat(S^3_m(K))$ of large integer surgeries on $K$, where $m \gg 0$ (in fact, $m \geq 2g(K)-1$ will suffice).  Specifically, the fact that $\chi(\hfhat(Y)) = |H_1(Y)|$ tells us that for integers $m > 0$ we have
\begin{align*}
\dim \hfhat(S^3_m(K)) &= \dim \hfhat_\geven(S^3_m(K)) + \dim \hfhat_\godd(S^3_m(K)) \\
m = |H_1(S^3_m(K))| &= \dim \hfhat_\geven(S^3_m(K)) - \dim \hfhat_\godd(S^3_m(K)),
\end{align*}
and so
\begin{equation} \label{eq:hfhat-odd-from-hfhat}
\dim \hfhat_\godd(S^3_m(K)) = \frac{ \dim \hfhat(S^3_m(K)) - m }{2}.
\end{equation}
When $m$ is sufficiently large, we have a surgery exact triangle
\[ \dots \to \hfhat(S^3) \xrightarrow{F_{X^\circ_m(K)}} \hfhat(S^3_m(K)) \to \hfhat(S^3_{m+1}(K)) \to \dots \]
in which $F_{X^\circ_m(K)} = 0$ because $X^\circ_m$ violates an adjunction inequality, so then
\[ \dim \hfhat(S^3_{m+1}(K)) = \dim \hfhat(S^3_m(K)) + 1 \]
and the right side of \eqref{eq:hfhat-odd-from-hfhat} is independent of $m \gg 0$.  We will show in Theorem~\ref{thm:trace-large-surgery} that the left side of \eqref{eq:hfhat-odd-from-hfhat} is completely determined by \[\hfhat(\partial X_n(K)) \cong \hfhat(S^3_n(K)),\] together with the various $\spc$ components of the cobordism map
\[ F_{X^\circ_n(K)}: \hfhat(S^3) \to \hfhat(S^3_n(K)). \]
It  will follow as a special case that the $0$-trace detects whether a given knot is an L-space knot.   Combined with Theorem \ref{thm:main-lspace-zero}, this will prove our main result, Theorem \ref{thm:main}, which states that the $0$-trace detects every L-space knot.

\subsection{Topological preliminaries} \label{ssec:topology-setup}

We begin by introducing some cobordisms and other notation which will be needed in the proof.

Given an oriented knot $K \subset S^3$ and an integer $n$, the homology $H_2(X_n(K)) \cong \Z$ is generated by a class $[\hat\Sigma_n]$, where $\hat\Sigma_n$ is constructed by gluing the core of the $n$-framed 2-handle to a Seifert surface for $K$ to produce an oriented surface of self-intersection $n$.  We can remove a smooth ball from the interior of $X_n(K)$ to get a cobordism $X^\circ_n(K): S^3 \to S^3_n(K)$.  We will often omit $K$ from the notation and simply write $X_n$ and $X^\circ_n$ for these compact 4-manifolds.

Suppose that $K$ has genus $g \geq 1$.  Following \cite{bs-fibered-sqp}, we let
\[ \spint_{n,i} \in \spc(X_n) \]
denote the unique $\spc$ structure on $X_n$ satisfying $\langle c_1(\spint_{n,i}), [\hat\Sigma_n] \rangle + n = 2i$, and recall that for $n \geq 0$, the adjunction inequality says that the cobordism map
\[ F_{X^\circ_n,\spint_{n,i}}: \hfhat(S^3) \to \hfhat(S^3_n(K)) \]
can only be nonzero when $|2i-n| + n \leq 2g-2$, or equivalently $1-g+n \leq i \leq g-1$.  In particular, if we write ${\bf 1}$ for the generator of $\hfhat(S^3) \cong \F$, then the elements
\begin{equation} \label{eq:zni}
z_{n,i} = F_{X^\circ_n,\spint_{n,i}}({\bf 1}) \in \hfhat(S^3_n(K))
\end{equation}
can only be nonzero for $1-g+n \leq i \leq g-1$.  We write
\begin{equation} \label{eq:span-z}
\cS_n = \Span\{ z_{n,i} \mid i \in \Z \} \subset \hfhat(S^3_n(K))
\end{equation}
for all $n \geq 0$.

Now for each $n \geq 0$ there is a surgery exact triangle
\begin{equation} \label{eq:surgery-triangle}
\dots \to \hfhat(S^3) \xrightarrow{F_{X^\circ_{n\vphantom{1}}}} \hfhat(S^3_n(K)) \xrightarrow{F_{W^{\vphantom{\circ}}_{n+1}}} \hfhat(S^3_{n+1}(K)) \to \dots,
\end{equation}
where we build $X^\circ_n$ by attaching an $n$-framed $2$-handle along $K$, and then $W_{n+1}$ is built by attaching a $(-1)$-framed $2$-handle along the image (in $S^3_n(K)$) of a meridian of $K$.  We observe by exactness that the cobordism map
\[ F_{W_{n+1}}: \hfhat(S^3_n(K)) \to \hfhat(S^3_{n+1}(K)) \]
is injective whenever $n \geq 2g-1$, since the map $F_{X^\circ_n}$ is then zero: indeed, the surface $\hat\Sigma_n \subset X^\circ_n$ has self-intersection $n > 2g-2$, violating the adjunction inequality.

\subsection{The span of the relative invariants of the $n$-trace}

The following key lemma is the Heegaard Floer analogue of \cite[Proposition~7.1]{bs-lspace}; the proof is essentially the same, but appears simpler since (unlike in framed instanton homology) we can work over $\F=\Z/2\Z$.

\begin{lemma} \label{lem:fw-z}
We have $F_{W_{n+1}}(z_{n,i}) = z_{n+1,i} + z_{n+1,i+1}$ for all $n \geq 0$ and all $i$.
\end{lemma}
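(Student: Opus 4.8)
The strategy is to compare the two composite cobordism maps from $\widehat{\hf}(S^3)$ to $\widehat{\hf}(S^3_{n+1}(K))$ given by going around the surgery triangle \eqref{eq:surgery-triangle}. On one hand, $F_{W_{n+1}} \circ F_{X^\circ_n} = F_{W_{n+1} \circ X^\circ_n}$, and the composite cobordism $W_{n+1} \circ X^\circ_n : S^3 \to S^3_{n+1}(K)$ is (up to a ball) the same as $X^\circ_{n+1}$ together with an extra $2$-handle/$3$-handle pair, or more precisely it is diffeomorphic to $X^\circ_{n+1}(K)$ connect-summed with $\overline{\CP}^2$ — I would use the standard blow-up description (as in \cite{osz-knot} or \cite{bs-lspace}) of the composite of the two surgery cobordisms. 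On the other hand $F_{W_{n+1}}(z_{n,i}) = F_{W_{n+1}}(F_{X^\circ_n, \spint_{n,i}}(\mathbf{1}))$ is the part of this composite map carried by the $\spc$ structures on the composite cobordism restricting to $\spint_{n,i}$ on $X^\circ_n$. So the plan is: first, identify the composite $4$-manifold and its second homology, and show that the $\spc$ structures on $W_{n+1}\circ X^\circ_n$ extending $\spint_{n,i}$ on $X^\circ_n$ are exactly two, distinguished by their evaluation on the exceptional sphere, and that the composite cobordism map in each of these two $\spc$ structures, after capping the $\overline{\CP}^2$ summand, equals $F_{X^\circ_{n+1}, \spint_{n+1,i}}$ and $F_{X^\circ_{n+1}, \spint_{n+1,i+1}}$ respectively. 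Summing over the two $\spc$ structures then yields $F_{W_{n+1}}(z_{n,i}) = z_{n+1,i} + z_{n+1,i+1}$.

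\textbf{Key steps in order.} (1) Describe $W_{n+1} \circ X^\circ_n$ concretely: attaching the $n$-framed handle along $K$ and then the $(-1)$-framed handle along a meridian $\mu$ of $K$ produces, after a handle slide, a $4$-manifold diffeomorphic to $X^\circ_{n+1}(K) \# \overline{\CP}^2$, where the sphere class $E$ with $E\cdot E = -1$ is the exceptional sphere and the generator $[\hat\Sigma_{n+1}]$ of $H_2(X_{n+1})$ appears in $H_2$ of the composite as $[\hat\Sigma_n] - E$ (signs/conventions to be pinned down); this is exactly the picture underlying \cite[Proposition~7.1]{bs-lspace}. (2) Enumerate $\spc$ structures: a $\spc$ structure on the composite restricting to $\spint_{n,i}$ on $X^\circ_n$ is determined by $\langle c_1, E \rangle \in \{+1,-1\}$, and a Chern-class computation shows these two choices restrict on the $X^\circ_{n+1}$ part to $\spint_{n+1,i}$ and $\spint_{n+1,i+1}$. (3) Apply the composition law for cobordism maps in Heegaard Floer homology together with the blow-up formula (the $\overline{\CP}^2$-summand contributes trivially to $\widehat{\hf}$ in the two $\spc$ structures with $\langle c_1, E\rangle = \pm 1$): this identifies $F_{W_{n+1} \circ X^\circ_n, \spint}$ with $F_{X^\circ_{n+1}, \spint'}$ for the corresponding $\spint'$. (4) Sum: $F_{W_{n+1}}(z_{n,i}) = \sum_{\spint} F_{W_{n+1}\circ X^\circ_n, \spint}(\mathbf{1}) = z_{n+1,i} + z_{n+1,i+1}$, where the sum is over the two relevant $\spc$ structures and all others contribute $0$ by the adjunction inequality or because they do not restrict to $\spint_{n,i}$.

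\textbf{Main obstacle.} The genuinely delicate point is step (2)–(3): getting the $\spc$ bookkeeping exactly right — verifying that precisely the two $\spc$ structures with $\langle c_1, E \rangle = \pm 1$ contribute, that they match $\spint_{n+1,i}$ and $\spint_{n+1,i+1}$ under restriction, and that the blow-up formula kills the $\overline{\CP}^2$ factor cleanly in each. One must also be careful that no $\spc$ structure on the composite restricting to $\spint_{n,i}$ gives a nonzero but "extra" term; here the adjunction inequality on $X^\circ_n$ (which forces $z_{n,i} = 0$ outside $1-g+n \le i \le g-1$) and on $X^\circ_{n+1}$ conspire so that the identity $F_{W_{n+1}}(z_{n,i}) = z_{n+1,i} + z_{n+1,i+1}$ holds for \emph{all} $i$, including the boundary cases where some of the three terms vanish. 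Since we work over $\F = \Z/2\Z$ there are no sign subtleties, which is why this is cleaner than the instanton analogue in \cite{bs-lspace}; I would lean on that proof as the template and simply transcribe it into the Heegaard Floer setting.
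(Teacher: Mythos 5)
Your proposal follows essentially the same route as the paper: identify $X^\circ_n \cup W_{n+1}$ with $X^\circ_{n+1}\#\cptwo$ via a handleslide, track $[\hat\Sigma_n]=[\hat\Sigma_{n+1}]-[E]$, apply the composition law, and use the blow-up formula so that only the two $\spc$ structures with $\langle c_1,E\rangle=\pm1$ survive, yielding $z_{n+1,i}+z_{n+1,i+1}$. The one small imprecision is that there are infinitely many $\spc$ structures on the composite restricting to $\spint_{n,i}$ (one for each odd value of $\langle c_1,E\rangle$), not two; but since you correctly invoke the blow-up formula to kill all terms with $\langle c_1,E\rangle\neq\pm1$, this does not affect the argument.
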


\begin{proof}
Arguing as in \cite[Proposition~7.1]{bs-lspace}, we use the fact that the union of 2-handle cobordisms
\[ S^3 \xrightarrow{X^\circ_n} S^3_n(K) \xrightarrow{W_{n+1}} S^3_{n+1}(K) \]
is diffeomorphic to a blown-up $(n+1)$-trace cobordism
\[ S^3 \xrightarrow{X^\circ_{n+1} \# \cptwo} S^3_{n+1}(K), \]
with the diffeomorphism realized by a single handleslide.  If $E \subset X^\circ_{n+1} \# \cptwo$ is the exceptional sphere of the blow-up, then we can orient $E$ so that this diffeomorphism identifies
\[ [\hat\Sigma_n] = [\hat\Sigma_{n+1}] - [E] \]
as elements of $H_2$ of either cobordism.  We then know from \cite[Lemma~7.2]{bs-lspace} that the second homology $H_2(W_{n+1}) \cong \Z$ is generated by a surface $F_{n+1}$ of even self-intersection $-n(n+1)$, such that
\[ [F_{n+1}] = [\hat\Sigma_{n+1}] - (n+1)[E] \]
as elements of $H_2(X^\circ_{n+1} \# \cptwo)$.

Letting $\spint'_{n+1,j} \in \spc(W_{n+1})$ be the unique $\spc$ structure satisfying
\[ \langle c_1(\spint'_{n+1,j}), [F_{n+1}] \rangle = 2j, \]
we use the composition law for cobordism maps to write
\begin{gather} \label{eq:compute-F-z-step1}
\begin{aligned}
F_{W\vphantom{'}_{n+1\vphantom{,j}},\spint'_{n+1,j}}(z_{n,i}) &= F_{W_{n+1},\spint'_{n+1,j}} \circ F_{X^\circ_n,\spint_{n,i}}({\bf 1}) \\
&= \sum_{\substack{\spint \in \spc(X^\circ_n \cup W_{n+1}) \\ \spint|_{X^\circ_n} = \spint_{n,i} \\ \spint|_{W_{n+1}} = \spint'_{n+1,j}}} F_{X^\circ_n \cup W_{n+1}, \spint}( {\bf 1} ) \\
&= \sum_{\substack{\spint \in \spc(X^\circ_{n+1} \# \cptwo) \\ \langle c_1(\spint), [\hat\Sigma_n] \rangle = 2i-n \\ \langle c_1(\spint), [F_{n+1}] \rangle = 2j }} F_{X^\circ_{n+1} \# \cptwo, \spint}( {\bf 1} ).
\end{aligned}
\end{gather}
Each $\spint \in \spc(X^\circ_{n+1} \# \cptwo)$ restricts to $X^\circ_{n+1} \setminus B^4$ as $\spint_{n+1,k}$ for some $k$, and satisfies $\langle c_1(\spint), [E]\rangle = 2\ell - 1$ for some $\ell \in \Z$; the integers $k$ and $\ell$ determine $\spint$ uniquely, so we will write $\spint = \spint_{n+1,k;\ell}$.  Then
\begin{align*}
\langle c_1(\spint_{n+1,k;\ell}), [\hat\Sigma_n] \rangle &= \langle c_1(\spint_{n+1,k;\ell}), [\hat\Sigma_{n+1}] \rangle - \langle c_1(\spint_{n+1,k;\ell}), [E] \rangle  \\
&= (2k - (n+1)) - (2\ell - 1) \\
&= 2k - 2\ell - n,
\end{align*}
while
\begin{align*}
\langle c_1(\spint_{n+1,k;\ell}), [F_{n+1}] \rangle &= \langle c_1(\spint_{n+1,k;\ell}), [\hat\Sigma_{n+1}] \rangle - \langle c_1(\spint_{n+1,k;\ell}), (n+1)[E] \rangle \\
&= (2k - (n+1)) - (n+1)(2\ell-1) \\
&= 2k - 2\ell(n+1).
\end{align*}
Thus if $\spint = \spint_{n+1,k;\ell}$ satisfies $\langle c_1(\spint), [\hat\Sigma_n] \rangle = 2i-n$ and $\langle c_1(\spint), [F_{n+1}] \rangle = 2j$, then $i=k-\ell$ and $j = k-\ell(n+1)$.  This allows us to rewrite \eqref{eq:compute-F-z-step1} as
\begin{equation} \label{eq:Fw-z-big-sum}
F_{W\vphantom{'}_{n+1\vphantom{,j}},\spint'_{n+1,j}}(z_{n,i}) = \sum_{\substack{k,\ell \\ i = k-\ell \\ j = k-\ell(n+1) }} F_{X^\circ_{n+1} \# \cptwo, \spint_{n+1,k;\ell}}( {\bf 1} ).
\end{equation}
The blow-up formula for cobordism maps \cite[Theorem~3.7]{osz-triangles} now tells us that
\[ F_{X^\circ_{n+1}\#\cptwo, \spint_{n+1,k;\ell}} = \begin{cases} F_{X^\circ_{n+1},\spint_{n+1,k}}, & 2\ell-1=\pm1 \\ 0, & \text{otherwise}. \end{cases} \]
Thus in \eqref{eq:Fw-z-big-sum} all terms where $\ell \not\in \{0,1\}$ are zero; when $\ell=0$ the only possibly nonzero terms are those where $i=j=k$, and when $\ell=1$ the nonzero terms satisfy $i=k-1$ and $j=k-(n+1) = i - n$.  We sum \eqref{eq:Fw-z-big-sum} over all $j\in\Z$, breaking the right side into the terms with $\ell=0$ and the terms with $\ell=1$ respectively, to get
\[ F_{W_{n+1}}(z_{n,i}) = F_{X^\circ_{n+1} \# \cptwo, \spint_{n+1,i;0}}( {\bf 1} ) + F_{X^\circ_{n+1} \# \cptwo, \spint_{n+1,i+1;1}}( {\bf 1} ). \]
By the blow-up formula this simplifies to
\begin{align*}
F_{W_{n+1}}(z_{n,i}) &= F_{X^\circ_{n+1}, \spint_{n+1,i}}( {\bf 1} ) + F_{X^\circ_{n+1}, \spint_{n+1,i+1}}( {\bf 1} ) \\
&= z_{n+1,i} + z_{n+1,i+1},
\end{align*}
as claimed.
\end{proof}

\begin{proposition} \label{prop:fw-sn}
The subspaces $\cS_n = \Span\{z_{n,i} \mid i\in\Z\} \subset \hfhat(S^3_n(K))$ of \eqref{eq:span-z} satisfy
\[ \ker\left(F_{W_{n+1}}: \hfhat(S^3_n(K)) \to \hfhat(S^3_{n+1}(K)) \right) \subset \cS_n \]
and
\[ F_{W_{n+1}}(\cS_n) = \cS_{n+1} \]
for each $n \geq 0$, while each $\cS_{n+1}$ has preimage
\[ (F_{W_{n+1}})^{-1}(\cS_{n+1}) = \cS_n. \]
Moreover, if $K$ has genus $g \geq 1$ then $\cS_n = \{ 0 \}$ for all $n \geq 2g-1$.
\end{proposition}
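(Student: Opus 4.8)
The plan is to deduce all four assertions from two inputs already in hand: exactness of the surgery triangle \eqref{eq:surgery-triangle}, and the formula $F_{W_{n+1}}(z_{n,i})=z_{n+1,i}+z_{n+1,i+1}$ from Lemma~\ref{lem:fw-z}. The structural fact I would invoke repeatedly is the adjunction bound from \S\ref{ssec:topology-setup}: $z_{n,i}=0$ unless $1-g+n\le i\le g-1$. In particular each $\cS_n$ is the span of the finitely many nonzero $z_{n,i}$, and when $n\ge 2g-1$ the interval $[1-g+n,g-1]$ is empty, so $\cS_n=\{0\}$; this is the ``moreover'' clause. (If $g=0$ then $K$ is the unknot, every $z_{n,i}$ vanishes, and all four claims are trivial, so assume $g\ge 1$.)

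For the kernel containment $\ker F_{W_{n+1}}\subset\cS_n$: exactness of \eqref{eq:surgery-triangle} at $\hfhat(S^3_n(K))$ identifies $\ker F_{W_{n+1}}$ with $\Img F_{X^\circ_n}$. Since $\hfhat(S^3)\cong\F$ is generated by ${\bf 1}$ and the $\spint_{n,i}$ exhaust $\spc(X^\circ_n)$ (as $H^2(X^\circ_n)$ is torsion-free, so $\spc$ structures are determined by $c_1$), decomposing $F_{X^\circ_n}$ over $\spc$ structures gives $F_{X^\circ_n}({\bf 1})=\sum_i z_{n,i}$, a finite sum lying in $\cS_n$. Hence $\Img F_{X^\circ_n}\subset\cS_n$, which is what we want.

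For the image equality $F_{W_{n+1}}(\cS_n)=\cS_{n+1}$: the inclusion $\subset$ is immediate from Lemma~\ref{lem:fw-z}, since $F_{W_{n+1}}(z_{n,i})=z_{n+1,i}+z_{n+1,i+1}\in\cS_{n+1}$. For the reverse inclusion I would, for each $i$, form the finite sum $w_i=\sum_{j\ge i}z_{n,j}\in\cS_n$ and compute $F_{W_{n+1}}(w_i)=\sum_{j\ge i}(z_{n+1,j}+z_{n+1,j+1})$, which over $\F$ telescopes to $z_{n+1,i}$ (the telescope terminates because $z_{n+1,j}=0$ for $j$ large). Thus each generator $z_{n+1,i}$ of $\cS_{n+1}$ lies in $F_{W_{n+1}}(\cS_n)$, giving $\cS_{n+1}\subset F_{W_{n+1}}(\cS_n)$.

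Finally, $(F_{W_{n+1}})^{-1}(\cS_{n+1})=\cS_n$ follows formally: the containment $\supset$ is just $F_{W_{n+1}}(\cS_n)\subset\cS_{n+1}$, already proved; conversely, if $F_{W_{n+1}}(x)\in\cS_{n+1}$ then there is $y\in\cS_n$ with $F_{W_{n+1}}(y)=F_{W_{n+1}}(x)$ since $F_{W_{n+1}}(\cS_n)=\cS_{n+1}$, so $x-y\in\ker F_{W_{n+1}}\subset\cS_n$ by the kernel containment and hence $x\in\cS_n$. The whole argument is essentially formal once Lemma~\ref{lem:fw-z} and the exact triangle are available; the only point requiring genuine care---and the closest thing to an obstacle---is ensuring the telescoping sum $w_i$ is actually finite and that the telescope closes up, which is precisely where the adjunction bound $z_{n,j}=0$ for $j>g-1$ is used. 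I do not anticipate any serious difficulty beyond that.
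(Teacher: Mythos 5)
Your proposal is correct and follows essentially the same route as the paper: exactness of the surgery triangle for the kernel containment, the relation $F_{W_{n+1}}(z_{n,i})=z_{n+1,i}+z_{n+1,i+1}$ with a telescoping sum for surjectivity onto $\cS_{n+1}$, the formal $x=(x-y)+y$ argument for the preimage, and the adjunction bound for the vanishing when $n\ge 2g-1$. The only (immaterial) difference is that you telescope upward from $i$ using the vanishing $z_{n+1,j}=0$ for $j>g-1$, whereas the paper telescopes up to $i-1$ starting from an index $j$ where $z_{n,j}=0$ already.
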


\begin{proof}
The claim about $\ker(F_{W_{n+1}})$ follows from the exact triangle \eqref{eq:surgery-triangle}, which says that
\[ \ker(F_{W_{n+1}}) = \Img(F_{X^\circ_n}) \]
is spanned by the element
\[ F_{X^\circ_n}({\bf 1}) = \sum_{i \in \Z} F_{X^\circ_n,\spint_{n,i}}({\bf 1}) = \sum_{i\in\Z} z_{n,i}, \]
which evidently belongs to $\cS_n$.

To determine the image of $\cS_n$, we use the relation $F_{W_{n+1}}(z_{n,i}) = z_{n+1,i} + z_{n+1,i+1}$ of Lemma~\ref{lem:fw-z}.  This immediately implies the inclusion
\[ F_{W_{n+1}}(\cS_n) \subset \cS_{n+1}, \]
since $\cS_n$ is spanned by the elements $z_{n,i}$ and $F_{W_{n+1}}(z_{n,i}) \in \cS_{n+1}$.  For the reverse inclusion, it suffices to show that each $z_{n+1,i}$ belongs to $F_{W_{n+1}}(\cS_n)$.  We fix $j < \min(n-g,i)$ so that $z_{n,j} = 0$, whence
\begin{align*}
F_{W_{n+1}}(z_{n,j}+z_{n,j+1}+\dots+z_{n,i-1}) &= \sum_{k=j}^{i-1} (z_{n+1,k}+z_{n+1,k+1}) \\
&= z_{n+1,j} + z_{n+1,i} \\
&= z_{n+1,i}
\end{align*}
and so $z_{n+1,i}$ belongs to the image of $\cS_n$ after all.

In order to show that $(F_{W_{n+1}})^{-1}(\cS_{n+1}) = \cS_n$, we take $x \in \hfhat(S^3_n(K))$ and suppose that $F_{W_{n+1}}(x) \in \cS_{n+1}$.  Since $F_{W_{n+1}}$ sends $\cS_n$ surjectively onto $\cS_{n+1}$, we can find some $z\in\cS_n$ such that
\[ F_{W_{n+1}}(z) = F_{W_{n+1}}(x). \]
But then $x-z$ belongs to $\ker(F_{W_{n+1}}) \subset \cS_n$, hence $x = (x-z)+z$ belongs to $\cS_n$ as well.

Finally, if $n \geq 2g-1 \geq 1$ then we wish to show that $\cS_n=\{0\}$, so it suffices to show that $z_{n,i} = 0$ for all $i$.  But this is an immediate consequence of the adjunction inequality for the cobordism $X^\circ_n$, which contains the surface $\hat\Sigma_n$ of positive self-intersection: if $F_{X^\circ_n,\spint} \neq 0$ then $\spint$ must satisfy
\[ \left| \langle c_1(\spint), [\hat\Sigma_n] \rangle \right| + \underbrace{[\hat\Sigma_n] \cdot [\hat\Sigma_n]}_{=n} \leq \underbrace{2g(\hat\Sigma_n)-2}_{=2g-2} \]
and this is impossible.
\end{proof}

For any two nonnegative integers $n \leq m$, we can consider the composite cobordism
\begin{equation} \label{eq:vmn}
V_{n,m}: S^3_n(K) \xrightarrow{W_{n+1}} S^3_{n+1}(K) \xrightarrow{W_{n+2}} \dots \xrightarrow{W_m} S^3_m(K),
\end{equation}
which we interpret as a product cobordism if $m=n$.  The following is now a generalization of \cite[Proposition~10]{bs-fibered-sqp}, which corresponds to the case $n=0$.

\begin{proposition} \label{prop:ker-vmn}
If $K$ has genus $g \geq 1$ and we take $m \geq \max(n,2g-1)$, then 
\[ F_{V_{n,m}}: \hfhat(S^3_n(K)) \to \hfhat(S^3_m(K)) \]
has kernel equal to the subspace $\cS_n = \Span\{ z_{n,i} \mid i \in \Z \}$.
\end{proposition}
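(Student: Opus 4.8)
The plan is to deduce this directly from Proposition~\ref{prop:fw-sn} by tracking how the subspaces $\cS_k$ behave along the composite cobordism. By the composition law for cobordism maps (and working over $\F$, so that no sign or orientation subtleties arise), $F_{V_{n,m}} = F_{W_m} \circ F_{W_{m-1}} \circ \cdots \circ F_{W_{n+1}}$; when $m = n$ this composite is empty and $V_{n,n}$ is a product cobordism, so $F_{V_{n,n}} = \id$. In that case the hypothesis forces $n \geq 2g-1$, so $\cS_n = \{0\}$ by the last clause of Proposition~\ref{prop:fw-sn}, and the claim holds trivially. So from now on assume $m > n$, hence $m \geq \max(n+1, 2g-1)$.

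First I would check the inclusion $\cS_n \subseteq \ker F_{V_{n,m}}$: iterating the equality $F_{W_{k+1}}(\cS_k) = \cS_{k+1}$ from Proposition~\ref{prop:fw-sn} along the chain $n, n+1, \dots, m$ gives $F_{V_{n,m}}(\cS_n) = \cS_m$, and $\cS_m = \{0\}$ because $m \geq 2g-1$. For the reverse inclusion, take $x \in \ker F_{V_{n,m}}$ and set $x_k = F_{V_{n,k}}(x) = \big(F_{W_k} \circ \cdots \circ F_{W_{n+1}}\big)(x)$ for $n \leq k \leq m$, so that $x_n = x$, $x_m = 0$, and $x_{k+1} = F_{W_{k+1}}(x_k)$. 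I claim $x_k \in \cS_k$ for every such $k$, by downward induction on $k$: the base case is $x_m = 0 \in \cS_m$, and if $x_{k+1} \in \cS_{k+1}$ then $x_k$ lies in $(F_{W_{k+1}})^{-1}(\cS_{k+1}) = \cS_k$ by the preimage statement in Proposition~\ref{prop:fw-sn}. Taking $k=n$ yields $x = x_n \in \cS_n$, so $\ker F_{V_{n,m}} = \cS_n$ as desired.

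I do not expect a serious obstacle here: once Proposition~\ref{prop:fw-sn} is in hand, the argument is essentially bookkeeping along the tower of $(-1)$-framed surgery cobordisms. The two points that require a little care are (i) invoking the composition law in the correct form, which is clean over $\F$, and (ii) confirming that the hypothesis $m \geq \max(n, 2g-1)$ is precisely what makes the argument run — namely $\cS_m = \{0\}$, which both seeds the downward induction and kills $F_{V_{n,m}}(\cS_n)$, together with the product-cobordism base case $m = n$.
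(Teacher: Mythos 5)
Your proof is correct and follows essentially the same route as the paper: the forward inclusion by iterating $F_{W_{k+1}}(\cS_k)=\cS_{k+1}$ down to $\cS_m=\{0\}$, and the reverse inclusion by a downward induction using the preimage statement $(F_{W_{k+1}})^{-1}(\cS_{k+1})=\cS_k$ from Proposition~\ref{prop:fw-sn}, with the product cobordism $V_{n,n}$ handling the base case. The only cosmetic difference is that you isolate $m=n$ up front, whereas the paper folds it into the induction.
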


\begin{proof}
On the one hand we have
\begin{align*}
F_{V_{n,m}}(\cS_n) &= \left(F_{W_m}\circ \dots \circ F_{W_{n+2}} \circ F_{W_{n+1}}\right)(\cS_n) \\
&= \left(F_{W_m}\circ \dots \circ F_{W_{n+2}}\right)(\cS_{n+1}) \\
&= \dots \\
&= \cS_m = 0,
\end{align*}
by repeated application of Proposition~\ref{prop:fw-sn} and the fact that $m \geq 2g-1$.  This shows that $\cS_n$ lies in the kernel of $F_{V_{n,m}}$.

On the other hand, if $F_{V_{n,m}}(x) = 0$ then in particular we have $F_{V_{n,m}}(x) \in \cS_m$, so again we apply Proposition~\ref{prop:fw-sn} repeatedly to say for each $k=m-1,m-2,\dots,n$ that
\[ F_{W_{k+1}} \left( F_{V_{n,k}}(x) \right) = F_{V_{n,k+1}}(x) \in \cS_{k+1} \]
and hence $F_{V_{n,k}}(x) \in \cS_k$.  By the time we get to $k=n$, recalling that $V_{n,n}$ is a product cobordism and so $F_{V_{n,n}}$ is the identity, we have shown that $x = F_{V_{n,n}}(x)$ belongs to $\cS_n$.  Therefore $\ker F_{V_{n,m}}$ is a subset of $\cS_n$, as desired.
\end{proof}

\subsection{The mod 2 grading}

Heegaard Floer homology has  a canonical $\Z/2\Z$ grading, with respect to which $\hfhat(Y)$ has Euler characteristic $|H_1(Y;\Z)|$ if this order is finite, and $0$ otherwise.  We will write the graded pieces as $\hfhat_\geven(Y)$ and $\hfhat_\godd(Y)$.

\begin{lemma} \label{lem:cobordism-degree}
For all $n \geq 0$, we have $\deg(F_{X^\circ_n}) = 1$ and $\deg(F_{W_{n+1}}) = 0$ with respect to the absolute $\Z/2\Z$ gradings on $\hfhat(S^3_n(K))$ and $\hfhat(S^3_{n+1}(K))$.  Therefore, each of the elements $z_{n,i}$ from \eqref{eq:zni} has odd grading.
\end{lemma}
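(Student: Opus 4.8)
The plan is to obtain both degree statements from the grading‑shift formula for cobordism maps in Heegaard Floer homology and then deduce the assertion about the $z_{n,i}$ directly. Recall that for a connected cobordism $W\colon Y_1\to Y_2$ and $\spinc\in\spc(W)$, the map $F_{W,\spinc}$ is homogeneous with respect to the canonical $\Z/2\Z$‑gradings, of degree
\[ \frac{\chi(W)+\sigma(W)+b_1(Y_2)-b_1(Y_1)}{2}\pmod 2. \]
This parity depends only on $\chi(W)$, $\sigma(W)$, and the first Betti numbers of $\partial W$; in particular it is independent of $\spinc$, since changing $\spinc$ by the Poincar\'e dual of a class in $H_2(W)$ changes $c_1(\spinc)^2$ by a multiple of $8$ and hence the degree by an even integer. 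Granting this, the last sentence of the lemma is immediate: $\mathbf 1\in\hfhat(S^3)\cong\F$ lies in even grading and $z_{n,i}=F_{X^\circ_n,\spint_{n,i}}(\mathbf 1)$ by \eqref{eq:zni}, so once $\deg F_{X^\circ_n}=1$ is known we get $z_{n,i}\in\hfhat_\godd(S^3_n(K))$.

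Next I would simply plug in the relevant invariants, all recorded in \S\ref{ssec:topology-setup}. Deleting an open ball from $B^4$ with one $2$‑handle attached gives $\chi(X^\circ_n)=1$, and $H_2(X^\circ_n)=\Z\langle[\hat\Sigma_n]\rangle$ with $[\hat\Sigma_n]^2=n\ge 0$, so $\sigma(X^\circ_n)=1$ for $n\ge1$ and $\sigma(X^\circ_0)=0$. Likewise $W_{n+1}$ is a single $2$‑handle attachment, so $\chi(W_{n+1})=1$, while by \cite[Lemma~7.2]{bs-lspace} $H_2(W_{n+1})=\Z\langle[F_{n+1}]\rangle$ with $[F_{n+1}]^2=-n(n+1)\le0$, so $\sigma(W_{n+1})=-1$ for $n\ge1$ and $\sigma(W_1)=0$. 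Finally $b_1(S^3)=0$, $b_1(S^3_m(K))=0$ for $m\ge1$, and $b_1(S^3_0(K))=1$. Substituting these into the formula in each of the four cases --- $X^\circ_n$ and $W_{n+1}$, for $n\ge1$ and for $n=0$ --- yields $\deg F_{X^\circ_n}=1$ and $\deg F_{W_{n+1}}=0$ uniformly, which finishes the proof.

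The step I expect to require the most care is the case $n=0$, where $S^3_0(K)$ has $b_1=1$: the grading‑shift formula is usually stated only for cobordisms between rational homology spheres, where it reduces to $\tfrac12(\chi(W)+\sigma(W))\bmod 2$, and when $b_1>0$ the $\Z/2\Z$‑grading is genuinely not visible through any absolute $\Q$‑grading, so the $b_1$‑term cannot be dropped. I see two ways to handle this. One is to invoke the refinement of the grading formula that permits boundary with $b_1>0$. The other is to stay inside the rational homology sphere world: since the mod‑$2$ degree depends only on $\chi$, $\sigma$, and the boundary Betti numbers, the cobordisms $X^\circ_0(K)\colon S^3\to S^3_0(K)$ and $W_1(K)\colon S^3_0(K)\to S^3_1(K)$ have the same degrees as those for the unknot, namely $X^\circ_0(U)\colon S^3\to S^1\times S^2$ and $W_1(U)\colon S^1\times S^2\to S^3$, for which $\hfhat$ is completely understood: $F_{X^\circ_0(U)}$ carries the generator of $\hfhat(S^3)$ to the odd‑graded generator of $\hfhat(S^1\times S^2)$, and the surgery exact triangle then forces $F_{W_1(U)}$ to be a degree‑$0$ surjection onto $\hfhat(S^3)$. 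As a consistency check one can also use Lemma~\ref{lem:fw-z}: the composite $X^\circ_0\cup W_1$ is diffeomorphic to $X^\circ_1\#\cptwo$, on which the blow‑up formula \cite[Theorem~3.7]{osz-triangles} preserves $\Z/2\Z$‑degree in the relevant $\spc$ structures, so the composition law gives $\deg F_{X^\circ_0}+\deg F_{W_1}=\deg F_{X^\circ_1}=1$.
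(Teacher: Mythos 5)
Your proposal is correct and follows essentially the same route as the paper: both rest on the mod~$2$ degree formula $\deg(F_{Z,\spint}) \equiv \tfrac12(\chi(Z)+\sigma(Z)+b_1(Y_{\mathrm{out}})-b_1(Y_{\mathrm{in}}))$, valid for boundaries with $b_1>0$ (the paper justifies this in $\hfhat$ via Huang--Ramos's lift of the plane-field grading from $\HMto$), and your explicit computations of $\chi$, $\sigma$, and $b_1$ in all four cases reproduce exactly the parity count that the paper instead cites from \cite[\S 42.3]{km-book}.
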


\begin{proof}
A $\spc$ cobordism $(Z,\spint): (Y_0,\spint|_{Y_0}) \to (Y_1,\spint|_{Y_1})$ between connected, nonempty 3-manifolds induces a cobordism map on $\hfhat$ that shifts the absolute $\Z/2\Z$ grading by
\[ \deg(F_{Z,\spint}) \equiv \frac{1}{2}(\chi(Z)+\sigma(Z) + b_1(Y_1)-b_1(Y_0)) \pmod{2}. \]
This is discussed in detail for monopole Floer homology in \cite{km-book}: see \cite[\S22.4]{km-book} for the reduction of the canonical plane field grading on $\HMto(Y)$ to a canonical $\Z/2\Z$ grading, and \cite[\S25.4]{km-book} for the computation of the mod 2 degrees of cobordism maps.  In \cite[\S42.3]{km-book} this is applied to the surgery exact triangle \eqref{eq:surgery-triangle}, and the authors show that for $n\geq 0$ the map $F_{X^\circ_n}$ has odd degree while $F_{W_{n+1}}$ and the connecting homomorphism both have even degree.

In Heegaard Floer homology, Huang--Ramos \cite{huang-ramos} lifted the relative $\Z/d(c_1(\spinc))\Z$ grading on $\hf^\circ(Y,\spinc)$ (itself a lift of the absolute $\Z/2\Z$ grading) to an analogous plane field grading, and showed that this lift is compatible with the degree formula for cobordism maps, so the same formula used for $\deg(F_{Z,\spint}) \pmod{2}$ in $\HMto$ also applies to $\hfhat$.  In particular, since $\hfhat(S^3) \cong \F$ is supported in even grading, it follows that the elements $z_{n,i} = F_{X^\circ_n,\spint_{n,i}}({\bf 1})$ have odd grading.
\end{proof}

Lemma~\ref{lem:cobordism-degree} tells us that the subspaces of each $\hfhat(S^3_n(K))$ spanned by the relative invariants $z_{n,i}$ are also entirely in odd grading, i.e.,
\[ \cS_n \subset \hfhat_\godd(S^3_n(K)) \]
for all $n \geq 0$.  We can also use it to deduce the following:

\begin{proposition} \label{prop:large-odd-span}
Suppose that $K$ has genus $g \geq 1$, and fix nonnegative integers $n \leq m$ with $m \geq 2g-1$.  Then $\cS_n = \Span \{z_{n,i} \mid i\in\Z\}$ fits into a short exact sequence
\[ 0 \to \cS_n \to \hfhat_\godd(S^3_n(K)) \xrightarrow{F_{V_{n,m}}} \hfhat_\godd(S^3_m(K)) \to 0, \]
where $V_{n,m}$ is the cobordism $S^3_n(K) \to S^3_m(K)$ of \eqref{eq:vmn}.
\end{proposition}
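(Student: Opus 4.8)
The plan is to read off the short exact sequence from Proposition~\ref{prop:ker-vmn} together with the mod~2 grading computations of Lemma~\ref{lem:cobordism-degree}. First I would note that $V_{n,m}$ is the composite of $W_{n+1},\dots,W_m$, so $F_{V_{n,m}} = F_{W_m}\circ\dots\circ F_{W_{n+1}}$; since each $F_{W_{k+1}}$ has even degree by Lemma~\ref{lem:cobordism-degree}, so does $F_{V_{n,m}}$, and it therefore restricts to a map $\hfhat_\godd(S^3_n(K)) \to \hfhat_\godd(S^3_m(K))$. Exactness at the left two spots of the claimed sequence is then immediate: Proposition~\ref{prop:ker-vmn} applies because $m \geq \max(n,2g-1)$ and gives $\ker F_{V_{n,m}} = \cS_n$, and since $\cS_n \subseteq \hfhat_\godd(S^3_n(K))$ by Lemma~\ref{lem:cobordism-degree}, the kernel of the restricted map is exactly $\cS_n$.

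The substantive point is surjectivity onto $\hfhat_\godd(S^3_m(K))$, and I would obtain it by showing that every single map $F_{W_{k+1}}\colon \hfhat_\godd(S^3_k(K)) \to \hfhat_\godd(S^3_{k+1}(K))$ with $n \leq k \leq m-1$ is already surjective, so that $F_{V_{n,m}}$ restricted to odd gradings is a composite of surjections. For this I would use the surgery exact triangle \eqref{eq:surgery-triangle}, extended one step further to include the connecting homomorphism $\hfhat(S^3_{k+1}(K)) \to \hfhat(S^3)$, which has even degree --- this is exactly the statement about \eqref{eq:surgery-triangle} recorded, following \cite{km-book}, in the proof of Lemma~\ref{lem:cobordism-degree}. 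Because $\hfhat(S^3)\cong\F$ lies entirely in even grading, this connecting map annihilates $\hfhat_\godd(S^3_{k+1}(K))$, whence $\hfhat_\godd(S^3_{k+1}(K)) \subseteq \ker(\text{connecting map}) = \Img F_{W_{k+1}}$. As $F_{W_{k+1}}$ is grading-preserving, its image is a graded subspace whose odd part equals $F_{W_{k+1}}(\hfhat_\godd(S^3_k(K)))$; combined with the previous inclusion, this forces $F_{W_{k+1}}(\hfhat_\godd(S^3_k(K))) = \hfhat_\godd(S^3_{k+1}(K))$, as needed.

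I do not anticipate a genuine obstacle: the argument is essentially bookkeeping with the two ingredients already in hand, namely the kernel identification of Proposition~\ref{prop:ker-vmn} and the degree count of Lemma~\ref{lem:cobordism-degree}. The one place to be a little careful is the surjectivity step, where one must track the parities of all three maps in the surgery triangle simultaneously and exploit that $\hfhat(S^3)$ is concentrated in even grading; once the connecting map is known to preserve the mod~2 grading, restricting the triangle to odd grading collapses it to the desired short exact sequence of odd-graded pieces at each stage.
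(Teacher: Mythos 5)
Your proposal is correct and follows essentially the same route as the paper: it combines the kernel identification $\ker F_{V_{n,m}} = \cS_n$ from Proposition~\ref{prop:ker-vmn} with the observation that the connecting map in the surgery triangle has even degree and target $\hfhat(S^3)\cong\F$ concentrated in even grading, which forces each $F_{W_{k+1}}$ to be surjective on $\hfhat_\godd$. No gaps.
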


\begin{proof}
For any $k \geq 0$, the exact triangle \eqref{eq:surgery-triangle} has the form
\[ \dots \to \hfhat(S^3) \xrightarrow{F_{X^\circ_k}} \hfhat(S^3_k(K)) \xrightarrow{F_{W_{k+1}}} \hfhat(S^3_{k+1}(K)) \xrightarrow{\delta} \dots, \]
and we recall from \cite[\S42.3]{km-book} (as noted in the proof of Lemma~\ref{lem:cobordism-degree}) that $\delta$ has even degree.  Since its codomain $\hfhat(S^3) \cong \F$ is supported in even grading, it follows that
\[ \delta\left(\hfhat_\godd(S^3_{k+1}(K))\right) = 0 \]
and hence by exactness that $\hfhat_\godd(S^3_{k+1}(K))$ lies in the image of $F_{W_{k+1}}$.  But then $F_{W_{k+1}}$ also has even degree, so $F_{W_{k+1}}$ must restrict to a surjection
\[ F_{W_{k+1}}: \hfhat_\godd(S^3_k(K)) \to \hfhat_\godd(S^3_{k+1}(K)) \]
for all $k \geq 0$.

To conclude, we observe that $F_{V_{n,m}}$ is equal to the composition
\[ F_{W_m} \circ F_{W_{m-1}} \circ \dots \circ F_{W_{n+1}} \]
of maps which are all surjective on $\hfhat_\godd$, and hence $F_{V_{n,m}}$ is surjective on $\hfhat_\godd$ as well.  Since $m \geq 2g-1$, we use Proposition~\ref{prop:ker-vmn} to say that the kernel of $F_{V_{n,m}}$ is equal to $\cS_n$.
\end{proof}

The main result of this section is now an immediate corollary:

\begin{theorem} \label{thm:trace-large-surgery}
Fix a nontrivial knot $K \subset S^3$ and an integer $n\geq 0$.  Then for all integers $m \gg 0$, the dimension of $\hfhat_\godd(S^3_m(K))$ is equal to
\[ \dim \hfhat_\godd(\partial X_n(K)) - \dim \Span \{ F_{X^\circ_n,\spint}({\bf 1}) \mid \spint \in \spc(X^\circ_n) \}. \]
In particular, $K$ is an L-space knot if and only if the elements $\{ F_{X^\circ_n,\spint}({\bf 1}) \}$ span all of $\hfhat_\godd(S^3_n(K))$.
\end{theorem}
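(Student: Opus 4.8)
The plan is to read the theorem off from Proposition~\ref{prop:large-odd-span}, which already contains all of the substance. Since $K$ is nontrivial it has genus $g := g(K)\geq 1$, so for every $m\geq\max(n,2g-1)$ that proposition supplies a short exact sequence
\[ 0 \to \cS_n \to \hfhat_\godd(S^3_n(K)) \xrightarrow{F_{V_{n,m}}} \hfhat_\godd(S^3_m(K)) \to 0, \]
and hence $\dim\hfhat_\godd(S^3_m(K)) = \dim\hfhat_\godd(S^3_n(K)) - \dim\cS_n$, which in particular is independent of $m$ once $m\gg0$. Everything else is bookkeeping to match this with the notation in the statement.

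First I would verify that $\{\spint_{n,i}\mid i\in\Z\}$ exhausts $\spc(X^\circ_n)$. Removing a ball does not change second cohomology, so $\spc(X^\circ_n)\cong\spc(X_n)$ is a torsor over $H^2(X_n;\Z)\cong\Z$; since $X_n$ is simply connected with $H_2(X_n)\cong\Z\langle[\hat\Sigma_n]\rangle$ and $[\hat\Sigma_n]\cdot[\hat\Sigma_n]=n$, a $\spc$ structure on $X_n$ is determined by $\langle c_1(\cdot),[\hat\Sigma_n]\rangle$, which ranges over exactly the integers congruent to $n$ mod $2$ --- precisely the values $2i-n$ realized by the $\spint_{n,i}$. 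Therefore
\[ \Span\{ F_{X^\circ_n,\spint}({\bf 1}) \mid \spint\in\spc(X^\circ_n) \} = \Span\{ z_{n,i} \mid i\in\Z \} = \cS_n, \]
and since $\partial X_n(K) = S^3_n(K)$, the dimension count above is exactly the first assertion of the theorem; note that both terms on its right-hand side depend only on $\hfhat(\partial X_n(K))$ and the cobordism maps $F_{X^\circ_n,\spint}$.

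For the ``in particular'' statement I would argue as follows. By Lemma~\ref{lem:cobordism-degree} we always have $\cS_n\subseteq\hfhat_\godd(S^3_n(K))$, so the elements $\{F_{X^\circ_n,\spint}({\bf 1})\}$ span all of $\hfhat_\godd(S^3_n(K))$ if and only if $\dim\cS_n=\dim\hfhat_\godd(\partial X_n(K))$, which by the displayed formula is equivalent to $\hfhat_\godd(S^3_m(K))=0$ for $m\gg0$. By \eqref{eq:hfhat-odd-from-hfhat} this says $\dim\hfhat(S^3_m(K))=m=|H_1(S^3_m(K))|$ for all large $m$, i.e.\ that $S^3_m(K)$ is an L-space for all $m\gg0$; and since $K$ is an L-space knot precisely when $S^3_m(K)$ is an L-space for all sufficiently large $m$ (this follows from the definition together with the Ozsv\'ath--Szab\'o theorem that an L-space knot has L-space surgeries along every slope $\geq 2g-1$), this is equivalent to $K$ being an L-space knot. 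I do not anticipate any real obstacle: the work is in Proposition~\ref{prop:large-odd-span}, and the only point to keep straight is that ``$m\gg0$'' must be interpreted as ``$m\geq\max(n,2g(K)-1)$'' so that both that proposition and the large-surgery characterization of L-space knots are in force.
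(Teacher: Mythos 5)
Your proposal is correct and follows essentially the same route as the paper: the theorem is read off directly from the short exact sequence of Proposition~\ref{prop:large-odd-span}, with the identification $\Span\{F_{X^\circ_n,\spint}(\mathbf{1})\} = \cS_n$ and the equivalence (via \eqref{eq:hfhat-odd-from-hfhat} and the large-surgery characterization of L-space knots) handled exactly as in the surrounding discussion in the paper. Your explicit verification that the $\spint_{n,i}$ exhaust $\spc(X^\circ_n)$ is a detail the paper leaves implicit, but it is accurate and harmless.
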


\begin{proof}
The short exact sequence of Proposition~\ref{prop:large-odd-span} tells us that for all $m \geq \max(2g-1,n)$, the dimension of $\hfhat_\godd(S^3_m(K))$ is equal to
\[ \dim \hfhat_\godd(S^3_n(K)) - \dim \Span\{z_{n,i} \mid i\in\Z\}. \]
But this is exactly the quantity we wanted to prove equal to $\dim \hfhat_\godd(S^3_m(K))$.
\end{proof}

\begin{remark}
The proof of Theorem~\ref{thm:trace-large-surgery} applies equally well to show that $X_n(K)$ determines the odd portion of the $\Z/2\Z$-graded framed instanton homology $I^\#(S^3_m(K))$ for $m \gg 0$; in particular, the case where $I^\#_\godd(S^3_m(K)) = 0$ says that for any fixed $n\geq 0$, the $n$-trace $X_n(K)$ determines whether or not $K$ is an instanton L-space knot.  Indeed, the key Lemma~\ref{lem:fw-z} that we used to start the proof can be replaced by the original \cite[Proposition~7.1]{bs-lspace}, and then after some mild care with coefficients the rest of the argument follows in the same way.
\end{remark}

\begin{remark}
The case $n=0$ of Theorem~\ref{thm:trace-large-surgery} was used in \cite{bs-fibered-sqp} to prove that L-space knots are fibered and strongly quasipositive, as follows: we know (at least for $g\geq2$) that $\hfhat(S^3_0(K),\spinc_{g-1})$ is nonzero \cite{osz-genus}, and its Euler characteristic is zero, so the portions in even and odd gradings are both nonzero.  Theorem~\ref{thm:trace-large-surgery} says that if $K$ is an L-space knot then $\hfhat_\godd(S^3_0(K),\spinc_{g-1}) \neq 0$ must be spanned by $z_{0,g-1}=F_{X^\circ_0,\spint_{0,g-1}}({\bf 1})$; but then $\hfhat_\godd(S^3_0(K),\spinc_{g-1}) \cong \F$, and fiberedness follows.  Then $F_{X^\circ_0,\spint_{0,g-1}}({\bf 1}) \neq 0$ ultimately implies (via the Heegaard Floer contact invariant) that the open book for $S^3$ with binding $K$ supports the tight contact structure on $S^3$, and so $K$ must be strongly quasipositive.
\end{remark}

\subsection{Zero-traces of L-space knots}

We conclude this section by proving our main result, Theorem~\ref{thm:main}, which asserts that every L-space knot is detected by its $0$-trace. 

\begin{proof}[Proof of Theorem~\ref{thm:main}]
Suppose that $K$ is an L-space knot, and that $X_0(J) \cong X_0(K)$.  Then since Theorem~\ref{thm:trace-large-surgery} says that the diffeomorphism type of the 0-trace determines whether or not a given knot is an L-space knot, we see that $J$ must be an L-space knot as well.  Now both $J$ and $K$ are ffpf by Lemma~\ref{lem:ffpf-examples}, and comparing the boundaries of the $0$-traces tells us that $S^3_0(J) \cong S^3_0(K)$, so we conclude by Theorem~\ref{thm:zero-surgery-fpf} that $J=K$.
\end{proof}

The following can also be proved by exactly the same argument.

\begin{theorem} \label{thm:almost-l-space-zero}
If $K$ is an almost L-space knot of genus at least $3$, then $X_0(K)$ detects $K$.
\end{theorem}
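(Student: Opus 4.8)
The plan is to mimic the proof of Theorem~\ref{thm:main} line for line, replacing ``L-space knot'' with ``almost L-space knot of genus at least $3$'' and substituting the appropriate detection and rigidity inputs. Suppose $K$ is an almost L-space knot with $g(K) \geq 3$, and suppose $X_0(J) \cong X_0(K)$ for some knot $J$.

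First I would extract the genus and the relevant Heegaard Floer information from the trace. Comparing boundaries gives $S^3_0(J) \cong S^3_0(K)$, so by \cite{gabai-foliations3} the knots $J$ and $K$ have the same genus $g \geq 3$; in particular $J$ is nontrivial. Next, Theorem~\ref{thm:trace-large-surgery} says that the diffeomorphism type of $X_0(K)$ determines $\dim \hfhat_\godd(S^3_m(K))$ for all $m \gg 0$, and hence $\dim \hfhat(S^3_m(K))$ for all $m \gg 0$ via \eqref{eq:hfhat-odd-from-hfhat} together with $\dim \hfhat_\geven - \dim \hfhat_\godd = |H_1|$. Since $K$ is an almost L-space knot we have $\dim \hfhat(S^3_m(K)) = m+2$ for $m \gg 0$, so the same holds with $K$ replaced by $J$, i.e.\ $J$ is also an almost L-space knot of genus $g \geq 3$.

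Now both $J$ and $K$ are ffpf by Lemma~\ref{lem:ffpf-examples}, which covers precisely the case of almost L-space knots of genus at least $3$. Since $S^3_0(J) \cong S^3_0(K)$, Theorem~\ref{thm:zero-surgery-fpf} applies and yields $J = K$, which is exactly the assertion that $X_0(K)$ detects $K$.

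The only real subtlety, and the step I would be most careful with, is the passage from ``$X_0$ determines $\dim \hfhat_\godd(S^3_m(K))$ for $m \gg 0$'' to ``$J$ is an almost L-space knot.'' Theorem~\ref{thm:trace-large-surgery} controls the odd part of the stable dimension, and one must combine this with the Euler characteristic identity and the surgery exact triangle (in which $F_{X^\circ_m} = 0$ for $m$ large, so $\dim \hfhat(S^3_{m+1}(K)) = \dim \hfhat(S^3_m(K)) + 1$) to see that $\dim \hfhat(S^3_m(K)) - m$ stabilizes to a constant that is a diffeomorphism invariant of $X_0(K)$; this constant is $2$ exactly when the knot is an almost L-space knot. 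Once that identification is in hand, the rest is a direct citation of the ffpf machinery, just as in the L-space case, so no new ideas are required beyond observing that the hypotheses of Lemma~\ref{lem:ffpf-examples} and Theorem~\ref{thm:zero-surgery-fpf} are met.
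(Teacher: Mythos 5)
Your proposal is correct and is exactly the argument the paper intends: the paper proves this theorem by the remark that it follows "by exactly the same argument" as Theorem~\ref{thm:main}, namely using Theorem~\ref{thm:trace-large-surgery} (via \eqref{eq:hfhat-odd-from-hfhat}) to see that the $0$-trace detects the almost L-space condition, Gabai's theorem to match genera, Lemma~\ref{lem:ffpf-examples} to conclude both knots are ffpf, and Theorem~\ref{thm:zero-surgery-fpf} to finish. The subtlety you flag about converting the odd-graded stable dimension into the almost L-space condition is handled exactly as you describe.
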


\section{Traces and rational surgeries} \label{sec:rational-surgeries}
The goal of this section is to prove in Theorem~\ref{thm:trace-determines-hfhat-restated} that any trace $X_n(K)$  determines the Heegaard Floer homology of many or all rational surgeries on $K$; this quickly implies Theorem~\ref{thm:lspace-detection-main} as well. For this, we combine the results of the previous section with work of Hayden--Mark--Piccirillo, who proved in \cite[Theorem~1.4]{hayden-mark-piccirillo} that for any $n \geq 0$, the $n$-trace of a knot $K\subset S^3$ detects the Heegaard Floer $\nu$ invariant of $K$ in the following sense:

\begin{theorem} \label{thm:hmp-nu}
Suppose for two knots $J,K \subset S^3$ and some integer $n \in \Z$ that  $X_n(J) \cong X_n(K)$.  Then $\nu(J) = \nu(K)$, except possibly if $n<0$ and $\{\nu(J),\nu(K)\} = \{0,1\}$.
\end{theorem}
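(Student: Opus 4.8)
The plan is to extract $\nu(K)$ from invariants that the oriented diffeomorphism type of $X_n(K)$ already determines. The key object is the \emph{reversed} two-handle cobordism $\overline{X}^\circ_n(K)\colon S^3_n(K)\to S^3$, obtained simply by running $X^\circ_n(K)$ backwards. Since any two smoothly embedded $4$-balls in the interior of a connected $4$-manifold are ambiently isotopic, $X^\circ_n(K)=X_n(K)\setminus\inr(B^4)$ is well defined up to diffeomorphism of cobordisms, hence so is its reverse; so a diffeomorphism $X_n(J)\cong X_n(K)$ induces a diffeomorphism $\overline{X}^\circ_n(J)\cong\overline{X}^\circ_n(K)$ and an isomorphism $\hfhat(S^3_n(J))\cong\hfhat(S^3_n(K))$ intertwining the $\spc$-refined cobordism maps
\[ F_{\overline{X}^\circ_n,\,\spint}\colon\hfhat(S^3_n(K))\longrightarrow\hfhat(S^3)\cong\F. \]
Because $H_2(X_n(K))\cong\Z$ is generated by $[\hat\Sigma_n]$ with $[\hat\Sigma_n]^2=n$, the diffeomorphism acts on $H_2$ by $\pm1$, so it matches up the two families of cobordism maps either directly or after the single involution of $\spc(\overline{X}^\circ_n)\cong\Z$ coming from $[\hat\Sigma_n]\mapsto-[\hat\Sigma_n]$.

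For $n\ge 0$ the next step is to identify these maps with the ones defining $\nu$. By the Ozsv\'ath--Szab\'o surgery formula, when $n$ is large there is an identification $\hfhat(S^3_n(K),[s])\cong H_*(\hat{A}_s)$ under which the nonzero maps $F_{\overline{X}^\circ_n,\spint}$ are exactly the canonical maps $\hat v_s,\hat h_s\colon H_*(\hat{A}_s)\to\F$; by definition $\nu(K)=\min\{s:\hat v_s\text{ is surjective}\}$, and by the conjugation symmetry of $\cfkinfty$ one has $\hat h_s$ surjective if and only if $s\le -\nu(K)$. Hence the surjectivity pattern of the family $\{F_{\overline{X}^\circ_n,\spint}\}$ records $\nu(K)$, and the only ambiguity isolated above --- the $H_2$-involution --- merely interchanges the surjectivity pattern of the $\hat v_s$ with that of the $\hat h_s$, whose thresholds both return $\nu(K)$; so $\nu(K)$ is recovered. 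For the remaining $n\ge 0$ one feeds in the general (non-large) form of the surgery formula together with the surgery-exact-triangle bootstrap of \S\ref{sec:large-surgeries}: the cobordisms $W_{n+1},\dots,W_m$ and the reversed handle are all intrinsic to $X_n(K)$, and chasing the triangles upward as in the proof of Theorem~\ref{thm:trace-large-surgery} still detects surjectivity of the relevant $\hat v_s$. This gives the theorem for all $n\ge 0$.

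The case $n<0$ is genuinely harder, and is where the exceptional possibility comes from. Here the reversed cobordism of a negative trace realizes only part of the family $\{\hat v_s,\hat h_s\}$, so on its own it does not pin down $\nu(K)$; one must supplement it with the finer Floer package of $\partial X_n(K)=S^3_n(K)$ --- in particular its $d$-invariants, which recover local $h$-invariants of $\mirror{K}$ --- and with the observation that reversing orientation gives $\overline{X_n(K)}\cong X_{-n}(\mirror{K})$, to which the case $n\ge 0$ applies and recovers $\nu(\mirror{K})$. Combining these inputs with the elementary relation $\nu(K)+\nu(\mirror{K})\in\{0,1\}$ cuts the possibilities for $\nu(K)$ down to a single value in every case except the one in which the leftover indeterminacy is exactly $\{0,1\}$. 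The main obstacle is precisely this last step: showing that the combined data always suffice away from that case --- while carefully tracking the grading conventions that match cobordism maps to the surgery-formula maps and the $H_2\cong\Z$ indeterminacy throughout --- which is what forces the hypothesis ``$n\ge 0$, or $\{\nu(J),\nu(K)\}\ne\{0,1\}$'' into the statement.
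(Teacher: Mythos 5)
The paper does not prove this statement: it is imported wholesale from Hayden--Mark--Piccirillo \cite[Theorem~1.4]{hayden-mark-piccirillo}, with the text explicitly deferring to p.~17 of that paper for the recipe extracting $\nu(K)$ from $X_n(K)$. So there is no internal proof to compare against, and your sketch must stand on its own. Its overall shape --- read $\nu$ off the $\spc$-refined cobordism maps of the reversed trace cobordism, which are invariants of the oriented diffeomorphism type of $X_n(K)$ up to the involution of $H_2(X_n(K))\cong\Z$, and reduce $n<0$ to $n>0$ via $-X_n(K)\cong X_{-n}(\mirror{K})$ --- is the right one, and the large-$n$ step together with the observation that the $H_2$-involution swaps the $\hat{v}$- and $\hat{h}$-surjectivity patterns (both of whose thresholds return $\nu(K)$) is fine. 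But two steps have genuine gaps.

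First, for $0\le n\le 2g(K)-2$ you assert that the cobordisms $W_{n+1},\dots,W_m$ are ``intrinsic to $X_n(K)$.'' They are not: $W_{n+1}$ is attached along the image in $S^3_n(K)$ of a meridian of $K$, i.e.\ along the dual knot, and the dual knot is precisely the data that the diffeomorphism type of $X_n(K)$ fails to determine --- this is the entire reason trace detection differs from characterizing slopes, and it is why Theorem~\ref{thm:trace-large-surgery} is phrased purely in terms of $\partial X_n(K)$ and the span $\cS_n$ of the relative invariants, with the $W_k$ appearing only as auxiliary tools whose kernels happen to admit an intrinsic description (Proposition~\ref{prop:fw-sn}). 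Your bootstrap therefore does not show that surjectivity of each individual $\hat{v}_s$ is a trace invariant for small $n\ge 0$; making that step intrinsic is the nontrivial content of the Hayden--Mark--Piccirillo recipe and cannot be waved through.

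Second, the $n<0$ case is not actually carried out, and you say as much (``the main obstacle is precisely this last step''). From $-X_n(K)\cong X_{-n}(\mirror{K})$ and the $n\ge 0$ case you recover $a:=\nu(\mirror{J})=\nu(\mirror{K})$, and Lemma~\ref{lem:sum-nu-mirror} then only confines $\nu(J)$ and $\nu(K)$ to the two-element set $\{-a,\,1-a\}$; it does not ``cut the possibilities down to a single value'' for any value of $a$. What closes the argument is the stronger fact that $\nu(K)+\nu(\mirror{K})=0$ forces $\nu(K)=\nu(\mirror{K})=0$: the sum vanishes only when $\epsilon(K)=0$, which by \cite{hom-smooth-concordance} forces $\tau(K)=0$ and hence $\nu(K)=\nu(\mirror{K})=0$. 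Granting this, $a\neq 0$ pins down $\nu(K)=\nu(J)=1-a$, while $a=0$ leaves exactly the residual ambiguity $\{0,1\}$ in the statement. Without it, your argument proves only $|\nu(J)-\nu(K)|\le 1$ when $n<0$, and the appeal to $d$-invariants is never developed into anything that would substitute for it.
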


The recipe for extracting $\nu(K)$ from $X_n(K)$ is not very concise, but it is summarized on \cite[p.~17]{hayden-mark-piccirillo}.

Ozsv\'ath--Szab\'o \cite[Proposition~9.6]{osz-rational} proved that the $\nu$-invariants $\nu(K)$ and $\nu(\mirror{K})$ of $K$ and its mirror govern the ranks of $\hfhat$ for nonzero rational surgeries on $K$.  In \cite[Proposition~10.1]{bs-concordance} and \cite[Lemma~10.4]{bs-concordance} we reinterpreted this as follows:

\begin{proposition}\label{prop:hfhat-surgeries}
Define an integer $\hat\nu(K)$ by the formula
\begin{equation} \label{eq:hat-nu}
\hat\nu(K) = \begin{cases} \hphantom{-}\max(2\nu(K)-1,0), & \nu(K) \geq \nu(\mirror{K}) \\ {-}\max(2\nu(\mirror{K})-1,0), & \nu(K) \leq \nu(\mirror{K}). \end{cases}
\end{equation}
Then there is an integer $\hat{r}_0(K)$ such that
\[ \dim \hfhat(S^3_{p/q}(K)) = q\cdot \hat{r}_0(K) + |p-q\hat\nu(K)| \]
for all nonzero, relatively prime integers $p \neq 0$ and $q > 0$.
\end{proposition}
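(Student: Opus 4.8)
The plan is to deduce the formula from Ozsv\'ath--Szab\'o's rational surgery formula \cite[Proposition~9.6]{osz-rational}, treating negative slopes by mirroring; this is carried out in \cite[Proposition~10.1]{bs-concordance} together with \cite[Lemma~10.4]{bs-concordance}, and I sketch the argument here. Recall that for coprime $p,q>0$ that proposition computes $\dim\hfhat(S^3_{p/q}(K))$ from the hat-flavored mapping cone $\widehat{\mathbb{X}}(p/q)$, which is assembled out of the hat large-surgery groups $\widehat{A}_s$ and the maps $\widehat{v}_s,\widehat{h}_s\colon\widehat{A}_s\to\widehat{B}\cong\F$: the $\widehat{v}_s$ induce a surjection on homology for $s$ at least $\nu(K)$ and the zero map for smaller $s$, the $\widehat{h}_s$ behave symmetrically with respect to $\nu(\mirror{K})$ via the conjugation symmetry of $\cfkinfty(K)$, and $\dim H_*(\widehat{A}_s)=1$ for all sufficiently large $|s|$. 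For negative slopes I would use that $S^3_{p/q}(K)\cong -S^3_{-p/q}(\mirror{K})$ and that $\dim_{\F}\hfhat$ is invariant under orientation reversal, which reduces the case $p<0$ for $K$ to the case $-p>0$ for $\mirror{K}$; since \eqref{eq:hat-nu} gives $\hat\nu(\mirror{K})=-\hat\nu(K)$, and hence $|{-p}-q\,\hat\nu(\mirror{K})|=|p-q\,\hat\nu(K)|$, it then suffices to prove the formula for all coprime $p,q>0$ and to check separately that $\hat{r}_0(\mirror{K})=\hat{r}_0(K)$.

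For $p,q>0$ the heart of the argument is a bookkeeping computation with $\widehat{\mathbb{X}}(p/q)$. Because of the three facts listed above, only finitely many ``blocks'' of the cone differ from the generic one, so its total homology rank equals the generic contribution --- which depends on $(p,q)$ only through $p$, $q$, and $\nu(K)$ --- plus finitely many corrections coming from the groups $H_*(\widehat{A}_s)$ of rank $>1$ and from the finitely many $\widehat{v}_s,\widehat{h}_s$ that are neither surjective on homology nor forced to vanish. Collecting terms, one obtains a closed expression for $\dim\hfhat(S^3_{p/q}(K))$ that is piecewise linear in $p$ with at most one corner, located at $p=q(2\nu(K)-1)$ when $\nu(K)\geq1$ and absent otherwise (the would-be corner then lying at a non-positive value of $p$). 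I would define $\hat{r}_0(K)$ by demanding that $\dim\hfhat(S^3_{p/q}(K))=q\,\hat{r}_0(K)+\bigl(p-q\,\hat\nu(K)\bigr)$ in the regime $p\gg q$, and then verify that with this normalization the closed expression equals $q\,\hat{r}_0(K)+|p-q\,\hat\nu(K)|$ on \emph{every} linear piece. This splits along the two cases of \eqref{eq:hat-nu}: if $\nu(K)\geq\nu(\mirror{K})$ then $\hat\nu(K)=\max(2\nu(K)-1,0)\geq0$ and the corner of $|p-q\,\hat\nu(K)|$ at $p=q\,\hat\nu(K)$ is exactly the corner of the mapping-cone expression; if $\nu(K)<\nu(\mirror{K})$ then $p-q\,\hat\nu(K)=p+q(2\nu(\mirror{K})-1)>0$ for all $p,q>0$, so the absolute value never turns over and the corrections coming from the $\widehat{h}_s$ maps account precisely for the linear-in-$q$ term $q(2\nu(\mirror{K})-1)$. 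Matching the cone count against this two-parameter formula, case by case, is the content of \cite[Lemma~10.4]{bs-concordance}.

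Finally I would check that $\hat{r}_0$ is a mirror invariant, which is what makes the reduction to positive slopes legitimate. Taking $q=1$ in the positive-slope formula gives $\dim\hfhat(S^3_N(K))=N+\hat{r}_0(K)-\hat\nu(K)$ for $N\gg0$; comparing with the analogous statement for $\mirror{K}$ and with the large-surgery identity $\dim\hfhat(S^3_{-N}(K))=\dim\hfhat(S^3_N(\mirror{K}))$ reduces $\hat{r}_0(\mirror{K})=\hat{r}_0(K)$ to the relation $\dim\hfhat(S^3_{-N}(K))-\dim\hfhat(S^3_N(K))=2\,\hat\nu(K)$ for $N\gg0$, which is itself a standard consequence of the conjugation symmetry of $\cfkinfty(K)$ (or can be read off directly from the integer surgery formula). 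Granting this, the positive-slope formula for $\mirror{K}$ transports back to the negative-slope formula for $K$, and the proof is complete.

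I expect the main obstacle to be the computation in the second paragraph: extracting from Ozsv\'ath--Szab\'o's mapping cone exactly which finitely many blocks of $\widehat{\mathbb{X}}(p/q)$ are atypical, and verifying that their aggregate correction assembles into the single piecewise-linear function $|p-q\,\hat\nu(K)|$ up to the additive constant $q\,\hat{r}_0(K)$ --- in particular that there is no spurious extra jump at the corner. The mirroring step and the mirror-invariance of $\hat{r}_0$ are routine orientation-reversal bookkeeping.
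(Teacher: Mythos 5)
Your proposal follows exactly the route the paper takes: Proposition~\ref{prop:hfhat-surgeries} is stated in the paper as a restatement of \cite[Proposition~10.1 and Lemma~10.4]{bs-concordance}, which in turn repackage Ozsv\'ath--Szab\'o's rational surgery formula \cite[Proposition~9.6]{osz-rational}, with negative slopes handled by mirroring and the identity $\hat\nu(\mirror{K})=-\hat\nu(K)$. Your sketch of the mapping-cone bookkeeping and the mirror-invariance of $\hat{r}_0$ is a reasonable outline of what those cited results contain, so this is essentially the same argument.
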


In fact, for nonzero integers $m \geq \hat{\nu}(K)$ the formula of Proposition~\ref{prop:hfhat-surgeries} says that
\[ \dim \hfhat(S^3_m(K)) = m + (\hat{r}_0(K) - \hat{\nu}(K)) \]
and so \eqref{eq:hfhat-odd-from-hfhat} becomes
\begin{equation} \label{eq:r-nu-hfodd}
\hat{r}_0(K) - \hat{\nu}(K) = 2\dim \hfhat_\godd(S^3_m(K)), \qquad m \gg 0.
\end{equation}
We also observe that the definition of $\hat\nu(K)$ readily implies that $\hat\nu(\mirror{K}) = -\hat\nu(K)$.

\begin{remark}
By comparing \cite[Proposition~10.1]{bs-concordance} to \cite[Proposition~9.6]{osz-rational} when $p/q$ is a large positive integer, we see that when $\nu(K) \geq \nu(\mirror{K})$, the difference $\hat{r}_0(K) - \hat{\nu}(K)$ can be extracted from $\cfkinfty(K)$ by the formula
\[ \hat{r}_0(K) - \hat{\nu}(K) = \sum_{s\in\Z} (\dim H_*(\hat{A}_s)-1), \]
where the complexes $\hat{A}_s = C\{\max(i,j-s)=0\}$ are certain subquotients of $\cfkinfty(K)$ that determine $\hfhat$ of large surgeries on $K$.
\end{remark}

\begin{lemma} \label{lem:sum-nu-mirror}
For any knot $K \subset S^3$, we have $\nu(K)+\nu(\mirror{K}) \in \{0,1\}$.  Thus if $\nu(K) \geq 1$ then $\hat\nu(K) = 2\nu(K)-1$, while $\nu(K) \leq 0$ implies that $\hat\nu(K) \leq 0$.
\end{lemma}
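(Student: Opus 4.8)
The plan is to isolate one genuinely nontrivial input — the bound $\nu(K)+\nu(\mirror K)\in\{0,1\}$ — and then read off both assertions of the lemma by a short case analysis of the definition \eqref{eq:hat-nu}. For the bound I would combine three facts. First, $\nu(K)\in\{\tau(K),\tau(K)+1\}$ for every knot $K$ \cite{osz-rational}; write $\nu(K)=\tau(K)+\epsilon$ with $\epsilon\in\{0,1\}$. Second, $\tau$ changes sign under mirroring, so $\nu(\mirror K)=-\tau(K)+\epsilon'$ with $\epsilon'\in\{0,1\}$, and hence $\nu(K)+\nu(\mirror K)=\epsilon+\epsilon'$; this already gives $\nu(K)+\nu(\mirror K)\ge 0$ for free. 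Third — and this is the only step that is not formal — $\nu(K)$ and $\nu(\mirror K)$ cannot \emph{both} exceed the corresponding $\tau$ by one, i.e.\ $\epsilon=\epsilon'=1$ is impossible; in terms of the maps $\hat v_s,\hat h_s$ used to define $\nu$, this is the statement that for $K$ the vertical map $\hat v_{\tau(K)}$ and the horizontal map $\hat h_{\tau(K)}$ are not simultaneously trivial on homology (the horizontal maps of $K$ being, via the mirroring symmetry of $\cfkinfty$, the vertical maps governing $\nu(\mirror K)$), or equivalently that the local $h$-invariants of $K$ satisfy $V_{\tau(K)}(K)=0$ or $H_{\tau(K)}(K)=0$. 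Granting this, $\epsilon+\epsilon'\in\{0,1\}$. As a sanity check, the overlap case $\nu(K)=\nu(\mirror K)$ of \eqref{eq:hat-nu} is itself only well-posed when $\nu(K)\le 0$, a weaker shadow of the same phenomenon.

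With the bound in hand the rest is bookkeeping, and the two cases $\nu(K)\ge 1$ and $\nu(K)\le 0$ are exhaustive since $\nu(K)\in\Z$. If $\nu(K)\ge 1$, then $\nu(\mirror K)\le 1-\nu(K)\le 0<\nu(K)$, so the first clause of \eqref{eq:hat-nu} applies and $\hat\nu(K)=\max(2\nu(K)-1,0)=2\nu(K)-1$ because $2\nu(K)-1\ge 1>0$. If instead $\nu(K)\le 0$, then $\nu(\mirror K)\ge -\nu(K)\ge 0\ge \nu(K)$, so the second clause applies — and it agrees with the first in the boundary case $\nu(K)=\nu(\mirror K)$, which by the bound forces $\nu(K)=0$ — giving $\hat\nu(K)=-\max(2\nu(\mirror K)-1,0)\le 0$.

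I expect the only real obstacle to be the third input: that $\nu$ and its mirror are not both one more than $\tau$ (equivalently, that $\nu(K)+\nu(\mirror K)\ne 2$). If one wants a self-contained argument rather than citing \cite{osz-rational}, the crux is exactly a non-simultaneous-triviality statement for the vertical map at Alexander grading $\tau(K)$ and the horizontal map there — this is where all the Heegaard Floer content lives. Everything downstream of it — the sign behavior of $\tau$ under mirroring, the range $\nu\in\{\tau,\tau+1\}$, and the case analysis of \eqref{eq:hat-nu} — is standard or elementary.
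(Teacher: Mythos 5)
Your proof is correct and follows essentially the same route as the paper: bound $\nu(K)+\nu(\mirror{K})$ between $0$ and $2$ using $\nu \in \{\tau,\tau+1\}$ and $\tau(\mirror{K})=-\tau(K)$, rule out the value $2$, and then read off both conclusions from the case analysis of \eqref{eq:hat-nu}. The one input you ``grant'' --- that $\nu(K)=\tau(K)+1$ and $\nu(\mirror{K})=\tau(\mirror{K})+1$ cannot hold simultaneously, equivalently the non-simultaneous triviality of the vertical and horizontal maps at Alexander grading $\tau(K)$ --- is precisely what the paper cites from \cite{hom-smooth-concordance}, so you have correctly isolated the crux.
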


\begin{proof}
According to \cite[Equation~(34)]{osz-rational}, we have $\nu(K) \in \{\tau(K),\tau(K)+1\}$ and similarly for $\nu(\mirror{K})$, so
\begin{align*}
\tau(K) &\leq \nu(K) \leq \tau(K)+1 \\
\tau(\mirror{K}) & \leq \nu(\mirror{K}) \leq \tau(\mirror{K})+1.
\end{align*}
Since $\tau(\mirror{K}) = -\tau(K)$, we add these inequalities to get $0 \leq \nu(K)+\nu(\mirror{K}) \leq 2$.  In fact, the sum cannot be $2$, because this would imply that $\nu(K)=\tau(K)+1$ and $\nu(\mirror{K})=\tau(\mirror{K})+1$, and these cannot both be true by \cite[\S3]{hom-smooth-concordance}, so the sum must be either $0$ or $1$.

These inequalities immediately imply that if $\nu(K) \geq 1$ then
\[ \nu(\mirror{K}) < 2-\nu(K) \leq 1 \leq \nu(K), \]
in which case Proposition~\ref{prop:hfhat-surgeries} says that $\hat\nu(K) = \max(2\nu(K)-1,0) = 2\nu(K)-1$.  Similarly, if $\nu(K) \leq 0$ then
\[ \nu(\mirror{K}) \geq -\nu(K) \geq 0 \geq \nu(K), \]
hence $\hat\nu(K) = -\max(2\nu(\mirror{K})-1,0) \leq 0$ as claimed.
\end{proof}

\begin{lemma} \label{lem:rational-dim-nu-hfodd}
For any knot $K \subset S^3$, the pair of integers
\[ \big(\nu(K), \dim \hfhat_\godd(S^3_m(K))\big) \]
for some sufficiently large $m$ determine the values of $\dim \hfhat(S^3_r(K))$ for all positive rational $r$.
\end{lemma}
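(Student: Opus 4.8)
The plan is to express $\dim\hfhat(S^3_{p/q}(K))$ for coprime $p,q>0$ in terms of the two invariants $\hat\nu(K)$ and $\hat{r}_0(K)$ of Proposition~\ref{prop:hfhat-surgeries}, and then to check that the given pair $\big(\nu(K),\dim\hfhat_\godd(S^3_m(K))\big)$ already pins down whatever combination of these invariants actually appears in the formula for positive slopes.

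First I would invoke \eqref{eq:r-nu-hfodd}: for $m\gg 0$ we have $\hat{r}_0(K)-\hat\nu(K) = 2\dim\hfhat_\godd(S^3_m(K))$, so the given data recovers the difference $\hat{r}_0(K)-\hat\nu(K)$, and Proposition~\ref{prop:hfhat-surgeries} then reads
\[ \dim\hfhat(S^3_{p/q}(K)) = q\hat\nu(K) + 2q\dim\hfhat_\godd(S^3_m(K)) + |p - q\hat\nu(K)|. \]
Next I would split according to Lemma~\ref{lem:sum-nu-mirror}, using that $\nu(K)\in\Z$. If $\nu(K)\geq 1$, then $\hat\nu(K) = 2\nu(K)-1$ is explicitly determined by $\nu(K)$, so the displayed formula is entirely determined by the given pair for every coprime $p,q>0$. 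If instead $\nu(K)\leq 0$, then $\hat\nu(K)\leq 0$, so for any $p,q>0$ we have $p - q\hat\nu(K)\geq p > 0$; the absolute value opens up, the $q\hat\nu(K)$ terms cancel, and we are left with
\[ \dim\hfhat(S^3_{p/q}(K)) = p + 2q\dim\hfhat_\godd(S^3_m(K)), \]
which again depends only on the given data.

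The one point to be careful about is that in the second case we cannot determine $\hat\nu(K)$ itself from $\nu(K)$ alone --- it could be $0$ or a negative value, depending on $\nu(\mirror{K})$ --- but this is harmless, since for positive surgery slopes the absolute-value term simplifies so that only the already-known quantity $\hat{r}_0(K)-\hat\nu(K)$ survives. Recognizing that this collapse occurs precisely because we restrict to $r>0$, rather than any genuine technical difficulty, is the crux of the argument.
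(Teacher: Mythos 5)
Your proposal is correct and follows essentially the same route as the paper's proof: both use \eqref{eq:r-nu-hfodd} to recover $\hat{r}_0(K)-\hat\nu(K)$ from $\dim\hfhat_\godd(S^3_m(K))$, then split via Lemma~\ref{lem:sum-nu-mirror} into the case $\nu(K)\geq 1$ (where $\hat\nu(K)=2\nu(K)-1$ is determined) and the case $\nu(K)\leq 0$ (where $\hat\nu(K)\leq 0$ makes the absolute value open up so that only the difference $\hat{r}_0(K)-\hat\nu(K)$ survives). The algebra and the observation about why the restriction to $r>0$ matters are exactly the content of the paper's argument.
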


\begin{proof}
Suppose first that $\nu(K) \leq 0$, and write $r=p/q$ for some relatively prime $p,q > 0$.  Then Lemma~\ref{lem:sum-nu-mirror} says that $\hat\nu(K) \leq 0$, hence Proposition~\ref{prop:hfhat-surgeries} gives us
\begin{align*}
\dim \hfhat(S^3_r(K)) &= p + q(\hat{r}_0(K) - \hat{\nu}(K)) \\
&= p + 2q\cdot \dim \hfhat_\godd(S^3_m(K)),
\end{align*}
where the second equation comes from \eqref{eq:r-nu-hfodd}.

If instead $\nu(K) \geq 1$, then Lemma~\ref{lem:sum-nu-mirror} says that $\hat{\nu}(K) = 2\nu(K)-1$.  Once we know $\hat\nu(K)$ and $\dim \hfhat_\godd(S^3_m(K))$ for $m \gg 0$, we use \eqref{eq:r-nu-hfodd} to recover $\hat{r}_0(K)$, and then Proposition~\ref{prop:hfhat-surgeries} determines $\hfhat(S^3_r(K))$ for all nonzero $r$.
\end{proof}

We can now prove that $X_n(K)$ determines the dimension of $\hfhat(S^3_r(K))$ for all rational slopes $r$ of a given sign, and often simply for all $r\in\Q$.

\begin{theorem} \label{thm:trace-determines-hfhat-restated}
Fix an integer $n \in \Z$ and a knot $K \subset S^3$.  Given a nonzero rational number $r\in\Q$, the dimension
\[ \dim \hfhat(S^3_r(K)) \]
is completely determined by the oriented diffeomorphism type of $X_n(K)$, meaning that if $X_n(J) \cong X_n(K)$ then $\dim \hfhat(S^3_r(J)) = \dim \hfhat(S^3_r(K))$, if any one of the following conditions hold:
\begin{enumerate}
\item $n \geq 0$ and $r > 0$. \label{i:n-r-positive}
\item $n \geq 0$ and $\nu(K) \geq 1$. \label{i:n-nu-positive}
\item $n \in \Z$ and $|\nu(K)| \geq 2$. \label{i:n-nu-large}
\end{enumerate}
\end{theorem}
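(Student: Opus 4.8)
The plan is to exploit Proposition~\ref{prop:hfhat-surgeries}, which reduces $\dim\hfhat(S^3_{p/q}(K))$ to knowledge of the two integers $\hat\nu(K)$ and $\hat{r}_0(K)$, and to recover these from the trace. By Theorem~\ref{thm:hmp-nu}, the oriented diffeomorphism type of $X_n(K)$ always determines $\nu(K)$ when $n\ge 0$, and in general unless $\{\nu(J),\nu(K)\}=\{0,1\}$; and by Lemma~\ref{lem:sum-nu-mirror}, $\nu(K)$ determines $\hat\nu(K)$ as soon as $\nu(K)\ge 1$. Independently, for $n\ge 0$ Theorem~\ref{thm:trace-large-surgery} shows that $X_n(K)$ determines $\dim\hfhat_\godd(S^3_m(K))$ for $m\gg 0$, which by \eqref{eq:r-nu-hfodd} equals $\tfrac12\big(\hat{r}_0(K)-\hat\nu(K)\big)$. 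Thus any \emph{nonnegative} trace $X_n(K)$ recovers the pair $\big(\nu(K),\ \hat{r}_0(K)-\hat\nu(K)\big)$; everything else is bookkeeping. (I will assume $K$ is nontrivial throughout; the case $K=\textrm{unknot}$, which can only arise under condition~\ref{i:n-r-positive} since $\nu=0$, is easily handled separately, e.g.\ using that $0$ is a characterizing slope for the unknot \cite{gabai-foliations3}.)

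For condition~\ref{i:n-r-positive} ($n\ge 0$, $r>0$), the pair $\big(\nu(K),\dim\hfhat_\godd(S^3_m(K))\big)$ for $m\gg 0$ is exactly the input of Lemma~\ref{lem:rational-dim-nu-hfodd}, which then determines $\dim\hfhat(S^3_r(K))$ for every positive rational $r$; applying the same recipe to any $J$ with $X_n(J)\cong X_n(K)$ gives the asserted equality. For condition~\ref{i:n-nu-positive} ($n\ge 0$, $\nu(K)\ge 1$), Lemma~\ref{lem:sum-nu-mirror} gives $\hat\nu(K)=2\nu(K)-1$, so from $\hat{r}_0(K)-\hat\nu(K)$ we recover $\hat{r}_0(K)$ as well, and then Proposition~\ref{prop:hfhat-surgeries} determines $\dim\hfhat(S^3_r(K))$ for \emph{every} nonzero rational $r$; this is precisely the $\nu(K)\ge 1$ half of the proof of Lemma~\ref{lem:rational-dim-nu-hfodd}.

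Condition~\ref{i:n-nu-large} ($n\in\Z$, $|\nu(K)|\ge 2$) is the case that needs a new idea, since a negative trace lies outside the scope of Theorem~\ref{thm:trace-large-surgery}; here I would pass to the mirror. The hypothesis $|\nu(K)|\ge 2$ does two things: it renders the exceptional case of Theorem~\ref{thm:hmp-nu} vacuous, so $X_n(J)\cong X_n(K)$ forces $\nu(J)=\nu(K)$ for any $n$; and, by Lemma~\ref{lem:sum-nu-mirror}, exactly one of $\nu(K),\nu(\mirror K)$ is $\ge 2$, so that value pins down the corresponding $\hat\nu$. Since $\overline{X_n(K)}\cong X_{-n}(\mirror K)$ and $\dim\hfhat(S^3_r(K))=\dim\hfhat(S^3_{-r}(\mirror K))$, replacing $(K,n)$ by $(\mirror K,-n)$ affects neither the hypothesis $X_n(J)\cong X_n(K)$ nor the desired conclusion, so we may assume $\nu(K)\ge 2$; then $\nu(K)$, and hence $\hat\nu(K)=2\nu(K)-1$, is determined by $X_n(K)$. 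If the index is now $\ge 0$ this is condition~\ref{i:n-nu-positive}. If it is $<0$, then $\overline{X_n(K)}\cong X_{-n}(\mirror K)$ is a nonnegative trace, so Theorem~\ref{thm:trace-large-surgery} together with \eqref{eq:r-nu-hfodd} determines $\tfrac12\big(\hat{r}_0(\mirror K)-\hat\nu(\mirror K)\big)$; using $\hat\nu(\mirror K)=-\hat\nu(K)$ and the mirror-invariance $\hat{r}_0(\mirror K)=\hat{r}_0(K)$ (immediate from Proposition~\ref{prop:hfhat-surgeries} applied to $K$ and $\mirror K$ via $S^3_{p/q}(\mirror K)\cong\overline{S^3_{-p/q}(K)}$) we recover $\hat{r}_0(K)$, and Proposition~\ref{prop:hfhat-surgeries} finishes the proof.

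I expect the only genuine obstacle to be organizing condition~\ref{i:n-nu-large}: because Theorem~\ref{thm:trace-large-surgery} produces Heegaard Floer data only for nonnegative traces, one is forced into the mirror detour, and it succeeds \emph{only} because $\hat{r}_0$ is a mirror invariant while $\hat\nu$ is mirror-anti-invariant, and because $|\nu(K)|\ge 2$ is exactly the hypothesis that both evades the $\{0,1\}$ ambiguity in Theorem~\ref{thm:hmp-nu} and lets us read off $\hat\nu$ of whichever of $K,\mirror K$ has $\nu\ge 2$. Keeping straight which knot and which slope-sign is in play at each step is the fiddliest point, but no tool beyond Theorems~\ref{thm:hmp-nu} and~\ref{thm:trace-large-surgery}, Proposition~\ref{prop:hfhat-surgeries}, and Lemmas~\ref{lem:sum-nu-mirror} and~\ref{lem:rational-dim-nu-hfodd} should be required.
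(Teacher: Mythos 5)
Your proposal is correct and follows essentially the same route as the paper: case~\eqref{i:n-r-positive} via Lemma~\ref{lem:rational-dim-nu-hfodd}, and cases~\eqref{i:n-nu-positive}--\eqref{i:n-nu-large} by recovering $\hat\nu$ from Theorem~\ref{thm:hmp-nu} plus Lemma~\ref{lem:sum-nu-mirror} and the difference $\hat{r}_0-\hat\nu$ from Theorem~\ref{thm:trace-large-surgery} via \eqref{eq:r-nu-hfodd}, passing to $X_{-n}(\mirror{K})$ when $n<0$. The only (harmless) cosmetic difference is that in case~\eqref{i:n-nu-large} with $n<0$ you invoke the mirror-invariance $\hat{r}_0(\mirror{K})=\hat{r}_0(K)$ to compute $\dim\hfhat(S^3_r(K))$ directly, where the paper instead computes $\dim\hfhat(S^3_{-r}(\mirror{K}))$ from $\hat\nu(\mirror{K})$ and $\hat{r}_0(\mirror{K})$; your explicit treatment of the unknot is a reasonable extra precaution.
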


\begin{proof}
For case~\eqref{i:n-r-positive}, Lemma~\ref{lem:rational-dim-nu-hfodd} says it is enough to show that $X_n(K)$ determines both $\nu(K)$ and $\dim \hfhat_\godd(S^3_m(K))$ for sufficiently large $m \gg 0$.  The assumption that $n \geq 0$ guarantees that $X_n(K)$ detects each of these quantities, by Theorem~\ref{thm:hmp-nu} and Theorem~\ref{thm:trace-large-surgery}, so in these cases the proof is complete.

For cases \eqref{i:n-nu-positive} and \eqref{i:n-nu-large}, we claim that the $n$-trace $X_n(K)$ detects $\hat\nu(K)$ as long as either $\nu(K) = 1$ and $n\geq 0$, or $|\nu(K)| \geq 2$ and $n$ is arbitrary.  If $\nu(K) \geq 1$ then Theorem~\ref{thm:hmp-nu} says that $X_n(K)$ detects $\nu(K)$ as long as either $n\geq 0$ or $\nu(K) \geq 2$, and in this case we have $\hat\nu(K) = 2\nu(K)-1$ by Lemma~\ref{lem:sum-nu-mirror}, so $X_n(K)$ detects $\hat\nu(K)$.  If instead $\nu(K) \leq -2$ then Lemma~\ref{lem:sum-nu-mirror} says that 
\[ \nu(\mirror{K}) -2 \geq \nu(K)+\nu(\mirror{K}) \geq 0, \]
so $\nu(\mirror{K}) \geq 2$, and then the previous argument says that $X_{-n}(\mirror{K})$ detects $\nu(\mirror{K})$ and hence $\hat\nu(\mirror{K})=2\nu(\mirror{K})-1$ for arbitrary $n$; since $\hat\nu(K) = -\hat\nu(\mirror{K})$ and $X_n(K) \cong -X_{-n}(\mirror{K})$, it follows that if $\nu(K) \leq -2$ then $X_n(K)$ detects $\hat\nu(K)$ for all $n\in\Z$ as well.

Now if $n \geq 0$ then $X_n(K)$ detects $\dim \hfhat(S^3_m(K))$ for $m\gg0$ by Theorem~\ref{thm:trace-large-surgery}, and we have also seen that if we also have $\nu(K) \not\in \{-1,0\}$ then $X_n(K)$ also detects $\hat\nu(K)$.  According to \eqref{eq:r-nu-hfodd} these two integers are enough to recover $\hat{r}_0(K)$, and then by Proposition~\ref{prop:hfhat-surgeries} this is enough to determine $\dim \hfhat(S^3_r(K))$ for all nonzero $r\in\Q$.  This establishes case~\eqref{i:n-nu-positive}, as well as the $n\geq 0$ portion of case~\eqref{i:n-nu-large}.

Supposing now that $n<0$ but $|\nu(K)| \geq 2$, we know once again from above that $X_n(K)$ detects $\hat\nu(K)$ and hence $\hat\nu(\mirror{K}) = -\hat\nu(K)$.  Since $-n$ is positive it follows from Theorem~\ref{thm:trace-large-surgery} that $X_{-n}(\mirror{K}) \cong -X_n(K)$ detects $\dim \hfhat(S^3_m(\mirror{K}))$ for $m\gg0$, hence so does $X_n(K)$, and again by \eqref{eq:r-nu-hfodd} we conclude that $X_n(K)$ determines $\hat{r}_0(\mirror{K})$.  Now Proposition~\ref{prop:hfhat-surgeries} tells us that $X_n(K)$ determines
\begin{equation} \label{eq:reverse-r}
\dim \hfhat(S^3_r(K)) = \dim \hfhat(-S^3_r(K)) = \dim \hfhat(S^3_{-r}(\mirror{K}))
\end{equation}
for all rational $r\neq 0$, because the right side is determined by $\hat\nu(\mirror{K})$ and $\hat{r}_0(\mirror{K})$, and so this completes the remaining part of case~\eqref{i:n-nu-large}.
\end{proof}

\begin{corollary} \label{cor:0-trace-all-surgeries}
Given a knot $K \subset S^3$ and \emph{any} nonzero $r\in\Q$, the dimension $\dim\hfhat(S^3_r(K))$ is completely determined by the oriented diffeomorphism type of the $0$-trace $X_0(K)$.
\end{corollary}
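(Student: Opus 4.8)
The plan is to read the corollary straight off Theorem~\ref{thm:trace-determines-hfhat-restated} by splitting into the cases $r>0$ and $r<0$. For positive $r$ there is nothing to do: taking $n=0$ in part~\eqref{i:n-r-positive} of Theorem~\ref{thm:trace-determines-hfhat-restated} already says that if $X_0(J)\cong X_0(K)$ then $\dim\hfhat(S^3_r(J))=\dim\hfhat(S^3_r(K))$. So the only content is the negative slopes, and these are not covered by part~\eqref{i:n-r-positive} for $n=0$, nor in general by part~\eqref{i:n-nu-large} (which would require $|\nu(K)|\geq 2$).

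For $r<0$ I would pass to the mirror. The key inputs are the orientation-reversal identities $-X_0(K)\cong X_0(\mirror K)$ (the $n=0$ case of $X_n(K)\cong -X_{-n}(\mirror K)$) and $-S^3_r(K)\cong S^3_{-r}(\mirror K)$, together with the Heegaard Floer duality $\dim\hfhat(Y)=\dim\hfhat(-Y)$ already used in \eqref{eq:reverse-r}. Concretely: if $X_0(J)\cong X_0(K)$ by an orientation-preserving diffeomorphism, then reversing orientations on both sides gives $X_0(\mirror J)\cong -X_0(J)\cong -X_0(K)\cong X_0(\mirror K)$; since $-r>0$, part~\eqref{i:n-r-positive} of Theorem~\ref{thm:trace-determines-hfhat-restated} applied to $\mirror J,\mirror K$ with $n=0$ yields $\dim\hfhat(S^3_{-r}(\mirror J))=\dim\hfhat(S^3_{-r}(\mirror K))$. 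Finally, $\dim\hfhat(S^3_r(K))=\dim\hfhat(-S^3_r(K))=\dim\hfhat(S^3_{-r}(\mirror K))$ and likewise for $J$, which converts the previous equality into the desired $\dim\hfhat(S^3_r(J))=\dim\hfhat(S^3_r(K))$.

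I do not expect any real obstacle here; the one point that warrants care is the bookkeeping with orientations, i.e.\ verifying that it is the \emph{oriented} diffeomorphism type of $X_0(K)$ (rather than merely the unoriented one) that records the oriented diffeomorphism type of $X_0(\mirror K)=-X_0(K)$, so that nothing is lost upon mirroring. One could alternatively run the entire argument through part~\eqref{i:n-nu-large} in the range $|\nu(K)|\geq 2$ and handle $|\nu(K)|\leq 1$ separately, but the two-case split above is cleaner and needs no hypothesis on $\nu$.
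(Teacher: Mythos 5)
Your proof is correct and follows the same route as the paper: for $r>0$ apply case~\eqref{i:n-r-positive} of Theorem~\ref{thm:trace-determines-hfhat-restated} to $X_0(K)$, and for $r<0$ use $\dim\hfhat(S^3_r(K))=\dim\hfhat(S^3_{-r}(\mirror{K}))$ together with case~\eqref{i:n-r-positive} applied to $X_0(\mirror{K})\cong -X_0(K)$. The orientation bookkeeping you flag is exactly the point the paper relies on, and your treatment of it is fine.
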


\begin{proof}
If $r > 0$ then we apply case~\eqref{i:n-r-positive} of Theorem~\ref{thm:trace-determines-hfhat-restated} to $X_0(K)$.  On the other hand, if $r < 0$ then by \eqref{eq:reverse-r} it is enough to compute $\dim \hfhat(S^3_{-r}(\mirror{K}))$, and since $-r > 0$ we can recover this by applying case~\eqref{i:n-r-positive} of Theorem~\ref{thm:trace-determines-hfhat-restated} to $X_0(\mirror{K}) \cong -X_0(K)$.  In either case we see that the oriented diffeomorphism type of $X_0(K)$ determines $\dim\hfhat(S^3_r(K))$, as desired.
\end{proof}

We now prove Theorem~\ref{thm:lspace-detection-main}, which asserts that $X_n(K)$ detects whether $K$ is an L-space knot of some fixed genus.

\begin{proof}[Proof of Theorem~\ref{thm:lspace-detection-main}]
Suppose that $K \subset S^3$ is an L-space knot, and that $X_n(J) \cong X_n(K)$ for some knot $J$ and some $n \in \Z$.  If $K$ is a right-handed trefoil, then at the boundaries of these traces we have $S^3_n(J) \cong S^3_n(K)$, hence $J$ is also a right-handed trefoil by \cite{osz-characterizing} and we are done.  Since there are no other L-space knots of genus 1 \cite{ghiggini}, we assume from now on that $g(K) \geq 2$.

We now claim that $\nu(K) = g(K)$.  To see this, we recall that $\tau(K) \leq \nu(K) \leq g(K)$, where the first inequality is again \cite[Equation~(34)]{osz-rational} and the second follows from the definition of $\nu(K)$.  When $K$ is an L-space knot we have $\tau(K)=g(K)$ by \cite[Corollary~1.6]{osz-lens}, so equality must hold throughout.

Since $\nu(K) = g(K) \geq 2$, we can apply case~\eqref{i:n-nu-large} of Theorem~\ref{thm:trace-determines-hfhat-restated} to conclude that $\dim \hfhat(S^3_r(J)) = \dim \hfhat(S^3_r(K))$ for all rational $r \neq 0$.   But $S^3_r(K)$ is an L-space if and only if $r \geq 2g(K)-1$ \cite{osz-rational}, so $S^3_r(J)$ is also an L-space if and only if $r \geq 2g(K)-1$, and we conclude that $J$ is an L-space knot and that $g(J)=g(K)$.
\end{proof}

Recall from \S\ref{sec:veering} that $K \subset S^3$ is an \emph{almost L-space knot} if it satisfies
\begin{equation} \label{eq:almost-l-space-knot}
\dim \hfhat(S^3_m(K)) = m+2
\end{equation}
for all $m \gg 0$.  Since \eqref{eq:almost-l-space-knot} is equivalent (via \eqref{eq:hfhat-odd-from-hfhat}) to \[\dim \hfhat_\godd(S^3_m(K)) = 1\] for $m \gg 0$, Theorem~\ref{thm:trace-large-surgery} says that for each $n\geq 0$, the $n$-trace $X_n(K)$ detects whether $K$ is an almost L-space knot.    In fact, more is true: we further showed in \cite[Theorem~1.11]{bs-characterizing} that the only almost L-space knots of genus $1$ are the left-handed trefoil, the figure eight, and $\mirror{5_2}$; and that almost L-space knots of genus $g\geq 2$ are fibered and strongly quasipositive.  The latter implies that $(\hat{r}_0(K),\hat\nu(K))=(2g+1,2g-1)$ by \cite[Corollary~3.11]{bs-characterizing}, and moreover that $\nu(K) = g$, so now case~\eqref{i:n-nu-large} of Theorem~\ref{thm:trace-determines-hfhat-restated} yields the following.

\begin{theorem} \label{thm:almost-l-space}
Let $K$ be an almost L-space knot of genus at least 2, and suppose for some knot $J \subset S^3$ and some $n\in\Z$ that $X_n(J) \cong X_n(K)$.  Then $J$ is an almost L-space knot, and $g(J)=g(K)$.
\end{theorem}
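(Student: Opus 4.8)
The plan is to run the proof of Theorem~\ref{thm:lspace-detection-main} essentially verbatim, with the facts about almost L-space knots of genus at least $2$ recorded just above playing the role of the facts about L-space knots: namely that such a knot $K$ is fibered and strongly quasipositive, with $\nu(K)=g(K)$ and $(\hat{r}_0(K),\hat\nu(K))=(2g(K)+1,2g(K)-1)$ by \cite{bs-characterizing}.

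Concretely, I would first write $g=g(K)\geq 2$, so that $\nu(K)=g\geq 2$, and apply case~\eqref{i:n-nu-large} of Theorem~\ref{thm:trace-determines-hfhat-restated} to the given diffeomorphism $X_n(J)\cong X_n(K)$. This produces $\dim\hfhat(S^3_r(J))=\dim\hfhat(S^3_r(K))$ for every nonzero $r\in\Q$. Evaluating at a large positive integer $r=m$, where $\dim\hfhat(S^3_m(K))=m+2$ by \eqref{eq:almost-l-space-knot}, shows $\dim\hfhat(S^3_m(J))=m+2$ for all $m\gg 0$, so $J$ is itself an almost L-space knot; and then the $q=1$ case of Proposition~\ref{prop:hfhat-surgeries} forces $\hat{r}_0(J)-\hat\nu(J)=2$ (equivalently, this is \eqref{eq:r-nu-hfodd}).

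It remains to pin down the genus of $J$, which I would do by comparing surgery dimensions at the small slopes $1/q$. Using Proposition~\ref{prop:hfhat-surgeries} with $(\hat{r}_0(K),\hat\nu(K))=(2g+1,2g-1)$, one gets $\dim\hfhat(S^3_{1/q}(K))=4gq-1$ for all $q\geq 1$, whereas $\dim\hfhat(S^3_{1/q}(J))=q\hat{r}_0(J)+|1-q\hat\nu(J)|=q(\hat\nu(J)+2)+|1-q\hat\nu(J)|$. Equating these for all large $q$ rules out $\hat\nu(J)\leq 0$ (the right side would be $2q+1$, which cannot equal $4gq-1$ once $g\geq 2$), and in the remaining case $\hat\nu(J)\geq 1$ it forces $\hat\nu(J)=2g-1$. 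Since $\hat\nu(J)\geq 1$, Lemma~\ref{lem:sum-nu-mirror} then gives $\nu(J)\geq 1$ and $\hat\nu(J)=2\nu(J)-1$, so $\nu(J)=g$; hence $g(J)\geq\nu(J)=g\geq 2$, and now $J$ is an almost L-space knot of genus at least $2$, which satisfies $\nu(J)=g(J)$ (again by \cite{bs-characterizing}), yielding $g(J)=\nu(J)=g=g(K)$.

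There is no serious obstacle once Theorem~\ref{thm:trace-determines-hfhat-restated} is available; the only points that need care are (i) that the whole scheme rests on case~\eqref{i:n-nu-large}, which requires $|\nu(K)|\geq 2$ — this is precisely why the hypothesis $g(K)\geq 2$ cannot be dropped, since the genus-$1$ almost L-space knots (the left-handed trefoil, the figure eight, and $\mirror{5_2}$) have $|\nu|\leq 1$ and escape this argument — and (ii) that one must squeeze $g(J)=g$, not merely $g(J)\geq g$, out of $\nu(J)=g$, which is where the equality $\nu=g$ for almost L-space knots of genus at least $2$ is used a second time.
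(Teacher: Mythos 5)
Your proposal is correct and follows essentially the same route as the paper: the paper's proof consists of exactly the facts you cite (almost L-space knots of genus $g\geq 2$ have $\nu=g$ and $(\hat{r}_0,\hat\nu)=(2g+1,2g-1)$ by \cite{bs-characterizing}) fed into case~\eqref{i:n-nu-large} of Theorem~\ref{thm:trace-determines-hfhat-restated}, with the remaining bookkeeping left implicit. Your explicit extraction of $\hat\nu(J)=2g-1$ from the $1/q$-surgeries, and hence $\nu(J)=g(J)=g$ via Lemma~\ref{lem:sum-nu-mirror}, correctly fills in those details.
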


For genus-1 almost L-space knots, we note that $\partial X_n(K) \cong S^3_n(K)$ is already enough to detect the left-handed trefoil and the figure eight \cite{osz-characterizing}, and if $n \geq 0$ then $\partial X_n(K)$ also detects $\mirror{5_2}$ \cite{bs-characterizing}.  We do not know whether $X_n(K)$ can detect $\mirror{5_2}$ when $n$ is negative.

\section{Negative traces and positive torus knots} \label{sec:torus-surgeries}
The goal of this section is to prove Theorem~\ref{thm:negtracetorus}, which we restate here as follows:

\begin{theorem} \label{thm:negtracetorus-restated}
Let $T_{p,q}$ be a positive torus knot, and suppose for some knot $K \subset S^3$ and some integer $n \leq 0$ that $X_n(K) \cong X_n(T_{p,q})$.  Then $K = T_{p,q}$.
\end{theorem}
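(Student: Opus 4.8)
The plan is to exploit Theorem~\ref{thm:lspace-detection-main}, which forces $K$ to be an L-space knot of the same genus as $T_{p,q}$, and then combine this with what is known about characterizing slopes for torus knots. The overall shape: reduce to the case $g(T_{p,q})\geq 2$, promote $K$ to an L-space knot of genus $g(T_{p,q})$, pass to the boundary of the trace to get $S^3_n(K)\cong S^3_n(T_{p,q})$, and finish by a characterizing-slope argument that genuinely uses the L-space-knot and genus constraints.

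First I would dispose of the genus-one case. If $g(T_{p,q})=1$ then $T_{p,q}$ is the right-handed trefoil $T_{2,3}$; comparing the boundaries of the diffeomorphic $n$-traces gives $S^3_n(K)\cong S^3_n(T_{2,3})$, and since every rational number is a characterizing slope for the trefoils \cite{osz-characterizing}, we conclude $K=T_{2,3}$. So assume from now on that $g:=g(T_{p,q})\geq 2$. Since $T_{p,q}$ is a positive, hence nontrivial, L-space knot and $X_n(K)\cong X_n(T_{p,q})$, Theorem~\ref{thm:lspace-detection-main} tells us that $K$ is also an L-space knot with $g(K)=g$ (and in particular $\nu(K)=\tau(K)=g\geq 2$ by \cite{osz-lens}). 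Comparing boundaries, $S^3_n(K)\cong S^3_n(T_{p,q})$. When $n=0$ we are done immediately: $K$ and $T_{p,q}$ are then two L-space knots with the same $0$-surgery, so $K=T_{p,q}$ by Theorem~\ref{thm:main-lspace-zero} (equivalently by Theorem~\ref{thm:zero-surgery-fpf}).

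It remains to treat $n<0$, and here the plan is to show that $n$ is a characterizing slope for $T_{p,q}$ once one restricts attention to L-space knots of genus $g(T_{p,q})$. I would use that $S^3_n(T_{p,q})$ is Seifert fibered over $S^2$ with at most three exceptional fibers (Moser \cite{moser}), and feed this, together with the knowledge that $K$ is an L-space knot with $g(K)=g$, into results on characterizing slopes for torus knots such as those of McCoy \cite{mccoy-torus} and Ni--Zhang \cite{ni-zhang-torus-1}: the goal is to rule out every knot other than a torus knot that could a priori share the surgered manifold $S^3_n(T_{p,q})$, after which the identification $S^3_n(K)\cong S^3_n(T_{p,q})$ with $n\leq 0$ pins down $K=T_{p,q}$ among torus knots by comparing Seifert invariants and signatures, exactly as in the $n=0$ analysis of Lemma~\ref{lem:torus-knots-different-0-surgery}.

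The main obstacle is precisely this last step for $n<0$. No non-positive integer is unconditionally known to be a characterizing slope for any positive torus knot besides $T_{2,3}$ — even whether $0$ characterizes $T_{2,5}$ is open — so the argument cannot avoid using the extra input from Theorem~\ref{thm:lspace-detection-main} that $K$ is an L-space knot of genus exactly $g(T_{p,q})$; the delicate part is isolating from the characterizing-slope literature the statement that, under those two constraints, $S^3_n(T_{p,q})$ determines $T_{p,q}$ for every $n\leq 0$.
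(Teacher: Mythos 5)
Your reduction is the same as the paper's: compare boundaries, invoke Theorem~\ref{thm:lspace-detection-main} to force $K$ to be an L-space knot with $g(K)=g(T_{p,q})$, and handle $n=0$ via Theorem~\ref{thm:main-lspace-zero}. But for $n<0$ your argument stops exactly where the real work begins. You propose to ``feed'' the L-space and genus constraints into ``results on characterizing slopes for torus knots such as those of McCoy and Ni--Zhang'' to rule out non-torus knots sharing the surgery, and you yourself flag this as the main obstacle. There is no off-the-shelf statement in that literature that does this; the paper has to prove three new propositions, which occupy most of \S\ref{sec:torus-surgeries}. Concretely: Proposition~\ref{prop:compare-satellite-surgeries} shows that a \emph{satellite} L-space knot $K$ with $S^3_r(K)\cong S^3_r(T_{p,q})$ forces $r\geq 2g(K)-1>0$ (via Gabai's solid-torus theorem, Hom's cabling criterion, and Hom--Lidman--Vafaee on Berge--Gabai patterns), and Proposition~\ref{prop:compare-hyperbolic-surgeries} shows that a \emph{hyperbolic} fibered knot with right-veering monodromy — which every L-space knot is — sharing an $r$-surgery with a torus knot forces $0<r\leq 4g(K)$ (via Ni's exceptional-surgery theorem together with Corollary~\ref{cor:not-veering}, i.e.\ the veering/capping-off analysis of \S\ref{sec:veering}). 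Both conclusions contradict $n\leq 0$, and without them the satellite and hyperbolic cases are simply unaddressed in your write-up.

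A secondary issue is your treatment of the torus-versus-torus case for $n<0$. You propose to pin down $K$ among torus knots ``by comparing Seifert invariants and signatures, exactly as in the $n=0$ analysis of Lemma~\ref{lem:torus-knots-different-0-surgery}.'' That signature extraction uses the Casson--Gordon invariant of the $0$-surgery associated to the unique surjection $H_1(S^3_0(K))\twoheadrightarrow \Z/2\Z$, which is special to slope $0$ and does not transfer to $H_1\cong\Z/|n|\Z$. The paper's Proposition~\ref{prop:compare-torus-surgeries} instead uses the Casson--Walker surgery formula to force $\Delta''_{T_{a,b}}(1)=\Delta''_{T_{c,d}}(1)$, combines this with the genus equality to get $\{|a|,|b|\}=\{|c|,|d|\}$, and then excludes the mirror pair by comparing $\dim\hfhat(S^3_r)$ with $\dim\hfhat(S^3_{-r})$ via Proposition~\ref{prop:hfhat-surgeries} — a genuinely different mechanism that you would need to supply.
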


 For $n=0$ this follows from Theorem \ref{thm:main}, since positive torus knots are L-space knots. We  prove this theorem below, assuming some propositions which we will establish later in this section:

\begin{proof}[Proof of Theorem \ref{thm:negtracetorus-restated}]
Comparing the boundaries of the traces, we must have $S^3_n(K) \cong S^3_n(T_{p,q})$.  Thus if either $K$ or $T_{p,q}$ is unknotted then the other one must be as well, because all slopes are characterizing for the unknot \cite{kmos}.  From now on we assume that both knots are nontrivial, and we observe that since $T_{p,q}$ is an L-space knot, Theorem~\ref{thm:lspace-detection-main} says that $K$ must also be an L-space knot -- in particular, it is fibered \cite{ghiggini,ni-hfk} -- and that $g(K) = g(T_{p,q})$.

We know from geometrization for Haken manifolds that $K$ must be either a torus knot, a satellite knot, or a hyperbolic knot \cite[Corollary~2.5]{thurston-kleinian}.  In each case, the identification $S^3_n(K) \cong S^3_n(T_{p,q})$ will give us a contradiction:
\begin{itemize}
\item Proposition~\ref{prop:compare-torus-surgeries} says that if $K \neq T_{p,q}$ is a torus knot, then $g(K) \neq g(T_{p,q})$.
\item Proposition~\ref{prop:compare-satellite-surgeries} says that if $K$ is a satellite L-space knot, then $n \geq 2g(K)-1$.
\item Proposition~\ref{prop:compare-hyperbolic-surgeries} says that if $K$ is a hyperbolic L-space knot then $0 < n < 4g(K)$.
\end{itemize}
Since $K$ is indeed an L-space knot and $g(K) = g(T_{p,q})$, this means that the only way we can have $X_n(K) \cong X_n(T_{p,q})$ for any $n \leq 0$ is if $K = T_{p,q}$.
\end{proof}

The following subsections are dedicated to the proofs of Propositions~\ref{prop:compare-torus-surgeries}, \ref{prop:compare-satellite-surgeries}, and \ref{prop:compare-hyperbolic-surgeries}.

\subsection{Torus knots}

It is possible for two different torus knots to have a common surgery; in Appendix~\ref{sec:torus-pairs} we will prove Theorem~\ref{thm:torus-pairs}, providing a two-parameter family of examples where the resulting 3-manifold is always a lens space.  However, in these examples we are about to show that the torus knots in question cannot have the same Seifert genus, so Theorem~\ref{thm:lspace-detection-main} will tell us that the corresponding traces must be different.

\begin{proposition} \label{prop:compare-torus-surgeries}
Suppose for some $r \in \Q$ and some nontrivial torus knots $T_{a,b} \neq T_{c,d}$ that $S^3_r(T_{a,b}) \cong S^3_r(T_{c,d})$.  Then $r\neq 0$ and $g(T_{a,b}) \neq g(T_{c,d})$.
\end{proposition}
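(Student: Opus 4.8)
The plan is to treat the two conclusions in turn. The slope statement is immediate: if $r=0$ then Lemma~\ref{lem:torus-knots-different-0-surgery} would force $T_{a,b}=T_{c,d}$, contrary to hypothesis, so $r\neq 0$. For the genus statement I will argue by contradiction, assuming $g:=g(T_{a,b})=g(T_{c,d})$ and deducing $T_{a,b}=T_{c,d}$.

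The heart of the argument is that a homeomorphism $S^3_r(T_{a,b})\cong S^3_r(T_{c,d})$ pins down the Alexander polynomials of the two knots, through Walker's surgery formula for the Casson--Walker invariant $\lambda$. Writing $r=p/q$ in lowest terms, this formula says that
\[ \lambda(S^3_{p/q}(K)) = \lambda(S^3_{p/q}(U)) + c_{p,q}\,\Delta_K''(1) \]
for every knot $K$, where $U$ is the unknot and $c_{p,q}$ is a nonzero rational constant depending only on $p$ and $q$. Since $\lambda$ is an invariant of oriented $3$-manifolds up to orientation-preserving homeomorphism, and $T_{a,b}$ and $T_{c,d}$ are surgered along the same slope $p/q\neq 0$, comparing this formula for $K=T_{a,b}$ and $K=T_{c,d}$ gives $\Delta_{T_{a,b}}''(1)=\Delta_{T_{c,d}}''(1)$. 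Now let $A,B,C,D$ be the absolute values of $a,b,c,d$, and recall the classical identities $\Delta_{T_{m,n}}''(1)=\tfrac{1}{12}(m^2-1)(n^2-1)$ and $g(T_{m,n})=\tfrac{1}{2}(|m|-1)(|n|-1)$. Our two equalities then read
\[ (A^2-1)(B^2-1)=(C^2-1)(D^2-1) \qquad\text{and}\qquad (A-1)(B-1)=(C-1)(D-1). \]
Dividing the first by the second, which is legitimate as $A,B,C,D\geq 2$, gives $(A+1)(B+1)=(C+1)(D+1)$; then $2AB+2=(A-1)(B-1)+(A+1)(B+1)$ and $2(A+B)=(A+1)(B+1)-(A-1)(B-1)$ show that $AB$ and $A+B$ are determined by the common values of $(A-1)(B-1)$ and $(A+1)(B+1)$, so $AB=CD$ and $A+B=C+D$, whence $\{A,B\}=\{C,D\}$. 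Thus $T_{a,b}$ and $T_{c,d}$ agree up to mirroring.

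It remains to exclude the case $T_{c,d}=\overline{T_{a,b}}$; as every nontrivial torus knot is chiral, after relabeling I may assume $T_{a,b}$ is positive and $T_{c,d}=\overline{T_{a,b}}$. For this I would use Proposition~\ref{prop:hfhat-surgeries}: a positive torus knot $T$ of genus $g$ is an L-space knot, so $\hat\nu(T)=\hat{r}_0(T)=2g-1$ (using $\nu(T)=g$ together with \eqref{eq:r-nu-hfodd} and Lemma~\ref{lem:sum-nu-mirror}), and hence $\dim\hfhat(S^3_{p/q}(T))=q(2g-1)+|p-q(2g-1)|$. Applying this with $-p/q$ in place of $p/q$, and using $S^3_{p/q}(\overline{T_{a,b}})\cong -S^3_{-p/q}(T_{a,b})$ together with $\dim\hfhat(-Y)=\dim\hfhat(Y)$, we get $\dim\hfhat(S^3_{p/q}(T_{c,d}))=q(2g-1)+|p+q(2g-1)|$. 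Since $p\neq 0$ and $g\geq 1$, this differs from $\dim\hfhat(S^3_{p/q}(T_{a,b}))$, contradicting $S^3_r(T_{a,b})\cong S^3_r(T_{c,d})$; hence $T_{a,b}=T_{c,d}$, contradicting the hypothesis and completing the proof. The points I expect to need care are confirming that $c_{p,q}\neq 0$ for all $p\neq 0$ in Walker's formula and the elementary torus-knot identity for $\Delta''_{T_{m,n}}(1)$; alternatively one could run the middle step through Moser's classification of surgeries on torus knots and the orientation-sensitive classification of lens spaces, at the cost of separately ruling out near-misses such as $S^3_{13}(T_{3,4})$ and $S^3_{13}(T_{2,7})$, both of which have genus $3$.
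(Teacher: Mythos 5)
Your proposal is correct and follows essentially the same route as the paper's proof: rule out $r=0$ via Lemma~\ref{lem:torus-knots-different-0-surgery}, use the Casson--Walker surgery formula to equate $\Delta''(1)$, combine with the genus hypothesis to deduce $\{A,B\}=\{C,D\}$, and then exclude the mirror case by comparing $\dim\hfhat$ of $\pm r$-surgeries via Proposition~\ref{prop:hfhat-surgeries} with $\hat{r}_0=\hat\nu=2g-1$. The only cosmetic difference is that the paper writes the Casson--Walker coefficient explicitly as $1/r$, so its nonvanishing is immediate.
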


\begin{proof}
Let $A,B,C,D$ denote the absolute values of $a,b,c,d$ for convenience.  The case $r=0$ is ruled out by Lemma~\ref{lem:torus-knots-different-0-surgery}, so we will suppose that $r \neq 0$; then we can apply the surgery formula for the Casson--Walker invariant \cite{walker}, which says that
\[ \lambda(S^3_r(K)) = \lambda(S^3_r(U)) + \frac{1}{r}\frac{\Delta''_K(1)}{2}. \]
Since $S^3_r(T_{a,b}) \cong S^3_r(T_{c,d})$, it follows that
\[ \tfrac{1}{2}\Delta''_{T_{a,b}}(K) = \tfrac{1}{2}\Delta''_{T_{c,d}}(K). \]
These values are known explicitly: one can show as in, for example, \cite[Appendix~A]{mccoy-pretzel} that
\begin{equation} \label{eq:torus-casson}
\frac{\Delta_{T_{a,b}}''(1)}{2} = \frac{(a^2-1)(b^2-1)}{24} = g(T_{a,b}) \cdot \frac{(A+1)(B+1)}{12},
\end{equation}
and likewise for $T_{c,d}$.

If we assume that $g(T_{a,b}) = g(T_{c,d})$, then we get the system of equations
\begin{align}
(A-1)(B-1) &= (C-1)(D-1) \label{eq:torus-g-equal} \\
(A+1)(B+1) &= (C+1)(D+1) \label{eq:torus-g-plus}
\end{align}
where \eqref{eq:torus-g-equal} is equivalent to $2g(T_{a,b})=2g(T_{c,d})$, and \eqref{eq:torus-g-plus} comes from applying \eqref{eq:torus-casson} to $\Delta_{T_{a,b}}''(1) = \Delta_{T_{c,d}}''(1)$ and then dividing by \eqref{eq:torus-g-equal}.  Subtracting \eqref{eq:torus-g-equal} from \eqref{eq:torus-g-plus} gives us
\[ 2(A+B) = 2(C+D), \]
so $A+B=C+D$, and if we add this to \eqref{eq:torus-g-equal} then we get $AB=CD$ as well.  Now we have $\{A,B\} = \{C,D\}$, because these are the sets of roots of the quadratic polynomials
\[ x^2 - (A+B)x + AB = x^2 - (C+D)x + CD, \]
so it must be the case that $T_{c,d} = T_{-a,b}$.

We have now shown that $\{T_{a,b},T_{c,d}\} = \{T_{A,B},T_{-A,B}\}$, and thus
\[ S^3_r(T_{A,B}) \cong S^3_r(T_{-A,B}) \cong -S^3_{-r}(T_{A,B}). \]
The dimension of $\hfhat$ is preserved by orientation reversal, so it follows that
\begin{equation} \label{eq:hfhat-pm-r}
\dim \hfhat(S^3_r(T_{A,B})) = \dim \hfhat(S^3_{-r}(T_{A,B})).
\end{equation}
Since $T_{A,B}$ is an L-space knot of genus $g = g(T_{a,b})$, we have
\[ \hat{r}_0(T_{A,B}) = \hat\nu(T_{A,B}) = 2g-1, \]
see e.g.\ \cite[Remark~10.9]{bs-concordance}.  Writing $r = p/q$ for coprime integers $p,q$ with $p\neq 0$ and $q > 0$, we apply Proposition~\ref{prop:hfhat-surgeries} to \eqref{eq:hfhat-pm-r} to see that
\[ q(2g-1) + |p-q(2g-1)| = q(2g-1) + |{-p}-q(2g-1)|, \]
which simplifies to
\[ p-q(2g-1) = \pm(p+q(2g-1)). \]
But this is only possible if either $p=0$ or $q=0$, and in either case we have a contradiction.  Thus we must have had $g(T_{a,b}) \neq g(T_{c,d})$ after all.
\end{proof}

\subsection{Satellite knots}

In this subsection we show that if $K$ is a satellite L-space knot of the same genus as $T_{p,q}$, and if $S^3_r(K) \cong S^3_r(T_{p,q})$, then $r$ must be large enough so that their common $r$-surgery is an L-space.

\begin{lemma} \label{lem:prelim-satellite-surgeries}
Suppose for some nontrivial satellite knot $K=P(C)$ and slope $r\in\Q$ that the companion torus $\partial \nu(C)$ is compressible in $S^3_r(K)$.  Then $r\neq 0$, and if $S^3_r(K)$ is irreducible then 
\begin{itemize}
\item the pattern $P \subset S^1\times D^2$ has a nontrivial $S^1\times D^2$ surgery, and is therefore a 0-bridge braid or a 1-bridge braid, with winding number $w \geq 2$ in either case;
\item $r$-surgery on the pattern $P \subset S^1\times D^2$ is a solid torus;
\item and $S^3_r(K) \cong S^3_{r/w^2}(C)$.
\end{itemize}
\end{lemma}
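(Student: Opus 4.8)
The plan is to analyze $S^3_r(K)$ using the decomposition coming from the companion torus. Write $K = P(C)$ for the given satellite structure, so that $C$ is a nontrivial companion and $P \subset S^1\times D^2$ is geometrically essential --- not contained in a ball and not isotopic to the core. Set $E_C = S^3\setminus\nu(C)$, let $E_P = (S^1\times D^2)\setminus\nu(P)$ as in Lemma~\ref{lem:pattern-0-homology}, and let $V_r$ be the result of $r$-surgery on $P$ inside the solid torus, i.e.\ the Dehn filling of $E_P$ along $p\mu_P + q\lambda_P$ when $r = p/q$ with $\gcd(p,q)=1$ and $q>0$. Then the companion torus $T_C = \partial\nu(C)$ cuts $S^3_r(K)$ into $E_C \cup_{T_C} V_r$. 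Since $C$ is nontrivial, $T_C$ is incompressible in the knot exterior $E_C$, so the hypothesis that $T_C$ is compressible in $S^3_r(K)$ forces any compressing disk to lie inside $V_r$; in particular $\partial V_r$ is compressible.

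Assume now that $S^3_r(K)$ is irreducible. First I would transfer irreducibility to $V_r$: a $2$-sphere in $V_r$ bounds a ball $B$ in $S^3_r(K)$, and $B$ cannot contain $E_C$, as otherwise the nontrivial knot $C$ would lie inside a ball, so $B\subset V_r$. Since $E_C$ is also irreducible and $\partial V_r$ is compressible, the classical fact that an irreducible compact orientable $3$-manifold with compressible torus boundary must be a solid torus gives the second bullet. Gluing the solid torus $V_r$ back to $E_C$ along $T_C$ then exhibits $S^3_r(K)$ as a Dehn filling of $E_C$ along the slope $\delta\subset T_C$ that bounds a meridian disk of $V_r$. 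A direct homology computation, using $[\mu_C] = w[\mu_P]$, the relation $[\lambda_P] = w[\lambda_C]$ in $H_1(E_P)$ from Lemma~\ref{lem:pattern-0-homology}, and the surgery slope $p\mu_P+q\lambda_P$, shows first that $H_1(V_r)\cong\Z$ forces $\gcd(p,w)=1$, and then that $\delta = p\mu_C + qw^2\lambda_C$; hence $S^3_r(K)$ is $\tfrac{p}{qw^2}$-surgery on $C$, which is the third bullet $S^3_r(K)\cong S^3_{r/w^2}(C)$.

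For the first bullet, note that $P$ is a geometrically essential knot in $S^1\times D^2$ admitting a surgery, namely $V_r$, whose result is a solid torus; since $q\geq 1$ this surgery is not the meridional one, so Gabai's classification of knots in solid tori with solid-torus surgeries \cite{gabai-solid-tori} applies and tells us that $P$ is a $0$-bridge or $1$-bridge braid, and such braids fail to be geometrically essential unless their winding number is at least $2$, giving $w\geq 2$. Finally, to rule out $r=0$ unconditionally: if $r=0$ then $S^3_0(K)$ is irreducible by Gabai \cite{gabai-foliations3}, so the argument above applies and $V_0$ is a solid torus; but Lemma~\ref{lem:pattern-0-homology} computes $H_1(V_0)\cong\Z\oplus\Z/w\Z$, which is $\Z$ only if $w=1$, contradicting $w\geq 2$. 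Therefore $r\neq 0$.

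The step I expect to be the main obstacle is the appeal to Gabai's theorem on knots in solid tori with solid-torus surgeries and the extraction of the bound $w\geq 2$ from it; by comparison, the homology bookkeeping that pins down $\delta$ and the transfer of irreducibility from $S^3_r(K)$ to $V_r$ are routine, and the $r\neq 0$ case reduces to them once one invokes the irreducibility of $0$-surgeries.
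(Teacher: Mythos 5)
Your overall architecture is close to the paper's, and most of the pieces (the homological computation of the regluing slope, the appeal to Gabai for the $0$- or $1$-bridge braid classification once the surgered solid torus piece is known to be a solid torus, and the exclusion of $r=0$ via Lemma~\ref{lem:pattern-0-homology}) are fine. The genuine gap is the step where you transfer irreducibility from $S^3_r(K)$ to $V_r$. You assert that the ball $B$ bounded by a sphere $S\subset V_r$ cannot contain $E_C$ ``as otherwise the nontrivial knot $C$ would lie inside a ball.'' This is not a valid argument: $C$ is not a subset of $S^3_r(K)$ at all, and the object that would lie in $B$ is the knot \emph{exterior} $E_C$, which certainly does embed in a $3$-ball (it embeds in $S^3$ minus a point of the complementary solid torus). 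The usual reason one can pull irreducibility back across a separating torus is that an incompressible torus cannot lie inside a ball --- but here $T_C=\partial\nu(C)$ is compressible in $S^3_r(K)$ by hypothesis, so that argument is unavailable. If $E_C\subset\operatorname{int}(B)$, then capping $B$ off gives an embedding $E_C\hookrightarrow S^3$ whose complementary solid torus meets $V_r$ in a once-punctured solid torus, and one finds that $V_r$ is a connected sum $(S^1\times D^2)\,\#\,Y$ --- i.e.\ $V_r$ is \emph{reducible} in this scenario, so the subsequent appeal to ``irreducible with compressible torus boundary implies solid torus'' would not apply. Ruling this scenario out takes real work (for instance, one can show it forces $\mu_C$ to bound a disk in $V_r$, hence $w[\mu_P]=0$ in $H_1(V_r)$, which is impossible for $w\geq 1$; and the case $w=0$ must then be handled separately).

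The paper sidesteps this entirely: Gabai's theorem is quoted in its full strength, taking only compressibility of the boundary of the surgered solid torus $Z$ as input and yielding the dichotomy $Z\cong S^1\times D^2$ (with $P$ a $0$- or $1$-bridge braid) or $Z\cong (S^1\times D^2)\#Y$ with $Y$ a lens space (by Scharlemann); the second alternative is then excluded because it would make $S^3_r(K)\cong S^3_{r/w^2}(C)\#Y$ reducible, using that $C$ is nontrivial. If you reorganize your proof to apply Gabai and Scharlemann first and invoke irreducibility of $S^3_r(K)$ only at the end to kill the connected-sum case, the gap disappears. Your derivation of $r\neq 0$ from $H_1(V_0)\cong\Z\oplus(\Z/w\Z)$ is a perfectly good alternative to the paper's genus-count argument.
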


\begin{proof}
Since the companion torus $\partial \nu(C)$ in $S^3 \setminus \nu(K)$ compresses in $S^3_r(K)$, the result $Z$ of $r$-surgery on the pattern $P \subset S^1\times D^2$ must have compressible boundary.  Thus Gabai \cite[Theorem~1.1]{gabai-solid-tori} showed that either
\begin{itemize}
\item $Z \cong S^1\times D^2$, and $P$ is either a $0$-bridge braid or a $1$-bridge braid; or
\item $Z \cong (S^1\times D^2) \# Y$, where $Y$ is closed with $|H_1(Y)|$ finite and nontrivial.
\end{itemize}
Scharlemann \cite{scharlemann-reducible} showed that in the latter case $P$ must be a cable of a $0$-bridge braid, and $Y$ is then a lens space.  Thus in either case $P$ has nonzero winding number $w \geq 1$.

Next, we determine how the $S^1\times D^2$ summand of $Z$ is glued to the exterior of the companion $C$ to form $S^3_r(K)$.  Writing $V = S^1\times D^2$ for the solid torus containing $P$, we know from \cite[Lemma~3.3]{gordon} (plus the fact that $w \neq 0$) that in $V_r(P) \cong Z$, the peripheral class
\[ a[\{\pt\} \times \partial D^2] + bw^2[S^1\times \{\pt\}] \]
is nullhomologous.  Since $\{\pt\}\times \partial D^2$ and $S^1\times \{\pt\}$ in $\partial V$ are identified with the meridian $\mu_C$ and longitude $\lambda_C$ of $C$, respectively, it follows that the $S^1\times D^2$ summand of $Z$ fills the exterior of $C$ along a curve of slope $\frac{a}{bw^2} = \frac{r}{w^2}$.  In other words, we have
\[ S^3_r(K) \cong S^3_{r/w^2}(C) \text{ or } S^3_{r/w^2}(C) \# Y \]
depending on whether $Z$ is $S^1\times D^2$ or $(S^1\times D^2) \# Y$.

We now claim that $Z \cong S^1\times D^2$.  Indeed, if we assume instead that $Z \cong (S^1\times D^2) \# Y$, then we have seen that $S^3_r(K) \cong S^3_{r/w^2}(C) \# Y$; but $C$ being nontrivial means that $S^3_{r/w^2}(C) \not\cong S^3$, and so $S^3_r(K)$ would then be reducible.  Assuming from now on that $S^3_r(K)$ is irreducible, which is automatically satisfied if $r=0$ \cite{gabai-foliations3}, we then have $Z \cong S^1\times D^2$ as claimed, and so $S^3_r(K)$ is a Dehn filling of $C$.  It follows that
\[ S^3_r(K) \cong S^3_{r/w^2}(C), \]
and moreover that $P$ is a $0$-bridge braid or a $1$-bridge braid in $S^1\times D^2$.  The latter fact tells us that $w \geq 2$, because if we had $w=1$ then $P$ would have to be isotopic to the core of $S^1\times D^2$ and we know that it is not.

Finally, in the case $r=0$ we have $S^3_0(K) \cong S^3_0(C)$, so $g(K)=g(C)$ \cite{gabai-foliations3}.  On the other hand, since $w\geq 2$ we know that
\[ g(K) = g(P(U)) + w\cdot g(C) \geq 2g(C), \]
hence $g(C) \geq 2g(C)$ and we have a contradiction.  We conclude that $r \neq 0$ after all.
\end{proof}

\begin{lemma} \label{lem:torus-surgery-essential}
Let $T_{p,q}$ be a torus knot, with $|p|,|q| \geq 2$ relatively prime, and fix a slope $r \in \Q$.
\begin{enumerate}
\item The Dehn surgery $S^3_r(T_{p,q})$ is reducible if and only if $r=pq$. \label{i:torus-surgery-reducible}
\item If $S^3_r(T_{p,q})$ has an incompressible torus, then $r=0$ and $T_{p,q} = T_{\pm 2,3}$. \label{i:torus-surgery-atoroidal}
\end{enumerate}
\end{lemma}

\begin{proof}
Moser \cite{moser} showed that $S^3_{pq}(T_{p,q})$ is a connected sum of two lens spaces, of orders $p$ and $q$; and that if $r \neq pq$ then $S^3_r(T_{p,q})$ is small Seifert fibered with base orbifold $S^2(p,q,\Delta(pq,r))$, hence irreducible.  This establishes \eqref{i:torus-surgery-reducible}.

For \eqref{i:torus-surgery-atoroidal}, we can assume that $r \neq pq$ since a sum of lens spaces is atoroidal.  Since $S^3_r(T_{p,q})$ is small Seifert fibered it has no vertical incompressible tori, so it must be covered by $T^3$ \cite[Corollary~10.4.13]{martelli} and therefore admit a flat metric.  Up to orientation, the only such $3$-manifold with cyclic first homology is $S^3_0(T_{2,3})$, by \cite[Theorem~12.3.1]{martelli} and \cite[Proposition~10.3.38]{martelli}.  But if $S^3_r(T_{p,q}) \cong \pm S^3_0(T_{2,3})$ then $r=0$ for homological reasons, and by \cite{moser} the two sides of this identification are both Seifert fibered with base orbifolds $S^2(|p|,|q|,|pq|) \cong S^2(2,3,6)$, so we must have $\{|p|,|q|\} = \{2,3\}$ and therefore $T_{p,q}$ is a trefoil.
\end{proof}

\begin{proposition} \label{prop:compare-satellite-surgeries}
Fix a positive torus knot $T_{p,q}$, where $p,q\geq 2$ are relatively prime positive integers, and suppose for some nontrivial satellite knot $K$ and rational $r \in \Q$ that $S^3_r(K) \cong S^3_r(T_{p,q})$.  If $K$ is an L-space knot, then $r \geq 2g(K)-1$.
\end{proposition}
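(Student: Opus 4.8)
The plan is to show that the common manifold $S^3_r(K)\cong S^3_r(T_{p,q})$ is an L-space; since $K$ is an L-space knot, this is equivalent to the desired inequality $r\geq 2g(K)-1$ by \cite{osz-rational}. Along the way I would use Lemma~\ref{lem:prelim-satellite-surgeries} to realize this manifold as a surgery on the companion of $K$, and then Proposition~\ref{prop:hfhat-surgeries} to rule out the non-L-space case. Write $r=a/b$ with $\gcd(a,b)=1$ and $b>0$, and recall that an L-space knot of genus $g$ has $\hat{r}_0=\hat\nu=2g-1$. First, $r\neq 0$: otherwise $S^3_0(K)\cong S^3_0(T_{p,q})$, and as $K$ and $T_{p,q}$ are ffpf by Lemma~\ref{lem:ffpf-examples}, Theorem~\ref{thm:zero-surgery-fpf} would force $K=T_{p,q}$, contradicting that $K$ is a satellite. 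Second, by the classification of satellite L-space knots as sufficiently positive cables of L-space knots, $K$ is the $(w,d)$-cable $C_{w,d}(C)$ of a nontrivial L-space knot $C$, with $w\geq 2$, $\gcd(w,d)=1$, and $d\geq w(2g(C)-1)$; in particular $g(C)\geq 1$ and
\[ 2g(K)-1=(w-1)(d-1)+2w\,g(C)-1. \]

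Next I would show that $S^3_r(K)$ is irreducible and that the companion torus $T=\partial\nu(C)$ is compressible in it. If $S^3_r(K)$ were reducible then so is $S^3_r(T_{p,q})$, which by Moser's classification of torus-knot surgeries \cite{moser} forces $r=pq$; but $pq>pq-p-q=2g(T_{p,q})-1$, so $S^3_r(T_{p,q})$ is a connected sum of lens spaces, hence an L-space, and we would be done. If instead $T$ were incompressible in $S^3_r(K)\cong S^3_r(T_{p,q})$, then that manifold would be toroidal; but for $r\neq 0$ it is a rational homology sphere which, again by \cite{moser}, is a lens space or a small Seifert fibered space over $S^2$ with three exceptional fibers, and such a manifold is atoroidal --- its base orbifold admits no essential simple closed curve, ruling out vertical incompressible tori, and its Euler number is nonzero since $b_1=0$, ruling out horizontal ones. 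This contradiction shows $T$ compresses in $S^3_r(K)$, so Lemma~\ref{lem:prelim-satellite-surgeries} applies and yields
\[ S^3_r(K)\cong S^3_{r/w^2}(C). \]

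For the endgame, set $M=S^3_r(K)\cong S^3_{r/w^2}(C)$; comparing the orders of $H_1$ of the two surgery descriptions forces $\gcd(a,w)=1$, so $r/w^2=a/(bw^2)$ is in lowest terms. Since $K$ and $C$ are L-space knots, the statement ``$M$ is an L-space'' is equivalent both to $r\geq 2g(K)-1$ and to $r\geq w^2(2g(C)-1)$. If $M$ is an L-space we are done. Otherwise both inequalities fail, and Proposition~\ref{prop:hfhat-surgeries} gives
\[ \dim\hfhat(M)=2b(2g(K)-1)-a=2bw^2(2g(C)-1)-a, \]
so $2g(K)-1=w^2(2g(C)-1)$. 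Substituting the cable genus formula above and simplifying yields $d=w(2g(C)-1)$, whence $w\mid d$ --- contradicting $\gcd(w,d)=1$ since $w\geq 2$. Therefore $M$ is an L-space and $r\geq 2g(K)-1$.

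The step I expect to be most delicate is the reduction to a surgery on the companion. It rests on two external inputs: Lemma~\ref{lem:prelim-satellite-surgeries}, and the classification of satellite L-space knots, which is what supplies both the cable genus identity and the coprimality $\gcd(w,d)=1$ used to close the argument. By contrast, verifying that nonzero surgeries on torus knots are atoroidal --- which is what lets us force the companion torus to compress --- is routine Seifert-fibered-space topology, but should be recorded with care.
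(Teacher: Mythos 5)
There is a genuine gap: the ``classification of satellite L-space knots as sufficiently positive cables of L-space knots'' that you invoke is not a theorem, and is in fact false. Hom--Lidman--Vafaee \cite{hom-lidman-vafaee} produced satellite L-space knots whose patterns are $1$-bridge braids in the solid torus (Berge--Gabai knots that are not torus-knot patterns), so a satellite L-space knot need not be a cable. Since your endgame leans on the cable genus formula $2g(K)-1=(w-1)(d-1)+2w\,g(C)-1$ and on the coprimality $\gcd(w,d)=1$, the argument collapses in the non-cable case. What \emph{is} available --- and what the paper uses --- is the conclusion of Lemma~\ref{lem:prelim-satellite-surgeries} itself (via Gabai's solid-torus theorem) that the pattern $P$ is a $0$-bridge or $1$-bridge braid, together with the fact that $r$-surgery on $P$ yields a solid torus. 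The paper then splits into two cases: for $0$-bridge braids (cables) it uses Hom's criterion $m/n>2g(C)-1$ and Gordon's constraint $r=mn\pm\tfrac1k$ to get $r/n^2\geq 2g(C)-1$ directly; for $1$-bridge braids it uses Hom--Lidman--Vafaee's classification ($b+tw\geq w^2(2g(C)-1)$) together with their computation that solid-torus slopes on such patterns are $r=tw+b$ or $tw+b+1$, again giving $r/w^2\geq 2g(C)-1$. In both cases $S^3_{r/w^2}(C)$ is an L-space and the desired inequality follows.

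Your reduction to a surgery on the companion (irreducibility, atoroidality of nonzero torus-knot surgeries, compressibility of the companion torus, and the lowest-terms bookkeeping for $r/w^2$) matches the paper and is fine, and your endgame in the cable case --- equating the two expressions $2b(2g(K)-1)-a=2bw^2(2g(C)-1)-a$ from Proposition~\ref{prop:hfhat-surgeries} to force $2g(K)-1=w^2(2g(C)-1)$ and then contradicting $\gcd(w,d)=1$ --- is a correct and genuinely different alternative to the paper's direct slope estimate in that case. But it does not extend to the $1$-bridge braid case without an analogous genus identity for those patterns, so the proof as written is incomplete.
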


\begin{proof}
We write $K = P(C)$, with pattern $P$ and companion $C$.  If $r=pq$ then $S^3_r(K) \cong S^3_r(T_{p,q})$ is a connected sum of two lens spaces, of orders $p$ and $q$ \cite{moser}, so $S^3_r(K)$ is an L-space and therefore $r \geq 2g(K)-1$.  Thus we can assume from now on that $r \neq pq$.  In particular, Lemma~\ref{lem:torus-surgery-essential} says that $S^3_r(K) \cong S^3_r(T_{p,q})$ is irreducible, and that it is atoroidal unless $S^3_r(K) \cong S^3_0(T_{2,3})$.  In the latter case we must have $r=0$, and then $K$ must be fibered of genus 1 \cite{gabai-foliations3}; but the only genus-1 fibered knots are the trefoils and the figure eight, and none of these are satellites, so in fact $S^3_r(K) \cong S^3_r(T_{p,q})$ must be atoroidal.

We have now shown that the companion torus must compress in $S^3_r(K)$, so we can apply Lemma~\ref{lem:prelim-satellite-surgeries} to see that $r$-surgery on $P$ is a solid torus -- in other words, that $P$ is a \emph{Berge--Gabai knot} -- and that
\[ S^3_r(K) \cong S^3_{r/w^2}(C), \]
where $w \geq 2$ is the winding number of $P$.  As a consequence of $P$ being a Berge--Gabai knot, Lemma~\ref{lem:prelim-satellite-surgeries} tells us via \cite{gabai-solid-tori} that the pattern $P$ is either a torus knot (i.e., a $0$-bridge braid) or a $1$-bridge braid; we will handle each of these cases separately, bearing in mind that we are not interested in arbitrary $1$-bridge braids but only those with solid torus surgeries.

First, if $P$ is a torus knot then $K$ is some $(m,n)$-cable of the companion $C$, where $\gcd(m,n)=1$ and $n = w \geq 2$.  Then since $K$ is an L-space knot, Hom \cite{hom-cabling} proved that $C$ is an L-space knot and that
\[ \frac{m}{n} \geq 2g(C) - 1; \]
in fact, the inequality must be strict since $n \nmid m$, so $\frac{m-1}{n} \geq 2g(C)-1$.  Moreover, we know from \cite[Corollary~7.3]{gordon} that $S^3_r(K)$ is only irreducible and atoroidal if $r=mn\pm\frac{1}{k}$ for some $k\in\Z$, in which case
\[ S^3_r(K) \cong S^3_{r/n^2}(C). \]
Now we have
\[ \frac{r}{n^2} = \frac{mn\pm\frac{1}{k}}{n^2} \geq \frac{m}{n} - \frac{1}{n^2} \geq \frac{m-1}{n} \geq 2g(C)-1, \]
so $S^3_r(K) \cong S^3_{r/n^2}(C)$ is an L-space, hence $r \geq 2g(K)-1$.

More generally, Hom--Lidman--Vafaee \cite[Theorem~1.3]{hom-lidman-vafaee} classified the L-space satellite knots whose patterns are Berge--Gabai knots.  Each Berge--Gabai knot $P$ is described by a triple of integers
\[ (w,b,t), \qquad 0 \leq b \leq w-2, \]
which are the winding number, bridge width, and twist number of $P$ respectively; they show that $K = P(C)$ is an L-space knot if and only if $C$ is an L-space knot and $b+tw \geq w^2(2g(C)-1)$.  Moreover, if the Berge--Gabai knot $P$ is specifically a 1-bridge braid, then \cite[Lemma~2.1]{hom-lidman-vafaee} says that $r$-surgery on $P$ is a solid torus only if
\[ r = tw+d, \qquad d \in \{b,b+1\}. \]
So in this case we have
\[ \frac{r}{w^2} \geq \frac{b+tw}{w^2} \geq 2g(C)-1 \]
and hence $S^3_r(K) \cong S^3_{r/w^2}(C)$ is an L-space.  This means that $r \geq 2g(K)-1$ once again, and we are done.
\end{proof}

\subsection{Hyperbolic knots}

In this subsection we determine when a fibered hyperbolic knot can have some surgery in common with a torus knot.

\begin{proposition} \label{prop:hyperbolic}
If $S^3_0(K) \cong S^3_0(T_{p,q})$ but $K$ is not isotopic to $T_{p,q}$, then $K$ is a fibered hyperbolic knot, and its monodromy is not veering.  In particular, neither $K$ nor its mirror is an L-space knot.
\end{proposition}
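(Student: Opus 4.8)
The plan is to first determine the topological type of $K$ and then feed it into the results already established in this section. We may assume $T_{p,q}$ is nontrivial and not a trefoil: otherwise $0$ is a characterizing slope for $T_{p,q}$ by Gabai \cite{gabai-foliations3}, so no knot $K \neq T_{p,q}$ has $S^3_0(K) \cong S^3_0(T_{p,q})$ and the statement is vacuous. Then $|p|,|q|\geq 2$ and $\{|p|,|q|\}\neq\{2,3\}$, so $(|p|-1)(|q|-1)>2$, equivalently the orbifold Euler characteristic $-1+\tfrac{1}{|p|}+\tfrac{1}{|q|}+\tfrac{1}{|pq|}$ of $S^2(|p|,|q|,|pq|)$ is negative. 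By Moser's classification of torus knot surgeries \cite{moser}, $S^3_0(T_{p,q})$ is then Seifert fibered over this hyperbolic base orbifold, and I would record that it is atoroidal: any incompressible torus in it would be vertical or horizontal, but $S^2(|p|,|q|,|pq|)$ has no essential simple closed curve (having only three cone points) and no horizontal torus (having negative orbifold Euler characteristic). Of course $S^3_0(T_{p,q})$ is also not hyperbolic, being Seifert fibered.

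Next I would show that $K$ is a fibered hyperbolic knot. Since $T_{p,q}$ is fibered, $S^3_0(K)\cong S^3_0(T_{p,q})$ fibers over the circle, so $K$ is fibered \cite{gabai-foliations3}, with fiber genus $g(T_{p,q})\geq 1$ and hence nontrivial. By geometrization for Haken manifolds \cite[Corollary~2.5]{thurston-kleinian}, $K$ is a torus knot, a satellite knot, or a hyperbolic knot. The torus-knot case is excluded by Lemma~\ref{lem:torus-knots-different-0-surgery}, which would force $K=T_{p,q}$. In the satellite case, write $K=P(C)$ with $C$ a nontrivial companion; the companion torus $\partial\nu(C)$ cannot compress into the companion exterior $S^3\setminus\nu(C)$ since $C$ is knotted, and it cannot compress into the surgered side either, since by Lemma~\ref{lem:prelim-satellite-surgeries} such a compression would force the slope $0$ to be nonzero. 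Thus $\partial\nu(C)$ would be an incompressible torus in $S^3_0(K)$, contradicting the atoroidality recorded above. Hence $K$ is hyperbolic (and fibered), as claimed.

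Finally, since $S^3_0(K)\cong S^3_0(T_{p,q})$ is not hyperbolic, Corollary~\ref{cor:not-veering} gives that the monodromy of $K$ is not veering. For the ``in particular'' clause: if $K$ were an L-space knot then, being nontrivial, it would be ffpf by Lemma~\ref{lem:ffpf-examples} and would therefore have veering monodromy by Lemma~\ref{lem:hfk-outer-monodromy}, a contradiction; so $K$ is not an L-space knot. And $\mirror{K}$ is again a fibered hyperbolic knot with $S^3_0(\mirror{K})\cong-S^3_0(T_{p,q})$ still Seifert fibered and hence not hyperbolic, so running Corollary~\ref{cor:not-veering} and then Lemmas~\ref{lem:ffpf-examples} and~\ref{lem:hfk-outer-monodromy} for $\mirror{K}$ shows $\mirror{K}$ is not an L-space knot either. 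I expect the only genuinely delicate point to be the atoroidality of $S^3_0(T_{p,q})$: one must identify Moser's Seifert base orbifold correctly and recognize that the trefoil --- where this orbifold is Euclidean and $S^3_0$ is a torus bundle over $S^1$, hence toroidal --- is the unique torus-knot exception, handled separately above via Gabai's characterizing-slope theorem.
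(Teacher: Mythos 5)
Your proof is correct and follows essentially the same route as the paper's: fiberedness via Gabai, the geometrization trichotomy, Lemma~\ref{lem:torus-knots-different-0-surgery} to exclude torus knots, atoroidality of $S^3_0(T_{p,q})$ played against Lemma~\ref{lem:prelim-satellite-surgeries} to exclude satellites, and then Corollary~\ref{cor:not-veering} combined with Lemmas~\ref{lem:ffpf-examples} and~\ref{lem:hfk-outer-monodromy} for the veering and L-space conclusions. The one point where you genuinely diverge is worth keeping: the paper justifies atoroidality by asserting that $S^3_0(T_{p,q})$ is small Seifert fibered ``and hence irreducible and atoroidal,'' but that implication fails when $T_{p,q}$ is a trefoil, since $S^3_0(T_{2,\pm3})$ is a torus bundle over $S^1$ (Seifert fibered over the Euclidean orbifold $S^2(2,3,6)$ with Euler number zero) and its horizontal torus fiber is essential. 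Your preliminary reduction --- disposing of the unknot and the trefoils outright via Gabai's theorem that $0$ is a characterizing slope for them, and then checking that $(|p|-1)(|q|-1)>2$ makes the base orbifold $S^2(|p|,|q|,|pq|)$ hyperbolic, so that there are no vertical or horizontal essential tori --- patches this cleanly and is the more careful version of the argument. The remaining steps (incompressibility of the companion torus from both sides, and the mirror argument run through Corollary~\ref{cor:not-veering} applied directly to $\mirror{K}$ rather than through the symmetry of the veering condition) are only cosmetically different from the paper's.
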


\begin{proof}
Since 0-surgery detects whether a knot is fibered \cite{gabai-foliations3}, and $T_{p,q}$ is fibered, we know that $K$ must be fibered as well.  Now if $K$ is not hyperbolic then it is either a torus knot or a satellite knot \cite[Corollary~2.5]{thurston-kleinian}.

If $K$ is a torus knot other than $T_{p,q}$, then Lemma~\ref{lem:torus-knots-different-0-surgery} says that $S^3_0(K) \not\cong S^3_0(T_{p,q})$.  Similarly, if $K$ is a satellite knot, then we cannot have $S^3_0(K) \cong S^3_0(T_{p,q})$.  Assuming otherwise, Lemma~\ref{lem:prelim-satellite-surgeries} says that the companion torus of $K=P(C)$ is incompressible in $S^3_0(K)$, so then $S^3_0(T_{p,q})$ must also be toroidal, whence $S^3_0(K) \cong S^3_0(T_{\pm2,3})$ by Lemma~\ref{lem:torus-surgery-essential}.  But then $K$ must be a genus-1 fibered knot, since it has the same $0$-surgery as another genus-1 fibered knot \cite{gabai-foliations3}, and we have a contradiction because the only genus-1 fibered knots are not satellites.

The only remaining possibility if $K\neq T_{p,q}$ is that it must be a fibered hyperbolic knot.  In this case, since $S^3_0(K) \cong S^3_0(T_{p,q})$ is small Seifert fibered, Corollary~\ref{cor:not-veering} tells us that the monodromy $h$ of $K$ cannot be veering, as claimed.  Moreover, if $K$ were an L-space knot then $h$ would have to be veering by Lemma~\ref{lem:ffpf-examples}, so $K$ cannot be an L-space knot.  Similarly, if the mirror $\mirror{K}$ were an L-space knot then its monodromy $h^{-1}$ would be veering, and then $h$ would be veering, so $\mirror{K}$ cannot be an L-space knot either.
\end{proof}

Proposition~\ref{prop:hyperbolic} has the following consequence, for which we recall that L-space knots have right-veering monodromy.

\begin{proposition} \label{prop:compare-hyperbolic-surgeries}
Let $T_{p,q}$ be a torus knot, and suppose there is some rational $r\in\Q$ and some fibered hyperbolic knot $K \subset S^3$ with right-veering monodromy such that $S^3_r(K) \cong S^3_r(T_{p,q})$.  Then $0 < r \leq 4g(K)$, and if $K$ is additionally an L-space knot then $r \neq 4g(K)$.
\end{proposition}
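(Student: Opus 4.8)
The plan is to compare the geometry and Heegaard Floer homology of $S^3_r(K)$ and $S^3_r(T_{p,q})$, excluding in turn the slopes $r\le 0$, the slopes $r>4g(K)$, and finally $r=4g(K)$ when $K$ is an L-space knot. Note first that $K\neq T_{p,q}$, indeed $K$ is not isotopic to any torus knot, since it is hyperbolic. Because the monodromy $h$ of $K$ is right-veering it is in particular veering, and since $K$ is hyperbolic its outermost component $\varphi_0$ is pseudo-Anosov with $\Gamma=\emptyset$; hence Theorem~\ref{thm:fdtc-veering} gives $c(h)\neq 0$ and Proposition~\ref{prop:0-or-1/n} gives $|c(h)|<1$, so the fractional Dehn twist coefficient satisfies $0<c(h)<1$, with the positive sign coming from right-veering. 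This positivity is exactly what will force $r>0$, and the smallness of $c(h)$ together with the fact that the invariant foliations of $\varphi_0$ have boundedly many (at most $4g(K)-2$) boundary prongs is what I expect to yield $r\le 4g(K)$.

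The case $r=0$ is immediate: Proposition~\ref{prop:hyperbolic} applies since $K\neq T_{p,q}$, and it would force $h$ to be non-veering, contradicting the right-veering hypothesis; so $r\neq 0$. For the remaining slopes I would first invoke Moser's classification of surgeries on torus knots: writing $r=a/b$ in lowest terms, $S^3_r(T_{p,q})$ is a connected sum of two lens spaces if $a=bpq$, a lens space if $|a-bpq|=1$, and otherwise Seifert fibered over $S^2$ with three exceptional fibers of orders $p,q,|a-bpq|$ --- in all cases a non-hyperbolic graph manifold. Thus $r$ is an exceptional slope for the hyperbolic knot $K$, and I would then appeal to the analysis of exceptional surgeries on fibered hyperbolic knots --- Gabai's sutured-manifold hierarchy together with Roberts' taut foliations (and essential laminations) in punctured-surface bundles, phrased via the degeneracy slope $\delta_K$ --- which constrains the exceptional slopes to a bounded interval lying on the side of the longitude determined by the sign of $c(h)$. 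Since $c(h)>0$ this interval is positive, giving $r>0$, and its upper endpoint is controlled by $\delta_K$, which is bounded linearly in $g(K)$, giving $r\le 4g(K)$. In the subcases where $S^3_r(T_{p,q})$ is a lens space or a connected sum of lens spaces I would instead use sharper classical inputs: the cyclic surgery theorem (respectively Gordon--Luecke's integrality of reducible surgeries) forces $r\in\Z$; lens-space surgery slopes on hyperbolic knots are at most $4g(K)-1$ by Goda--Teragaito; and a reducible surgery on a fibered knot --- in particular on an L-space knot, where one can argue directly from knot Floer homology --- would force $K$ to be a cable, contradicting hyperbolicity.

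For the refinement $r\neq 4g(K)$ when $K$ is additionally an L-space knot, I would use that $S^3_{4g(K)}(K)$ is then an irreducible L-space, since $4g(K)\ge 2g(K)-1$. Combining $S^3_{4g(K)}(K)\cong S^3_{4g(K)}(T_{p,q})$ with the large-surgery formula and Proposition~\ref{prop:hfhat-surgeries} (matching $d$-invariants and ranks at this integral slope forces $g(K)=g(T_{p,q})$ and $\Delta_K=\Delta_{T_{p,q}}$), I would rule out each of Moser's cases at this extremal slope: the reducible case is incompatible with irreducibility (no reducible surgery on a hyperbolic knot in the relevant situation); the lens-space case contradicts the Goda--Teragaito bound $r\le 4g(K)-1<4g(K)$; and the remaining small-Seifert-fibered case is excluded by the sharper, strict form of the degeneracy-slope bound in the genuine Seifert case together with the genus equality just obtained.

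The step I expect to be the main obstacle is controlling the interval of exceptional slopes precisely enough in the generic case $|a-bpq|\ge 2$, where $S^3_r(T_{p,q})$ is a genuine small Seifert fibered space: here one must use the boundary prongs of the pseudo-Anosov monodromy's invariant foliations and the degeneracy slope, together with Roberts-type taut foliation/lamination results, to extract both the positivity of $r$ from $c(h)>0$ and the precise upper bound $r\le 4g(K)$ (and its strictness when $K$ is an L-space knot). This is where the bulk of the work, and the most delicate bookkeeping between the surface dynamics and the surgery geometry, will be concentrated.
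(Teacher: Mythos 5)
Your skeleton matches the paper's: exclude $r=0$ via Proposition~\ref{prop:hyperbolic} (identical to the paper's step), get $0<r\leq 4g$ from the fact that $S^3_r(T_{p,q})$ is never hyperbolic by \cite{moser}, and then do a reducible/lens/Seifert case analysis at $r=4g$. But the central quantitative input is left unproved. The statement you are trying to extract from degeneracy slopes, Roberts-type foliations, and the bound on boundary prongs --- namely that every non-hyperbolic surgery on a fibered hyperbolic knot with right-veering monodromy has slope in $[0,4g]$, \emph{and} that $S^3_{4g}(K)$ is not a small Seifert fibered L-space --- is precisely Ni's Theorem~1.1 in \cite{ni-exceptional}, which the paper simply cites. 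You explicitly flag this as ``the main obstacle'' where ``the bulk of the work'' lies and do not carry it out, so the proof is incomplete exactly at the step that does all the work. (As a side remark, the prong count for a once-punctured genus-$g$ fiber is at most $4g$, not $4g-2$, by the Euler--Poincar\'e formula, which is why the bound is $4g$ rather than something smaller.)

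There is also a substantive error in how you propose to close the $r=4g$ case: you invoke ``the sharper, strict form of the degeneracy-slope bound in the genuine Seifert case,'' i.e.\ $r<4g$ whenever the surgery is Seifert fibered. That is stronger than what Ni proves and is not available; small Seifert fibered surgeries at slope exactly $4g$ are not excluded. What is true, and what the paper uses, is only that $S^3_{4g}(K)$ cannot be a small Seifert fibered \emph{L-space}, which is why the L-space hypothesis on $K$ is needed for the refinement $r\neq 4g$: one first rules out the reducible case via \cite{matignon-sayari} (using $4g>2g-1$; note your appeal to ``reducible surgery forces a cable'' is the cabling conjecture, which is open in general) and the lens space case via the chain \cite{baker-smallgenus}, \cite{greene-lens}, \cite{goda-teragaito-lens} (Goda--Teragaito alone only treats Berge knots), leaving the small Seifert fibered case, which contradicts Ni's theorem once $S^3_{4g}(K)$ is an L-space. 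Your $d$-invariant/Alexander-polynomial detour at this slope is not needed and is not justified as stated.
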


\begin{proof}
Write $g=g(K)$ for convenience.  We know that $S^3_r(T_{p,q})$ is always either reducible, a lens space, or small Seifert fibered \cite{moser}, so $S^3_r(K)$ is not hyperbolic.  Since $K$ is fibered and hyperbolic with right-veering monodromy, Ni \cite[Theorem~1.1]{ni-exceptional} proved that $0 \leq r \leq 4g$, and that $S^3_{4g}(K)$ is not a small Seifert fibered L-space.  Now Proposition~\ref{prop:hyperbolic} says that $K$ cannot have right-veering monodromy if $S^3_0(K) \cong S^3_0(T_{p,q})$, so we must have $r\neq 0$ as well.

In the case $r=4g$, we observe that $S^3_{4g}(K)$ is neither reducible nor a lens space.  Indeed, if we assume otherwise then its universal cover is not contractible, so it does not admit an essential lamination \cite[Theorem~6.1]{gabai-oertel}.  On the other hand, the monodromy of $K$ is pseudo-Anosov, and the suspension of its stable lamination has some degeneracy slope $d$, which satisfies either $|d| \leq 4g-2$ or $d=\frac{n}{0}$ for some integer $n\geq 1$ \cite[Theorem~8.8]{gabai-problems}.  Now by \cite[Theorem~5.3]{gabai-oertel} this suspension gives rise to an essential lamination on $S^3_{4g}(K)$ as long as $\Delta(d,4g) \geq 2$, so we must have $\Delta(d,4g) \leq 1$, hence $d = \frac{1}{0}$.  On the other hand, since $K$ has right-veering monodromy $h$, we know that the fractional Dehn twist coefficient $c(h) = \frac{1}{d}$ is strictly positive \cite[Proposition~3.1]{hkm-veering} and this is a contradiction.

Thus $S^3_{4g}(K) \cong S^3_{4g}(T_{p,q})$ implies that $S^3_{4g}(K)$ can only be small Seifert fibered.  If $K$ were an L-space knot, then all surgeries on $K$ of slope at least $2g-1$ would be L-spaces; but then $S^3_{4g}(K)$ would be a small Seifert fibered L-space, contradicting \cite{ni-exceptional} as mentioned above, so $K$ cannot be an L-space knot after all.
\end{proof}

Proposition~\ref{prop:compare-hyperbolic-surgeries} was the last remaining step in the proof of Theorem~\ref{thm:negtracetorus-restated}, so the latter proof is now complete. \hfill\qed

\section{Positive traces and $T_{2,2g+1}$ torus knots} \label{sec:t2n-positive}

As a special case of Theorem~\ref{thm:main}, we know that all torus knots are characterized by their 0-traces, but the proof makes essential use of the fact that $S^3_0(T_{p,q})$ is a fibered 3-manifold.  Similarly, Theorem~\ref{thm:negtracetorus} tells us that positive torus knots are characterized by any of their negative traces, but reveals very little about their positive traces.  However, in some cases we can say a bit more about these positive traces.  In this section we will prove Theorem~\ref{thm:positive-trace-t2n-main}, which we restate here:

\begin{theorem} \label{thm:positive-trace-t2n}
Suppose for some knot $K$ and  positive integers $n$ and $g$ that $X_n(K) \cong X_n(T_{2,2g+1}).$  Then either $K = T_{2,2g+1},$ or   $1 \leq n \leq 4g-1$ and $K$ is a hyperbolic knot such that \[\hfkhat(K) \cong \hfkhat(T_{2,2g+1})\] as bigraded vector spaces.
\end{theorem}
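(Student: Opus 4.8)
The plan is to run the same torus/satellite/hyperbolic trichotomy as in the proof of Theorem~\ref{thm:negtracetorus-restated}, but now in the positive-$n$ range, and to extract knot Floer data in the hyperbolic case. Comparing the boundaries of the two traces gives $S^3_n(K)\cong S^3_n(T_{2,2g+1})$, and since $g\ge 1$ the knot $T_{2,2g+1}$ is a nontrivial L-space knot, so Theorem~\ref{thm:lspace-detection-main} tells us that $K$ is also an L-space knot with $g(K)=g$; in particular $K$ is nontrivial, and fibered with right-veering monodromy. If $g=1$ then $K$ is a genus-$1$ L-space knot, hence the right-handed trefoil $T_{2,3}=T_{2,2g+1}$ \cite{ghiggini}, and we are done; so assume $g\ge 2$. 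By geometrization for Haken manifolds \cite{thurston-kleinian}, $K$ is a torus knot, a satellite knot, or a hyperbolic knot. If $K=T_{a,b}$ is a torus knot with $K\ne T_{2,2g+1}$, then Proposition~\ref{prop:compare-torus-surgeries} applied to $S^3_n(K)\cong S^3_n(T_{2,2g+1})$ would force $g(K)\ne g$, a contradiction; hence $K=T_{2,2g+1}$.

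Next I would rule out the satellite case. Since $K$ is an L-space knot, Proposition~\ref{prop:compare-satellite-surgeries} forces $n\ge 2g-1$, so $S^3_n(K)\cong S^3_n(T_{2,2g+1})$ is an L-space; by Moser's classification \cite{moser} it is a connected sum of two lens spaces (if $n=4g+2$), a lens space (if $n=4g+1$ or $4g+3$), or a small Seifert fibered space over $S^2(2,2g+1,|4g+2-n|)$ otherwise. In the first two cases a satellite $K=P(C)$ with such a surgery must be a cable $C_{a,b}(C_0)$ of a nontrivial knot $C_0$ (by the cabling conjecture for satellites with reducible surgeries, respectively the classification of satellites with lens space surgeries), and then chasing the lens space summands pins down $\{a,b\}$ in terms of the orders $2,2g+1$ (when $n=4g+2$) or forces $b$ to be too small to be a winding number (when $n=4g\pm1$); in either case $g(K)=g(T_{a,b})+b\,g(C_0)=g$ with $g(C_0)\ge 1$ gives a contradiction. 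In the Seifert fibered case the surgered manifold is irreducible and atoroidal, so the companion torus of $K$ compresses and Lemma~\ref{lem:prelim-satellite-surgeries} gives $S^3_n(K)\cong S^3_{n/w^2}(C)$ for a $0$- or $1$-bridge braid pattern of winding number $w\ge 2$ over a nontrivial L-space companion $C$, with $\gcd(n,w^2)=1$ and $n$ confined to one of only a few possible slopes; comparing the Seifert invariants of the two sides — recursing on the JSJ decomposition if $C$ is itself a satellite, and invoking Proposition~\ref{prop:compare-hyperbolic-surgeries}-type slope bounds if $C$ is hyperbolic — again contradicts $g(K)=g(P(U))+w\,g(C)=g$ whenever $w\ge 2$. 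Hence $K$ must be hyperbolic.

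Finally, suppose $K$ is hyperbolic. Since it is an L-space knot it has right-veering monodromy, so Proposition~\ref{prop:compare-hyperbolic-surgeries} applied to $S^3_n(K)\cong S^3_n(T_{2,2g+1})$ gives $0<n\le 4g$ with $n\ne 4g$; hence $1\le n\le 4g-1$. It remains to prove $\hfkhat(K)\cong\hfkhat(T_{2,2g+1})$; since both knots are L-space knots of genus $g$, their knot Floer homologies are the staircase complexes determined by their Alexander polynomials \cite{osz-lens}, so it is enough to show $\Delta_K=\Delta_{T_{2,2g+1}}$. The oriented homeomorphism $S^3_n(K)\cong S^3_n(T_{2,2g+1})$ identifies the absolutely $\Q$-graded, $\spc$-decomposed Heegaard Floer groups of the two surgeries; for an L-space knot this graded group is computed from the staircase $\cfkinfty$ by the integer surgery formula, and the resulting data (concretely, the correction terms, via the $V_s$ and hence the torsion coefficients) determines the staircase, hence $\Delta_K$ — up to the automorphism of $H_1\cong\Z/n$ induced by the homeomorphism, which one rules out using the symmetry $d(\mathfrak{s})=d(\bar{\mathfrak{s}})$ of the surgery correction terms (or, when $n\ge 2g+1$, the cleaner fact that the Reidemeister--Turaev torsion of $S^3_n(K)$ recovers $\Delta_K\bmod(t^n-1)$, and hence $\Delta_K$ itself, since $\Delta_K$ has width $2g+1\le n$). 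Thus $\Delta_K=\Delta_{T_{2,2g+1}}$, and $\hfkhat(K)\cong\hfkhat(T_{2,2g+1})$ follows.

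The two steps I expect to require the most care are the exclusion of the satellite case — since $n\ge 2g-1$ alone is not a contradiction, this needs Moser's classification of torus knot surgeries together with Lemma~\ref{lem:prelim-satellite-surgeries} and a somewhat involved genus-and-$H_1$ bookkeeping through the reducible, lens space, and small Seifert fibered sub-cases — and the extraction of $\Delta_K$ from a single surgery on an L-space knot in the hyperbolic case, especially for the small coefficients $1\le n\le 2g$, where the torsion argument does not apply and one must run the surgery formula for staircase complexes while controlling the $\spc$-labeling ambiguity.
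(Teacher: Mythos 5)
Your skeleton (boundary comparison, Theorem~\ref{thm:lspace-detection-main} to get that $K$ is an L-space knot of genus $g$, the torus/satellite/hyperbolic trichotomy, and Proposition~\ref{prop:compare-hyperbolic-surgeries} for the bound $1\leq n\leq 4g-1$) matches the paper, but the two steps you flag as delicate are exactly where the argument is not actually carried out, and the paper resolves both with a single device you don't use. The key move is to prove $\Delta_K(t)=\Delta_{T_{2,2g+1}}(t)$ \emph{first}, for every $n\geq 1$ and before any geometric case analysis: the homeomorphism $S^3_n(K)\cong S^3_n(T_{2,2g+1})$ forces equality of Casson--Walker invariants, hence $\Delta''_K(1)=\Delta''_{T_{2,2g+1}}(1)$ by the surgery formula, and Proposition~\ref{prop:casson-t2n} (built on Krcatovich's staircase constraints, Theorem~\ref{thm:krcatovich}) shows that among genus-$g$ L-space knots the value $\tfrac{1}{2}\Delta''_K(1)$ is at most $\tfrac{1}{2}g(g+1)$ with equality only for the Alexander polynomial of $T_{2,2g+1}$. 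This immediately gives $\hfkhat(K)\cong\hfkhat(T_{2,2g+1})$ by \cite{osz-lens}, with no $\spc$-labeling issues at all. Your alternative route — recovering the staircase from correction terms or Reidemeister--Turaev torsion of the surgered manifold — is a genuine gap as written: for $1\leq n<2g-1$ the surgery is not an L-space, the $d$-invariants do not capture the full surgery formula output, and the relabeling ambiguity you mention is precisely the hard combinatorial obstruction in the characterizing-slopes literature (note that Theorem~\ref{thm:torus-pairs} exhibits distinct L-space knots with a common positive surgery, so ``one surgery determines the staircase'' cannot hold without leaning hard on the genus constraint, which you never actually exploit). You assert the conclusion rather than prove it.

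The satellite exclusion has the same character. Your sketch (Moser's classification, Lemma~\ref{lem:prelim-satellite-surgeries}, ``recursing on the JSJ decomposition,'' and ``genus-and-$H_1$ bookkeeping'') never produces a concrete contradiction: in the small Seifert fibered subcase you would need to explain why $S^3_{n/w^2}(C)\cong S^3_n(T_{2,2g+1})$ is incompatible with $g(P(U))+wg(C)=g$, and nothing you write does that. The paper instead deduces from $\Delta_K=\Delta_{T_{2,2g+1}}$ that $K$ cannot be a satellite (Proposition~\ref{prop:satellite-t2n}): a satellite L-space knot has braided pattern of winding number $w\geq 2$, Lemma~\ref{lem:casson-satellite} together with Proposition~\ref{prop:casson-t2n} forces $w\geq 2h+1$ where $h=g(P(U))$, the shape of $\Delta_{P(U)}(t)\Delta_C(t^w)$ forces $w=2h+1$ (else $\Delta_K$ has a vanishing interior coefficient, impossible for $\Delta_{T_{2,2g+1}}$), and Baker--Motegi's constraint $w(2k-1)<\tfrac{2h-1+w}{w-1}$ then yields a contradiction. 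The torus-knot subcase and the final application of Proposition~\ref{prop:compare-hyperbolic-surgeries} in your writeup are fine, but without the Casson--Walker/Krcatovich input both the satellite exclusion and the identification of $\hfkhat(K)$ remain unproven.
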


\subsection{Alexander polynomials of L-space knots}

We begin with a computation that follows from useful restrictions found by Krcatovich \cite{krcatovich} on the Alexander polynomials of L-space knots.

\begin{theorem}[{\cite[Theorem~1.6]{krcatovich}}] \label{thm:krcatovich}
If $K \subset S^3$ is an L-space knot of genus $g \geq 1$, then its Alexander polynomial has the form
\begin{equation} \label{eq:krcatovich-lspace}
\Delta_K(t) = (1-t^{-1})\sum_{i=0}^\infty t^{a_i},
\end{equation}
where $\{a_i\}$ is a strictly decreasing sequence with $a_0 = g$ and $a_i=-i$ for all $i \geq g$, and moreover
\[ a_i \leq g-2i \]
for $0 \leq i \leq g$.
\end{theorem}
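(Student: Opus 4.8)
The plan is to start from Ozsv\'ath--Szab\'o's description of the Alexander polynomials of L-space knots \cite{osz-lens}: since $K$ is an L-space knot of genus $g$, we may write
\[ \Delta_K(t) = \sum_{k=0}^{2m}(-1)^k t^{n_k}, \qquad n_0 > n_1 > \dots > n_{2m}, \]
with $n_0 = g$, $n_{2m} = -g$, and $n_k = -n_{2m-k}$ (from the symmetry $\Delta_K(t)=\Delta_K(t^{-1})$). Refining this, $CFK^\infty(K)$ is a \emph{staircase complex} whose alternating vertical and horizontal segments have lengths $g_k := n_{k-1}-n_k \ge 1$, and whose symmetry makes the tuple $(g_1,\dots,g_{2m})$ palindromic. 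I would take this structure as my starting point.

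Next I would extract the shape \eqref{eq:krcatovich-lspace} by a direct manipulation. Expanding $\tfrac{1}{1-t^{-1}} = \sum_{j\ge0}t^{-j}$ as a power series in $t^{-1}$ gives
\[ \frac{\Delta_K(t)}{1-t^{-1}} \;=\; \sum_{l\in\Z} t^l \!\!\sum_{k:\, n_k\ge l}\!\!(-1)^k, \]
and since the $n_k$ are strictly decreasing the inner sum equals $1$ when $\#\{k:n_k\ge l\}$ is odd and $0$ otherwise. Listing $S = \{\, l : \#\{k:n_k\ge l\}\ \text{odd}\,\}$ in decreasing order as $a_0 > a_1 > \dots$ then yields $\Delta_K(t) = (1-t^{-1})\sum_{i\ge0}t^{a_i}$ with $a_0 = n_0 = g$; every integer $\le -g$ lies in $S$ because then $\#\{k:n_k\ge l\} = 2m+1$ is odd, and a short count using $\Delta_K'(1)=\sum_k(-1)^k n_k = 0$ (a consequence of $n_k=-n_{2m-k}$) shows that exactly $g+1$ elements of $S$ are $\ge -g$, so $a_i=-i$ for all $i\ge g$. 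This establishes every assertion except the inequality $a_i\le g-2i$.

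The inequality $a_i \le g-2i$ for $0\le i\le g$ is the substantive point, and this is where Krcatovich's analysis \cite{krcatovich} enters. Through the bijection $S=\{a_i\}$ one checks that it is equivalent to the ``front-loading'' estimates
\[ g_1+g_3+\dots+g_{2j-1} \;\le\; g_2+g_4+\dots+g_{2j} \qquad (1\le j\le m) \]
on the staircase step lengths (with equality at $j=m$ by palindromicity), and equivalently to the bound $V_i(K)\le \lfloor (g-i+1)/2\rfloor$ on the torsion coefficients $V_i(K)=t_i(K)$ of the L-space knot $K$ (which already satisfy $V_i\ge V_{i+1}\ge V_i-1$ and $V_g=0$). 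So the plan is to prove this front-loading estimate from the chain-complex structure of the staircase -- the bare fact that it is a monotone symmetric lattice path is not enough -- by tracking how the vertical and horizontal differentials of $CFK^\infty(K)$ must interleave in order for the total homology to be a single tower carrying the Maslov gradings dictated by $\hfkhat(K)$. I expect this last step to be the main obstacle; it is exactly the content of \cite[Theorem~1.16]{krcatovich}, so in practice I would simply cite it, and the only part genuinely argued here would be the elementary reformulation in the previous paragraph.
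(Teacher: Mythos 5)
The paper offers no proof of this statement---it is quoted verbatim from Krcatovich with a citation---so your plan of ultimately invoking \cite[Theorem~1.16]{krcatovich} for the substantive inequality $a_i \le g-2i$ matches the paper exactly. The extra material you supply is correct and a bit more self-contained than the paper: the formal division by $1-t^{-1}$, the parity count (using $\sum_k(-1)^k n_k=0$) showing $a_0=g$ and $a_i=-i$ for $i\ge g$, and the translation of $a_i\le g-2i$ into the cumulative step-length inequality $g_1+g_3+\dots+g_{2j-1}\le g_2+g_4+\dots+g_{2j}$ all check out.
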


It will be convenient for us to rewrite \eqref{eq:krcatovich-lspace} slightly, using 
\[ (1-t^{-1})\sum_{i=g}^\infty t^{a_i} = (1-t^{-1})\sum_{i=g}^\infty t^{-i} = t^{-g} \]
to say that
\begin{equation} \label{eq:krcatovich-finite}
\Delta_K(t) = (1-t^{-1}) \left(\sum_{i=0}^{g-1} t^{a_i}\right) + t^{-g}.
\end{equation}

\begin{lemma} \label{lem:other-alexander-restrictions}
Under the hypotheses of Theorem~\ref{thm:krcatovich}, we have $a_{g-1} = 2-g$, and if $g \geq 2$ then the remaining exponents
\[ a_1 > a_2 > \dots > a_{g-2} \]
contain exactly one element from each pair $\{g-2,3-g\},\ \{g-3,4-g\},\ \{g-4,5-g\},\ \dots,\ \{1,0\}$.
\end{lemma}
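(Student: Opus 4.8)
The plan is to convert the symmetry $\Delta_K(t)=\Delta_K(t^{-1})$ into a clean finite identity among the exponents $a_i$, and then argue combinatorially. Write $P(t)=\sum_{i=0}^{g-1}t^{a_i}$, so that \eqref{eq:krcatovich-finite} reads $\Delta_K(t)=(1-t^{-1})P(t)+t^{-g}$. Since $1-t=-t(1-t^{-1})$, substituting $t^{-1}$ for $t$ in this formula and invoking the symmetry of $\Delta_K$ gives
\[ (1-t^{-1})P(t)+t^{-g}=\Delta_K(t^{-1})=(1-t)P(t^{-1})+t^g=-t(1-t^{-1})P(t^{-1})+t^g, \]
and after rearranging this becomes $(1-t^{-1})\bigl(P(t)+tP(t^{-1})\bigr)=t^g-t^{-g}$. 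A short telescoping computation shows $t^g-t^{-g}=(1-t^{-1})\sum_{k=1-g}^{g}t^k$, so cancelling the nonzerodivisor $1-t^{-1}$ in $\Z[t,t^{-1}]$ leaves the key identity
\[ \sum_{i=0}^{g-1}\bigl(t^{a_i}+t^{1-a_i}\bigr)=P(t)+tP(t^{-1})=\sum_{k=1-g}^{g}t^k. \]

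From here the argument is bookkeeping. The right-hand side is a sum of $2g$ distinct monomials with coefficient $1$, and the left-hand side is visibly a sum of $2g$ monomials each with coefficient $1$; so no exponent can occur with multiplicity on the left, which forces $a_i\neq 1-a_j$ for all $i\neq j$. Consequently the $2g$ integers $a_0,\,1-a_0,\,\dots,\,a_{g-1},\,1-a_{g-1}$ are distinct and together exhaust $\{1-g,2-g,\dots,g\}$. Now partition this set into the $g$ ``complementary pairs'' $\{g,1-g\},\{g-1,2-g\},\dots,\{1,0\}$ of integers summing to $1$; each $a_i$ lies in the pair $\{a_i,1-a_i\}$, and the absence of coincidences $a_i=1-a_j$ shows that $a_0,\dots,a_{g-1}$ occupy pairwise distinct pairs, hence pick out exactly one element from each of the $g$ pairs.

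It then remains to identify the two outermost representatives. We already have $a_0=g$, which occupies the pair $\{g,1-g\}$. For $a_{g-1}$, strict monotonicity together with $a_g=-g$ gives $a_{g-1}\geq 1-g$, and $a_{g-1}\neq 1-g$ because the pair $\{g,1-g\}$ is already used by $a_0$; meanwhile the inequality $a_i\leq g-2i$ of Theorem~\ref{thm:krcatovich}, applied at $i=g-1$, gives $a_{g-1}\leq 2-g$. Hence $a_{g-1}=2-g$, occupying the pair $\{g-1,2-g\}$, and so the remaining exponents $a_1>a_2>\dots>a_{g-2}$ must contain exactly one element from each of the remaining pairs $\{g-2,3-g\},\{g-3,4-g\},\dots,\{1,0\}$, as claimed.

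I expect the only real obstacle to be spotting the finite identity $P(t)+tP(t^{-1})=\sum_{k=1-g}^{g}t^k$: once one thinks to exploit the symmetry of $\Delta_K$ against the factor $1-t^{-1}$ already present in \eqref{eq:krcatovich-finite}, the rest is elementary. It is worth noting that the upper bound $a_{g-1}\leq 2-g$ genuinely uses Krcatovich's inequality $a_i\leq g-2i$; the pairing argument by itself only constrains $a_{g-1}$ to the range $\{2-g,\dots,g-1\}$.
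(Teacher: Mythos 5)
Your proof is correct and follows essentially the same route as the paper: both exploit the symmetry $\Delta_K(t)=\Delta_K(t^{-1})$ to show that $\{a_0,\dots,a_{g-1}\}$ and $\{1-a_0,\dots,1-a_{g-1}\}$ partition $\{1-g,\dots,g\}$, and both pin down $a_{g-1}=2-g$ by combining the resulting constraint $a_{g-1}\neq 1-g$ with $a_{g-1}>a_g=-g$ and Krcatovich's inequality $a_{g-1}\leq 2-g$. Your intermediate identity $P(t)+tP(t^{-1})=\sum_{k=1-g}^{g}t^k$ is just a cleaner packaging of the same comparison the paper carries out directly on the two expansions of $\Delta_K$.
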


\begin{proof}
We use \eqref{eq:krcatovich-finite} to write
\begin{align*}
\Delta_K(t^{-1}) &= t^g + (1-t)\sum_{i=0}^{g-1} t^{-a_i} \\
&= \left( (1-t^{-1})(t^g + t^{g-1} + \dots + t^{1-g}) + t^{-g} \right) - (1-t^{-1})\sum_{i=0}^{g-1} t^{1-a_i} \\
&= (1-t^{-1})\left(\sum_{\substack{1-g \leq b \leq g \\ b \neq 1-a_i\ \forall i}} t^b\right) + t^{-g}.
\end{align*}
Since $\Delta_K(t) = \Delta_K(t^{-1})$, we conclude that the $g$-element sets
\[ \{a_0,\dots,a_{g-1}\} \quad\text{and}\quad \{1-a_0,\dots,1-a_{g-1}\} \]
are disjoint and thus partition the $2g$-element set $\{g,g-1,\dots,1-g\}$.  In other words, no two elements of $\{a_0,\dots,a_{1-g}\}$ can sum to $1$.

This last condition implies that $a_{g-1} \neq 1-a_0 = 1-g$, and we have $a_{g-1} > a_g = -g$, so in fact $a_{g-1} \geq 2-g$.  At the same time, Theorem~\ref{thm:krcatovich} says that $a_{g-1} \leq g-2(g-1) = 2-g$, so equality must hold.  Then the remaining $g-2$ exponents $a_1 > a_2 > \dots > a_{g-2}$ contain at most one element from each pair $\{g-2,3-g\}, \{g-3,4-g\}, \dots, \{1,0\}$, since otherwise two of them would sum to $1$, but there are exactly $g-2$ such pairs so each must contain one of the $a_i$ as claimed.
\end{proof}

We will eventually be interested in the Casson--Walker invariants \cite{walker} of Dehn surgeries on L-space knots, so the following lemma will allow us to compute them in terms of the coefficients $a_i$.

\begin{lemma} \label{lem:lspace-casson}
If $K \subset S^3$ is an L-space knot of genus $g \geq 1$, and we write $\Delta_K(t)$ in the form \eqref{eq:krcatovich-lspace}, then
\[ \frac{\Delta''_K(1)}{2} = \sum_{i=0}^{g-1} a_i + \frac{g(g-1)}{2}. \]
\end{lemma}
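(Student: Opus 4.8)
The plan is to compute $\Delta_K''(1)$ directly from the finite expression \eqref{eq:krcatovich-finite}, namely
\[ \Delta_K(t) = (1-t^{-1})P(t) + t^{-g}, \qquad P(t) = \sum_{i=0}^{g-1} t^{a_i}, \]
and then divide by $2$. Since we only need the value of a second derivative at the single point $t=1$, there is no need to track a full Taylor expansion: the Leibniz rule does everything, and it is in fact convenient that the factor $u(t) = 1-t^{-1}$ vanishes at $t=1$.

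First I would record the derivatives of $u(t) = 1 - t^{-1}$ at $t=1$, namely $u(1) = 0$, $u'(1) = 1$, and $u''(1) = -2$. Applying the product rule $(uP)'' = u''P + 2u'P' + uP''$ and evaluating at $t=1$, the term $u(1)P''(1)$ drops out, leaving $(uP)''(1) = -2P(1) + 2P'(1)$. Next I would evaluate $P(1) = \sum_{i=0}^{g-1} 1 = g$ and, from $P'(t) = \sum_{i=0}^{g-1} a_i t^{a_i-1}$, the value $P'(1) = \sum_{i=0}^{g-1} a_i$. Finally, differentiating $t^{-g}$ twice gives $g(g+1)t^{-g-2}$, which equals $g(g+1)$ at $t=1$.

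Adding the two contributions then yields
\[ \Delta_K''(1) = -2g + 2\sum_{i=0}^{g-1} a_i + g(g+1) = 2\sum_{i=0}^{g-1} a_i + g(g-1), \]
and dividing by $2$ gives the claimed identity. The computation is entirely routine, so there is no real obstacle; the only points deserving a little care are using the finite form \eqref{eq:krcatovich-finite} rather than the infinite sum of \eqref{eq:krcatovich-lspace} so that all derivatives are taken termwise, and observing that, because $u(1)=0$, only the $u''P$ and $u'P'$ terms of the Leibniz expansion survive at $t=1$. As a consistency check one can compare with \eqref{eq:torus-casson}: for $T_{2,5}$ one has $(a_0,a_1) = (2,0)$, so the formula gives $\Delta_K''(1)/2 = 2 + 1 = 3 = (4-1)(25-1)/24$, as expected.
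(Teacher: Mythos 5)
Your computation is correct and follows essentially the same route as the paper: both differentiate the finite expression \eqref{eq:krcatovich-finite} twice and evaluate at $t=1$, where the factor $(1-t^{-1})$ kills the $P''$ term. Organizing the calculation via the Leibniz rule at $t=1$ rather than writing out the full second derivative is only a cosmetic difference, and your consistency check against \eqref{eq:torus-casson} is a nice touch.
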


\begin{proof}
Differentiating \eqref{eq:krcatovich-finite} twice yields
\begin{multline*}
\Delta''_K(t) = (1-t^{-1})\left(\sum_{i=0}^{g-1} a_i(a_i-1)t^{a_i-2}\right) \\
+ 2t^{-2}\left(\sum_{i=0}^{g-1} a_it^{a_i-1}\right) - 2t^{-3}\left(\sum_{i=0}^{g-1} t^{a_i}\right) + g(g+1)t^{-g-2},
\end{multline*}
and then we set $t=1$ and divide by $2$ to get the desired expression.
\end{proof}

We can now prove that the Alexander polynomial of $T_{2,2g+1}$ is uniquely identified among all Alexander polynomials of genus-$g$ L-space knots by its second derivative at $t=1$.

\begin{proposition} \label{prop:casson-t2n}
Let $K \subset S^3$ be an L-space knot of genus $g \geq 1$.  Then its Alexander polynomial satisfies
\[ \frac{\Delta''_K(1)}{2} \leq \frac{g(g+1)}{2}, \]
with equality if and only if $\Delta_K(t) = \Delta_{T_{2,2g+1}}(t)$.
\end{proposition}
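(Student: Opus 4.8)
The plan is to reduce everything to the coefficient inequality $a_i \le g-2i$ from Theorem~\ref{thm:krcatovich}, using Lemma~\ref{lem:lspace-casson} to express $\Delta_K''(1)/2$ in terms of the exponents $a_i$. Concretely, writing $\Delta_K(t)$ in the form \eqref{eq:krcatovich-lspace}, Lemma~\ref{lem:lspace-casson} gives
\[
\frac{\Delta_K''(1)}{2} \;=\; \sum_{i=0}^{g-1} a_i \;+\; \frac{g(g-1)}{2},
\]
so the proposition is equivalent to the statement that $\sum_{i=0}^{g-1} a_i \le g$, with equality if and only if $\Delta_K(t) = \Delta_{T_{2,2g+1}}(t)$.

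First I would prove the inequality. Theorem~\ref{thm:krcatovich} supplies $a_i \le g-2i$ for every $0 \le i \le g$, so summing over $0 \le i \le g-1$ yields
\[
\sum_{i=0}^{g-1} a_i \;\le\; \sum_{i=0}^{g-1}(g-2i) \;=\; g^2 - g(g-1) \;=\; g,
\]
and substituting this back into the displayed formula gives $\Delta_K''(1)/2 \le g + \frac{g(g-1)}{2} = \frac{g(g+1)}{2}$.

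Next I would analyze the equality case. If equality holds, then $a_i = g-2i$ for all $0 \le i \le g-1$; since $a_i = -i$ for $i \ge g$ is already built into \eqref{eq:krcatovich-lspace}, the expression \eqref{eq:krcatovich-finite} then determines $\Delta_K$ completely. Expanding $(1-t^{-1})\bigl(\sum_{i=0}^{g-1} t^{g-2i}\bigr) + t^{-g}$ telescopes to the alternating sum $\sum_{j=0}^{2g}(-1)^j t^{g-j}$, which is precisely $\Delta_{T_{2,2g+1}}(t)$. Conversely, $T_{2,2g+1}$ is an L-space knot of genus $g$ whose exponents are exactly $a_i = g-2i$, so by Lemma~\ref{lem:lspace-casson} it attains $\Delta''_{T_{2,2g+1}}(1)/2 = \frac{g(g+1)}{2}$; hence the stated equivalence holds. (When $g=1$ the sequence $(a_i)$ is already forced by \eqref{eq:krcatovich-lspace} to be that of $T_{2,3}$, so the equality clause is automatic there.)

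I do not anticipate a serious obstacle: once Lemma~\ref{lem:lspace-casson} converts $\Delta_K''(1)/2$ into $\sum_{i=0}^{g-1} a_i + \frac{g(g-1)}{2}$, the inequality is immediate from Krcatovich's bound. The only points requiring a little care are purely routine bookkeeping — checking that the extremal sequence $a_i = g-2i$ is strictly decreasing and compatible with the $i \ge g$ tail, and carrying out the telescoping identification of the resulting polynomial with $\Delta_{T_{2,2g+1}}(t)$.
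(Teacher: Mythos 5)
Your proof is correct and follows essentially the same route as the paper: both convert $\Delta_K''(1)/2$ into $\sum_{i=0}^{g-1} a_i + \frac{g(g-1)}{2}$ via Lemma~\ref{lem:lspace-casson}, apply Krcatovich's bound $a_i \leq g-2i$ termwise, and identify the extremal sequence $a_i = g-2i$ with $\Delta_{T_{2,2g+1}}(t)$ by expanding \eqref{eq:krcatovich-finite}. The only difference is that you spell out the converse direction and the $g=1$ case slightly more explicitly, which the paper leaves implicit.
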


\begin{proof}
We combine Lemma~\ref{lem:lspace-casson} with Krcatovich's inequality $a_i \leq g - 2i$ for $0 \leq i \leq g$ from Theorem~\ref{thm:krcatovich} to write
\begin{align*}
\frac{\Delta''_K(1)}{2} &\leq \sum_{i=0}^{g-1} (g-2i) + \frac{g(g-1)}{2} \\
&= \left(g^2 - g(g-1)\right) + \frac{g(g-1)}{2} = \frac{g(g+1)}{2},
\end{align*}
where equality holds if and only if $a_i = g-2i$ for each $i=0,1,\dots,g-1$.  But this is the case precisely when
\begin{align*}
\Delta_K(t) &= (1-t^{-1})(t^g + t^{g-2} + t^{g-4} + \dots + t^{2-g}) + t^{-g} \\
&= t^g - t^{g-1} + t^{g-2} - t^{g-3} + t^{g-4} - \dots + t^{2-g} - t^{1-g} + t^{-g} \\
&= \Delta_{T_{2,2g+1}}(t),
\end{align*}
as claimed.
\end{proof}

\begin{remark}
There are other cases where knowing the value of $\frac{1}{2}\Delta''_K(1)$ for some L-space knot $K$ is enough to recover $\Delta_K(t)$, even though it does not work in general.  For example, using Lemmas~\ref{lem:other-alexander-restrictions} and \ref{lem:lspace-casson}, one can directly check that if $J$ and $K$ are L-space knots of the same genus $g \leq 5$ satisfying $\frac{1}{2}\Delta''_J(1) = \frac{1}{2}\Delta''_K(1)$, then $\Delta_J(t) = \Delta_K(t)$, since each possible choice of $a_1>\dots>a_{g-2}$ leads to a different value of $\sum_i a_i$.
\end{remark}

In the next subsection, we will classify the non-hyperbolic L-space knots whose Alexander polynomials achieve equality in Proposition~\ref{prop:casson-t2n}.  Such knots must be either torus knots or satellite knots \cite[Corollary~2.5]{thurston-kleinian}, so it will also be helpful to compute $\frac{1}{2}\Delta''_K(1)$ for satellite knots $K$ in terms of the pattern and companion knots.

\begin{lemma} \label{lem:casson-satellite}
Suppose that $K \subset S^3$ is a nontrivial satellite knot, and $K = P(C)$ where $P \subset S^1\times D^2$ is the pattern and $C \subset S^3$ the companion.  Then
\[ \Delta''_K(1) = \Delta''_{P(U)}(1) + w^2\Delta''_C(1). \]
\end{lemma}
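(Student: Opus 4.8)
The plan is to deduce this purely formally from the classical satellite formula for Alexander polynomials together with the symmetric normalization, and then differentiate. Recall that if $K = P(C)$ has winding number $w$, then
\[ \Delta_K(t) = \Delta_{P(U)}(t) \cdot \Delta_C(t^w), \]
where $\Delta_{P(U)}$ is the Alexander polynomial of the satellite $P(U) \subset S^3$ with unknotted companion (equivalently, of $P$ viewed as a knot in $S^3$ via the standard embedding $S^1 \times D^2 \hookrightarrow S^3$), and all Alexander polynomials are taken in the symmetric normalization, so that $\Delta_J(t) = \Delta_J(t^{-1})$, and in particular $\Delta_J(1) = 1$ and $\Delta_J'(1) = 0$, for every knot $J \subset S^3$. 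This is the same formula already used implicitly for $(\pm 1,q)$-cables in the proof of Lemma~\ref{lem:hfk-outer-monodromy}.

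First I would set $f(t) = \Delta_{P(U)}(t)$ and $h(t) = \Delta_C(t^w)$, so that $\Delta_K = f h$ and hence
\[ \Delta_K''(t) = f''(t)\,h(t) + 2 f'(t)\,h'(t) + f(t)\,h''(t). \]
Next I would evaluate each factor at $t=1$. The symmetric normalization gives $f(1) = 1$ and $f'(1) = 0$. For $h$, the chain rule gives $h'(t) = w\,t^{w-1}\Delta_C'(t^w)$ and
\[ h''(t) = w(w-1)\,t^{w-2}\Delta_C'(t^w) + w^2\,t^{2w-2}\Delta_C''(t^w), \]
so, using $\Delta_C(1) = 1$ and $\Delta_C'(1) = 0$, we obtain $h(1) = 1$, $h'(1) = 0$, and $h''(1) = w^2\Delta_C''(1)$. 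Substituting these values into the product-rule expression above yields
\[ \Delta_K''(1) = f''(1) + 0 + w^2\Delta_C''(1) = \Delta_{P(U)}''(1) + w^2\Delta_C''(1), \]
which is exactly the claim.

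The only points requiring care are bookkeeping ones: invoking the satellite formula with the convention that $\Delta_{P(U)}$ denotes the pattern polynomial with trivial companion, so that it matches the $P(U)$ in the statement; noting that the result is insensitive to the sign of $w$ since only $w^2$ appears (and the degenerate case $w = 0$ is immediate, as then $\Delta_C(t^w) \equiv 1$ and $K = P(U)$); and using the symmetry $\Delta_J(t) = \Delta_J(t^{-1})$ to justify $\Delta_J'(1) = 0$ for $J \in \{P(U), C\}$, which is what makes the cross term $2f'(1)h'(1)$ and the first term in $h''(1)$ vanish. I do not expect any substantive obstacle here; the content of the lemma is entirely in the classical satellite formula and the vanishing of the first derivative of a symmetrically normalized Alexander polynomial at $1$.
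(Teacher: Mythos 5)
Your proof is correct and follows exactly the same route as the paper: differentiate the satellite formula $\Delta_K(t) = \Delta_{P(U)}(t)\cdot\Delta_C(t^w)$ twice and evaluate at $t=1$ using $\Delta_J(1)=1$ and $\Delta_J'(1)=0$ for symmetrically normalized Alexander polynomials. No issues.
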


\begin{proof}
Let $w \geq 0$ be the winding number of $K$.  We differentiate the relation
\[ \Delta_K(t) = \Delta_{P(U)}(t) \cdot \Delta_C(t^w) \]
twice to get
\begin{multline*}
\Delta''_K(t) = \Delta''_{P(U)}(t)\Delta_C(t^w) + 2\Delta'_{P(U)}(t) \cdot wt^{w-1}\Delta'_C(t^w) \\
+ \Delta_{P(U)}(t)\left( w(w-1)t^{w-2}\Delta'_C(t^w) + w^2t^{2w-2}\Delta''_C(t^w) \right).
\end{multline*}
The desired relation follows from setting $t=1$ and noting that $\Delta_{P(U)}(1) = \Delta_C(1) = 1$ while $\Delta'_{P(U)}(1) = \Delta'_C(1) = 0$.
\end{proof}

\subsection{Knots with the same knot Floer homology as $T_{2,2g+1}$}

We know that $\hfkhat(K)$ detects the trefoil $T_{2,3}$ \cite{ghiggini} and the cinquefoil $T_{2,5}$ \cite{frw-cinquefoil}, but not whether it detects other knots $T_{2,2g+1}$ with $g \geq 3$.  However, we can combine Proposition~\ref{prop:casson-t2n} with work of Baker--Motegi \cite{baker-motegi-twist} on L-space satellites to show that any other knot whose knot Floer homology agrees with that of $T_{2,2g+1}$ must be hyperbolic.

\begin{proposition} \label{prop:satellite-t2n}
Let $K$ be an L-space knot of genus $g \geq 1$, and suppose that $\Delta_K(t) = \Delta_{T_{2,2g+1}}(t)$.  Then $K$ is not a satellite knot.
\end{proposition}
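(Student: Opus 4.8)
The plan is to argue by contradiction. Suppose $K = P(C)$ is a nontrivial satellite L-space knot of genus $g \geq 1$ with pattern $P \subset S^1 \times D^2$, nontrivial companion $C$, winding number $w$, and $\Delta_K(t) = \Delta_{T_{2,2g+1}}(t)$.

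First I would use the work of Baker--Motegi \cite{baker-motegi-twist} on L-space satellites, together with Hom \cite{hom-cabling} and Hom--Lidman--Vafaee \cite{hom-lidman-vafaee}, to constrain the structure of $K$. These results give that the companion $C$ is itself an L-space knot, so $g(C) \geq 1$; that the winding number satisfies $w \geq 2$ (the cases $w = 0,1$ being excluded); that the pattern $P$ is a cable pattern or a $1$-bridge braid, so that $P(U) \subset S^3$ is an L-space knot (a torus knot, respectively a Berge knot with a lens space surgery); and that, since $K$, $C$ and $P(U)$ are all fibered, the Seifert genus is additive: $g = g(P(U)) + w\cdot g(C)$. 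I would also record that $g(P(U)) \geq w - 1$. In the cable case, where $K$ is the $(m,w)$-cable of $C$, Hom's cabling criterion \cite{hom-cabling} forces $m \geq w(2g(C) - 1) \geq w$, hence $g(P(U)) = g(T_{m,w}) = \tfrac{(m-1)(w-1)}{2} \geq \tfrac{w(w-1)}{2} \geq w - 1$; in the $1$-bridge braid case the same bound follows from a short computation with the Berge--Gabai parameters of \cite{hom-lidman-vafaee}, since the L-space constraint there forces the twist parameter to be large. (The degenerate possibility $g(P(U)) = 0$ does not occur: it would make $K$ a $(\pm 1, w)$-cable, contradicting Lemma~\ref{lem:hfk-outer-monodromy}.)

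With this structure in hand I would run an Alexander polynomial comparison via the Casson invariant. Write $g_P = g(P(U))$ and $h = g(C)$, so $g = g_P + wh$ with $g_P \geq w - 1 \geq 1$, $h \geq 1$, and $w \geq 2$. Lemma~\ref{lem:casson-satellite} gives $\Delta_K''(1) = \Delta_{P(U)}''(1) + w^2 \Delta_C''(1)$, and applying Proposition~\ref{prop:casson-t2n} to the L-space knots $P(U)$ and $C$ yields
\[ \frac{\Delta_K''(1)}{2} \;=\; \frac{\Delta_{P(U)}''(1)}{2} + w^2 \cdot \frac{\Delta_C''(1)}{2} \;\leq\; \frac{g_P(g_P+1)}{2} + \frac{w^2 h(h+1)}{2}. \]
A direct expansion shows
\[ \frac{g(g+1)}{2} - \left( \frac{g_P(g_P+1)}{2} + \frac{w^2 h(h+1)}{2} \right) \;=\; \frac{wh\,(2 g_P + 1 - w)}{2}, \]
and since $g_P \geq w - 1$ we have $2 g_P + 1 - w \geq w - 1 \geq 1$, so the right-hand side is at least $\tfrac12 wh \geq 1$. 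Therefore $\tfrac12 \Delta_K''(1) < \tfrac12 g(g+1)$. But Proposition~\ref{prop:casson-t2n} asserts that $\Delta_K(t) = \Delta_{T_{2,2g+1}}(t)$ forces $\tfrac12 \Delta_K''(1) = \tfrac12 g(g+1)$, a contradiction; hence $K$ cannot be a satellite.

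The main obstacle is the first paragraph: correctly extracting the needed structural facts about L-space satellite knots from \cite{baker-motegi-twist}, \cite{hom-lidman-vafaee} and \cite{hom-cabling} --- especially that $C$ and $P(U)$ are L-space knots, that $w \geq 2$, and the genus inequality $g(P(U)) \geq w - 1$. Once these are established the remainder is the elementary computation above. As an alternative to invoking Proposition~\ref{prop:casson-t2n} for $P(U)$, one could instead split into the cable and $1$-bridge braid cases and bound $\tfrac12\Delta_{P(U)}''(1)$ directly --- for instance via the closed formula \eqref{eq:torus-casson} in the cable case --- but using Proposition~\ref{prop:casson-t2n} keeps the argument uniform.
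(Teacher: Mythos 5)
Your Casson--Walker/Alexander-polynomial computation is essentially the same as the first step of the paper's proof: writing $g = g_P + wh$ and combining Lemma~\ref{lem:casson-satellite} with Proposition~\ref{prop:casson-t2n} does yield
\[
\frac{g(g+1)}{2} - \Bigl(\frac{g_P(g_P+1)}{2} + \frac{w^2h(h+1)}{2}\Bigr) = \frac{wh\,(2g_P+1-w)}{2},
\]
and your algebra here is correct. The problem is the structural input you feed into it. You assert that the cited results of Baker--Motegi, Hom, and Hom--Lidman--Vafaee show that the pattern $P$ is a cable or a $1$-bridge braid. They do not. Baker--Motegi show only that the pattern of a satellite L-space knot is \emph{braided}, which is far weaker; Hom's cabling criterion and the Hom--Lidman--Vafaee classification apply only \emph{once one already knows} the pattern is a cable, respectively a Berge--Gabai knot. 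The statement that every L-space satellite pattern is a $1$-bridge braid is precisely the Hom--Lidman--Vafaee conjecture and is not a theorem you can invoke. (In Proposition~\ref{prop:compare-satellite-surgeries} the paper does reduce to $0$- and $1$-bridge braids, but only because there the companion torus is assumed to compress in the surgered manifold, so Gabai's solid-torus theorem applies; no such hypothesis is available here.) Since your lower bound $g(P(U)) \geq w-1$ is only verified in the cable and $1$-bridge braid cases, the key inequality $2g_P + 1 - w \geq 1$ is unsupported for a general braided pattern, and indeed a braided pattern can close up in $S^3$ to a knot of small genus relative to its winding number. The same gap infects your dismissal of $g(P(U))=0$, which again presumes $P$ is a cable.

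Note that the inequality you want runs in the \emph{opposite} direction to what the hypothesis actually forces: the paper deduces from $\Delta_K''(1) = g(g+1)$ that $w \geq 2g_P+1$, i.e.\ $g_P \leq (w-1)/2$, and then needs two further inputs to reach a contradiction --- an analysis of the coefficients of $\Delta_{P(U)}(t)\cdot\Delta_C(t^w)$ showing that a vanishing $t^{wk-h-1}$-coefficient would contradict $\Delta_K = \Delta_{T_{2,2g+1}}$ unless $w = 2g_P+1$ exactly, and then Baker--Motegi's general inequality $w(2g(C)-1) < \frac{2g_P-1+w}{w-1}$ for braided L-space patterns, which fails when $w=2g_P+1$. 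If you want to keep your shorter route, you must either prove $g(P(U)) \geq w/2$ for arbitrary braided L-space satellite patterns or restrict to the cases where the $1$-bridge braid structure is actually known; otherwise you should fall back on the coefficient analysis and \cite[Theorem~7.3]{baker-motegi-twist} as the paper does.
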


\begin{proof}
Suppose that $K$ is a satellite knot, say $K=P(C)$ where $P \subset S^1\times D^2$ is the pattern and $C \subset S^3$ the companion.  Since $K$ is an L-space knot, the pattern $P$ is braided by \cite[Theorem~1.17]{baker-motegi-twist}, so since $P$ is a nontrivial satellite pattern it must have winding number $w \geq 2$.

Since $K$ is a satellite L-space knot, we appeal to \cite[Theorem~1.15]{hrw} to see that $P(U)$ and $C$ are both L-space knots as well.  Writing $h=g(P(U))$ and $k = g(C) \geq 1$, so that $g = h+wk$, we apply Proposition~\ref{prop:casson-t2n} and Lemma~\ref{lem:casson-satellite} to get
\begin{align*}
g(g+1) = \Delta''_K(1) &= \Delta''_{P(U)}(1) + w^2 \Delta''_C(1) \\
&\leq h(h+1) + w^2k(k+1) \\
&= (h^2 + (wk)^2) + (h+w^2k) \\
&= (h+wk)^2 + (h+wk) + (w^2k - 2hwk - wk) \\
&= g^2+g + wk(w-(2h+1))
\end{align*}
which can only be true if $w \geq 2h+1$.

If we had a strict inequality $w > 2h+1$, then the first few terms of $\Delta_K(t)$ would have the form
\begin{align*}
\Delta_K(t) = \Delta_{P(U)}(t) \cdot \Delta_C(t^w) &= (t^h - \dots + t^{-h}) (t^{wk} + O(t^{w(k-1)})) \\
&= (t^h - \dots + t^{-h})t^{wk} + O(t^{wk-w+h}) \\
&= t^{wk+h} - \dots + t^{wk-h} + O(t^{wk-w+h}).
\end{align*}
But $w > 2h+1$ is equivalent to $wk-w+h < wk-h-1$, so then the $t^{wk-h-1}$-coefficient of $\Delta_K(t)$ is zero.  Since $\Delta_K(t)$ has nonzero terms of strictly lower degree -- the lowest-degree term is $t^{-g} = t^{-wk-h}$ -- this contradicts our assumption that $\Delta_K(t)$ is equal to $\Delta_{T_{2,2g+1}}(t)$, all of whose $t^i$-coefficients for $-g \leq i \leq g$ are nonzero.  We must therefore have $w=2h+1$.  (Note that since $w \geq 2$, this means in particular that $h \geq 1$.)

Finally, Baker--Motegi \cite[Theorem~7.3]{baker-motegi-twist} proved that since $K$ is a satellite L-space knot, it satisfies
\[ w(2k-1) < \frac{2h-1 + w}{w-1}. \]
Using $w=2h+1$, this inequality simplifies to
\[ (2h+1)(2k-1) < \frac{4h}{2h} = 2. \]
Since the left side is at least $3\cdot 1 = 3$, we have a contradiction.
\end{proof}

\subsection{Traces that detect knot Floer homology}

We now combine the results of the previous sections to prove that the $n$-trace of any $T_{2,2g+1}$ recovers the knot Floer homology of any knot with the same $n$-trace.

\begin{proposition} \label{prop:lspace-trace-hyperbolic}
Fix $g \geq 1$, and suppose for some knot $K \subset S^3$ and integer $n \in \Z$ that $X_n(K) \cong X_n(T_{2,2g+1})$.  Then
\[ \hfkhat(K) \cong \hfkhat(T_{2,2g+1}) \]
as bigraded $\F$-vector spaces, and if $K$ is not isotopic to $T_{2,2g+1}$ then it is hyperbolic.
\end{proposition}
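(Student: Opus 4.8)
The plan is to derive everything from results already established: first that $K$ is an L-space knot with the same genus and Alexander polynomial as $T_{2,2g+1}$, then that this pins down $\hfkhat(K)$, and finally that the geometric classification of knots together with Propositions~\ref{prop:compare-torus-surgeries} and~\ref{prop:satellite-t2n} leaves hyperbolicity as the only remaining possibility.

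First I would note that $T_{2,2g+1}$ is an L-space knot, so Theorem~\ref{thm:lspace-detection-main} applied to $X_n(K)\cong X_n(T_{2,2g+1})$ shows that $K$ is an L-space knot with $g(K)=g$; comparing boundaries also yields an orientation-preserving diffeomorphism $S^3_n(K)\cong S^3_n(T_{2,2g+1})$. Next I would prove that $\Delta_K(t)=\Delta_{T_{2,2g+1}}(t)$, splitting on the sign of $n$. If $n\neq 0$ then $S^3_n(K)$ is a rational homology sphere, so I can feed the diffeomorphism into the Casson--Walker surgery formula $\lambda(S^3_n(J))=\lambda(S^3_n(U))+\frac{1}{n}\cdot\frac{\Delta''_J(1)}{2}$ used in the proof of Proposition~\ref{prop:compare-torus-surgeries} to conclude that $\Delta''_K(1)=\Delta''_{T_{2,2g+1}}(1)$; since $\frac{\Delta''_{T_{2,2g+1}}(1)}{2}=\frac{g(g+1)}{2}$ by \eqref{eq:torus-casson}, the equality case of Proposition~\ref{prop:casson-t2n} forces $\Delta_K(t)=\Delta_{T_{2,2g+1}}(t)$. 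If instead $n=0$, then $S^3_0(K)\cong S^3_0(T_{2,2g+1})$ gives this same equality directly, since the Alexander polynomial of $0$-surgery on a knot equals that of the knot. Since the bigraded knot Floer homology of an L-space knot is determined by its Alexander polynomial \cite[Theorem~1.2]{osz-lens}, and both $K$ and $T_{2,2g+1}$ are L-space knots, this yields $\hfkhat(K)\cong\hfkhat(T_{2,2g+1})$ as bigraded $\F$-vector spaces.

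To finish, I would suppose $K$ is not isotopic to $T_{2,2g+1}$ and invoke geometrization for Haken manifolds \cite[Corollary~2.5]{thurston-kleinian}, so that $K$ is a torus knot, a satellite knot, or a hyperbolic knot. A torus knot $K\neq T_{2,2g+1}$ is impossible, since the diffeomorphism $S^3_n(K)\cong S^3_n(T_{2,2g+1})$ together with $g(K)=g(T_{2,2g+1})$ would contradict Proposition~\ref{prop:compare-torus-surgeries}. A satellite knot is impossible by Proposition~\ref{prop:satellite-t2n}, since $K$ is an L-space knot of genus $g$ with $\Delta_K(t)=\Delta_{T_{2,2g+1}}(t)$. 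Hence $K$ must be hyperbolic, completing the proof.

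I do not expect a serious obstacle here: all of the substantive content lives in the cited results, and the argument is essentially one of assembly. The only points needing care are the case split at $n=0$ (the Casson--Walker invariant is available only for the rational homology spheres $S^3_n(K)$ with $n\neq 0$, so the $0$-surgery case must instead be handled via the Milnor torsion of $S^3_0(K)$) and tracking that the diffeomorphisms in play are orientation-preserving, so that the Casson--Walker comparison is a genuine equality rather than a sign-flipped relation.
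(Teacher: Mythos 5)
Your proof is correct and follows essentially the same route as the paper's: Theorem~\ref{thm:lspace-detection-main}, the Casson--Walker surgery formula, the equality case of Proposition~\ref{prop:casson-t2n}, and Proposition~\ref{prop:satellite-t2n}. The only cosmetic differences are that the paper dispatches all $n\leq 0$ at once via Theorem~\ref{thm:negtracetorus} rather than treating $n=0$ separately, and it rules out the torus-knot case directly from $\hfkhat(K)\cong\hfkhat(T_{2,2g+1})$ rather than via Proposition~\ref{prop:compare-torus-surgeries}; both variants are valid.
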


\begin{proof}
If $n\leq 0$ then $K=T_{2,2g+1}$ by Theorem~\ref{thm:negtracetorus}, so from now on we assume that $n \geq 1$.

Since $T_{2,2g+1}$ is an L-space knot, Theorem~\ref{thm:lspace-detection-main} says that $K$ must be an L-space knot of the same genus as $T_{2,2g+1}$, namely $g$.  Since the knot Floer homology of an L-space knot is completely determined by its Alexander polynomial \cite{osz-lens}, we will show that $\Delta_K(t) = \Delta_{T_{2,2g+1}}(t)$.

The identification of the $n$-traces restricts to a homeomorphism $S^3_n(K) \cong S^3_n(T_{2,2g+1})$ between their boundaries, which are rational homology spheres.  The surgery formula for the Casson--Walker invariant \cite{walker} says that
\[ \lambda(S^3_n(K)) = \lambda(S^3_n(U)) + \frac{1}{n}\frac{\Delta''_K(1)}{2} \]
and likewise for $\lambda(S^3_n(T_{2,2g+1}))$, so since these two invariants are equal we must have
\[ \frac{\Delta''_K(1)}{2} = \frac{\Delta''_{T_{2,2g+1}}(1)}{2}. \]
But Proposition~\ref{prop:casson-t2n} says that this is only possible if $\Delta_K(t) = \Delta_{T_{2,2g+1}}(t)$, as promised, and so the knot Floer homologies of $K$ and $T_{2,2g+1}$ coincide as well.

Supposing now that $K$ is not hyperbolic, it must be either a torus knot or a satellite knot \cite[Corollary~2.5]{thurston-kleinian}.  But if $K$ is an L-space knot with $\Delta_K(t) = \Delta_{T_{2,2g+1}}(t)$, then we know by Proposition~\ref{prop:satellite-t2n} that $K$ cannot be a satellite knot.  Thus it must be a torus knot, and in this case $\hfkhat(K) \cong \hfkhat(T_{2,2g+1})$ implies that $K = T_{2,2g+1}$ after all.
\end{proof}

This suffices to complete the proof of Theorem~\ref{thm:positive-trace-t2n}.

\begin{proof}[Proof of Theorem~\ref{thm:positive-trace-t2n}]
Suppose that $X_n(K) \cong X_n(T_{2,2g+1})$ but that $K$ is not isotopic to $T_{2,2g+1}$.  Then by Proposition~\ref{prop:lspace-trace-hyperbolic} we know that $K$ is a hyperbolic L-space knot of genus $g$, with $\hfkhat(K) \cong \hfkhat(T_{2,2g+1})$.  In particular, since $K$ is an L-space knot, it is fibered with right-veering monodromy.

Identifying the boundaries of the $n$-traces now gives us a homeomorphism
\[ S^3_n(K) \cong S^3_n(T_{2,2g+1}). \]
Since $K$ is a fibered, hyperbolic L-space knot, it has right-veering monodromy, and then Proposition~\ref{prop:compare-hyperbolic-surgeries} tells us that $0 < n < 4g$, as claimed.
\end{proof}

\appendix

\section{Non-characterizing slopes for torus knots} \label{sec:torus-pairs}

Here we describe a sequence of monic integer polynomials $a_k(n), b_k(n), c_k(n), d_k(n)$ and $p_k(n)$ that determine a 2-parameter family of pairs of torus knots with lens space surgeries in common.  By Proposition~\ref{prop:compare-torus-surgeries} the torus knots in each pair must have different genera, although we do not compute these genera directly.

\begin{theorem} \label{thm:torus-pairs}
For each integer $k \geq -1$ we define polynomials $a_k, b_k, c_k, d_k, p_k, q_k \in \Z[n]$ by setting 
\[ (a _{-1},b _{-1},c _{-1},d _{-1},p _{-1},q _{-1}) = (-1,\ 1,\ 1,\ 1,\ 0,\ n+1) \]
and then letting
\begin{align*}
(a _k,\ b _k,\ c _k,\ d _k) &= \left( d _{k-1},\ \frac{q _{k-1}-1}{d _{k-1}},\ b _{k-1},\ \frac{q _{k-1}+1}{b _{k-1}} \right), \\
(p _k,\ q _k) &= \left(q _{k-1},\ \frac{b _k^2 d _k^2 - 1}{q _{k-1}}\right)
\end{align*}
for all $k \geq 0$.  Then each of $a_k,b_k,c_k,d_k,p_k,q_k$ lies in $\Z[n]$, and for $k\geq 0$ they are monic of degrees $k$, $k+1$, $k$, $k+1$, $2k+1$, and $2k+3$ respectively.  Moreover, when $k\geq 1$ we have $a_k \neq c_k$ and $b_k \neq d_k$, and in these cases we have a homeomorphism
\[ S^3_{p_k(n)}(T_{a_k(n),b_k(n)}) \cong S^3_{p_k(n)}(T_{c_k(n),d_k(n)}) \]
as surgeries on nontrivial torus knots for all integers $n \geq 2$.
\end{theorem}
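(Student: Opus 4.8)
The strategy is to run a single induction on $k$ that simultaneously controls the algebra of the six sequences and records two auxiliary identities which drive the recursion, and then to read off the topology in one short final step. Concretely, I would prove by induction on $k \ge 0$ (with the conventions $q_{-1} = n+1$ and $b_{-1} = d_{-1} = 1$) that $a_k, b_k, c_k, d_k, p_k, q_k$ all lie in $\Z[n]$ and are monic of degrees $k, k+1, k, k+1, 2k+1, 2k+3$, and moreover that
\[
a_k b_k = q_{k-1} - 1, \qquad c_k d_k = q_{k-1} + 1, \qquad p_k q_k = b_k^2 d_k^2 - 1 .
\]
The base case $k=0$ is a direct check: $a_0 = c_0 = 1$, $b_0 = n$, $d_0 = n+2$, $p_0 = n+1$, and $q_0 = (n^2+2n-1)(n+1)$, and these satisfy all three identities.

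For the inductive step, assume the package at level $k-1$ with $k \ge 1$. The first point is that $b_k = (q_{k-1}-1)/d_{k-1}$ and $d_k = (q_{k-1}+1)/b_{k-1}$ are genuinely polynomials. Reducing $p_{k-1}q_{k-1} = b_{k-1}^2 d_{k-1}^2 - 1$ modulo $d_{k-1}$ and using $p_{k-1} = q_{k-2} = c_{k-1}d_{k-1} - 1 \equiv -1 \pmod{d_{k-1}}$ gives $q_{k-1} \equiv 1 \pmod{d_{k-1}}$; symmetrically, reducing modulo $b_{k-1}$ and using $q_{k-2} = a_{k-1}b_{k-1} + 1 \equiv 1 \pmod{b_{k-1}}$ gives $q_{k-1} \equiv -1 \pmod{b_{k-1}}$. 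Hence $b_k, d_k \in \Z[n]$, and since $a_k = d_{k-1}$ and $c_k = b_{k-1}$ the first two level-$k$ identities hold automatically. For $q_k = (b_k^2 d_k^2 - 1)/q_{k-1}$ I would combine $(a_kb_k)(c_kd_k) = q_{k-1}^2 - 1 \equiv -1 \pmod{q_{k-1}}$ with $a_k^2c_k^2 = (b_{k-1}d_{k-1})^2 \equiv 1 \pmod{q_{k-1}}$ (the level-$(k-1)$ identity $p_{k-1}q_{k-1} = b_{k-1}^2 d_{k-1}^2 - 1$) to get $b_kd_k \equiv -a_kc_k$, hence $b_k^2d_k^2 \equiv a_k^2c_k^2 \equiv 1 \pmod{q_{k-1}}$; this shows $q_k \in \Z[n]$ and gives the third identity. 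Monicity is immediate from the formulas, and the degrees follow by bookkeeping, e.g.\ $\deg b_k = \deg q_{k-1} - \deg d_{k-1} = (2k+1) - k = k+1$ and $\deg q_k = 2\deg b_k + 2\deg d_k - \deg q_{k-1} = 4(k+1) - (2k+1) = 2k+3$.

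Subtracting the first two identities of the package gives the clean relation $b_{k-1}d_k - d_{k-1}b_k = 2$ in $\Z[n]$. If $b_k = d_k$ then $b_k(b_{k-1} - d_{k-1}) = 2$ would force $b_k$ to be a nonzero constant, contradicting $\deg b_k = k+1 \ge 2$ for $k \ge 1$; so $b_k \ne d_k$, and then $a_k = d_{k-1} \ne b_{k-1} = c_k$ for $k \ge 2$ by the same inequality at level $k-1$, while for $k = 1$ this is direct since $d_0 = n+2 \ne n = b_0$. For the homeomorphism, the package gives $p_k = q_{k-1} = a_kb_k + 1 = c_kd_k - 1$, so the slope $p_k$ differs by $\pm1$ from the product of the parameters of each torus knot; Moser's classification of Dehn surgeries on torus knots then identifies $S^3_{p_k}(T_{a_k,b_k}) \cong L(p_k, b_k^2)$ and $S^3_{p_k}(T_{c_k,d_k}) \cong L(p_k, d_k^2)$, where some care with orientation conventions is needed. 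Finally $b_k^2 d_k^2 = p_k q_k + 1 \equiv 1 \pmod{p_k}$ gives $d_k^2 \equiv (b_k^2)^{-1} \pmod{p_k}$, so the classification of lens spaces ($L(r,s) \cong L(r, s^{-1})$) yields $L(p_k, d_k^2) \cong L(p_k, b_k^2)$, as desired.

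It remains to check that for every integer $n \ge 2$ the evaluations are genuine nontrivial torus knots. Coprimality is clean: refining $q_{k-1} \equiv 1 \pmod{d_{k-1}}$ to modulus $d_{k-1}^2$ via the level-$(k-1)$ identity yields $b_k \equiv c_{k-1} \pmod{a_k}$, and symmetrically $d_k \equiv a_{k-1} \pmod{c_k}$, so $\gcd(a_k, b_k) = \gcd(d_{k-1}, c_{k-1}) = \gcd(a_{k-2}, b_{k-2})$ and $\gcd(c_k, d_k) = \gcd(a_{k-1}, b_{k-1})$; iterating reduces every such gcd to the trivially coprime base cases $\gcd(a_0, b_0) = \gcd(1, n) = 1$ and $\gcd(a_1, b_1) = \gcd(n+2, n^2+n-1) = 1$. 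That $|a_k(n)|, |b_k(n)|, |c_k(n)|, |d_k(n)| \ge 2$ (and $p_k(n) > 1$, so the lens space is not $S^3$) for $n \ge 2$ follows from a short induction bounding each of the monic polynomials below on $[2,\infty)$. I expect the main obstacle to be purely organizational rather than conceptual: the topological input is essentially one line (Moser's formula, the congruence $b_k^2 d_k^2 \equiv 1 \pmod{p_k}$, and lens-space classification, with attention to orientations), so the real work — and the main place for a slip — is keeping the several modular reductions (mod $b_{k-1}$, mod $d_{k-1}$, mod $q_{k-1}$, and their refinements to moduli $b_{k-1}^2$ and $d_{k-1}^2$) consistent so that integrality, monicity, the degree count, and coprimality all propagate through the recursion at once.
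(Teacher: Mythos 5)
Your proposal is correct and follows essentially the same route as the paper: the same induction carrying the identities $a_kb_k+1=p_k=c_kd_k-1$ and $b_k^2d_k^2=p_kq_k+1$, the same congruence/divisibility arguments for integrality and degree bookkeeping, and the same final appeal to Moser's classification together with $b_k^2d_k^2\equiv 1 \pmod{p_k}$ and the lens space classification. You are in fact somewhat more careful than the published proof in checking that $\gcd(a_k(n),b_k(n))=\gcd(c_k(n),d_k(n))=1$ and that the knots are nontrivial for integers $n\geq 2$, points the paper leaves implicit.
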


\begin{remark}
Note that even though $S^3_{p_k}(T_{a_k,b_k}) \cong S^3_{p_k}(T_{c_k,d_k})$ in the theorem, Proposition \ref{prop:compare-torus-surgeries} says that these torus knots have different genera. Then Theorem \ref{thm:lspace-detection-main} implies that the corresponding traces are distinct, $X_{p_k}(T_{a_k,b_k}) \not\cong X_{p_k}(T_{c_k,d_k})$.
\end{remark}

\begin{remark}
The restriction $n\geq 2$ in Theorem~\ref{thm:torus-pairs} is only needed to ensure that each $T_{a_k,b_k}$ and $T_{c_k,d_k}$ is a nontrivial torus knot, since when $n=1$ we can check by induction that
\[ (a_k,b_k,c_k,d_k,p_k,q_k) = (2k+1,\ 1,\ 1,\ 2k+3,\ 2k+2,\ 2k+4) \]
for all $k \geq -1$, and then $b_k(1) = c_k(1) = 1$ implies that $T_{a_k(1),b_k(1)}$ and $T_{c_k(1),d_k(1)}$ are both unknots.  On the other hand, letting $F_\ell$ be the $\ell$th Fibonacci number (with $F_0=0$, $F_1=F_2=1$, and so on), one can show that if $n=2$ then
\begin{align*}
(a_k,b_k) &= (F_{2k}+F_{2k+2},\ F_{2k+3}) &
p_k &= F_{4k+4} \\
(c_k,d_k) &= (F_{2k+1},\ F_{2k+2}+F_{2k+4}) &
q_k &= F_{4k+8}
\end{align*}
for all $k\geq -1$; in this case we have $a_k(2),\ b_k(2),\ c_k(2),\ d_k(2) \geq 2$ for all $k\geq 1$.
\end{remark}

\begin{proof}[Proof of Theorem~\ref{thm:torus-pairs}]
A priori each of $a_k,b_k,c_k,d_k,p_k,q_k$ is only a rational function, but by construction these satisfy
\begin{equation} \label{eq:lens-slope-p_k}
a_k b_k + 1 = p_k = c_k d_k - 1
\end{equation}
and
\begin{equation} \label{eq:define-q_k}
b_k^2d_k^2 = p_k q_k + 1
\end{equation}
for all $k \geq 0$.  We compute directly that
\[ (a _0,b _0,c _0,d _0,p _0,q _0) = (1,\ n,\ 1,\ n+2,\ n+1,\ n^3+3n^2+n-1), \]
verifying the proposition for $k=0$.

We claim by induction that for $k\geq 0$, all of $a_k, b_k, c_k, d_k, p_k, q_k$ belong to $\Z[n]$ and have the claimed degrees with leading coefficient $1$.  By inspection this is clearly true for $k=0$.  For larger values of $k$, we note that $a_{k+1} = d_k$, $c_{k+1} = b_k$, and $p_{k+1} = q_k$ satisfy the claim by inductive hypothesis.  Then \eqref{eq:lens-slope-p_k} tells us that $b_k$ and $d_k$ are relatively prime to $p_k$ as elements of $\Z[n]$, so by combining \eqref{eq:lens-slope-p_k} and \eqref{eq:define-q_k} to get
\begin{align*}
p_k(q_k-1) &= (b_k^2d_k^2-1) - (c_kd_k-1) = d_k(b_k^2d_k-c_k) \\
p_k(q_k+1) &= (b_k^2d_k^2-1) + (a_kb_k+1) = b_k(b_kd_k^2+a_k),
\end{align*}
we see that $d_k$ divides $q_k-1$ and $b_k$ divides $q_k+1$.  This implies that $b_{k+1} = \frac{q_k-1}{d_k}$ and $d_{k+1} = \frac{q_k+1}{b_k}$ are in $\Z[n]$ as well, and it follows by induction that they each have degree $k+2 = (k+1)+1$ and leading coefficient $1$.  This leaves only $q_{k+1}$, for which we note that
\[ b_{k+1}^2 d_{k+1}^2 = \left(\frac{q_k-1}{d_k}\right)^2 \left(\frac{q_k+1}{b_k}\right)^2 \equiv (b_k^2d_k^2)^{-1} \pmod{q_k}, \]
and by \eqref{eq:define-q_k} the right side is $1\pmod{q_k}$, so that $q_{k+1} = \frac{b_{k+1}^2d_{k+1}^2 - 1}{q_k}$ is a polynomial as well, which must then also have leading coefficient $1$ and degree $4(k+2)-(2k+3) = 2(k+1)+3$.  This completes the induction.

We now establish the claim that $a_k \neq c_k$ and $b_k \neq d_k$ for any $k\geq 1$ by noting that \eqref{eq:lens-slope-p_k} implies that $c_kd_k - a_kb_k = 2$, from which $\gcd(a_k,c_k)$ and $\gcd(b_k,d_k)$ both divide $2$.  Since these are polynomials of strictly positive degree with no non-constant common divisors, we cannot have $a_k = c_k$ or $b_k = d_k$.

To conclude, we apply \cite{moser} to see that for each $k\geq 1$ and $n \geq 2$, we have
\[ S^3_{p_k}(T_{a_k,b_k}) \cong L(p_k,b_k^2) \cong L(p_k,d_k^2) \cong S^3_{p_k}(T_{c_k,d_k}). \]
The relation \eqref{eq:lens-slope-p_k} guarantees that these surgeries are the specified lens spaces, and these lens spaces are identified in the middle because \eqref{eq:define-q_k} implies that $b_k^2d_k^2 \equiv 1 \pmod{p_k}$.
\end{proof}

We compute the first few values of $a_k$ et al.~in terms of $n$ as follows:
\begingroup
\allowdisplaybreaks
\begin{align*}
(a_1,b_1) &= (n+2,\ n^2+n-1), \\*
(c_1,d_1) &= (n,\ n^2+3n+1), \\*
p_1 &= n^3+3n^2+n-1 \\[1em]
(a_2,b_2) &= (n^2+3n+1,\ n^3+2n^2-n-1), \\*
(c_2,d_2) &= (n^2+n-1,\ n^3+4n^2+3n-1), \\*
p_2 &= n^5+5n^4+6n^3-2n^2-4n \\[1em]
(a_3,b_3) &= (n^3+4n^2+3n-1,\ n^4+3n^3-3n), \\*
(c_3,d_3) &= (n^3+2n^2-n-1,\ n^4+5n^3+6n^2-n-2), \\*
p_3 &= n^7 + 7n^6 + 15n^5 + 5n^4 - 15n^3 - 9n^2 + 3n + 1 \\[1em]
(a_4,b_4) &= (n^4+5n^3+6n^2-n-2,\ n^5 + 4n^4 + 2n^3 - 5n^2 - 2n + 1), \\*
(c_4,d_4) &= (n^4+3n^3-3n,\ n^5 + 6n^4 + 10n^3 + n^2 - 6n - 1), \\*
p_4 &= n^9 + 9n^8 + 28n^7 + 28n^6 - 21n^5 - 49n^4 - 6n^3 + 18n^2 + 3n - 1 \\[1em]
(a_5,b_5) &= (n^5 + 6n^4 + 10n^3 + n^2 - 6n - 1,\ n^6 + 5n^5 + 5n^4 - 6n^3 - 7n^2 + 2n + 1) \\*
(c_5,d_5) &= (n^5 + 4n^4 + 2n^3 - 5n^2 - 2n + 1,\ n^6 + 7n^5 + 15n^4 + 6n^3 - 11n^2 - 6n + 1) \\*
p_5 &= n^{11} + 11n^{10} + 45n^9 + 75n^8 + 6n^7 - 126n^6 - 98n^5 + 50n^4 + 60n^3 - 4n^2 - 8n .
\end{align*}
\endgroup
The case $k=1$ is \cite[Example~1.1]{ni-zhang-torus-1}, up to a slight change of variables; setting $n=2$ there yields the relation $S^3_{21}(T_{4,5}) \cong S^3_{21}(T_{2,11})$.

\bibliographystyle{myalpha}
\bibliography{References}

\end{document}